\DeclareMathAlphabet{\mathscrbf}{OMS}{mdugm}{b}{n}
\definecolor{myorange}{RGB}{225,127,0}
\definecolor{mygreen}{RGB}{0,225,0}
\definecolor{mypurple}{RGB}{128,0,128}
\definecolor{myred}{RGB}{255,0,0}
\definecolor{myblue}{RGB}{0,0,195}
\definecolor{myyellow}{RGB}{210,210,0}
\definecolor{mycream}{RGB}{200,200,150}
\definecolor{dummy}{RGB}{100,100,100}
\definecolor{mygray}{gray}{0.7}
\definecolor{orchid}{RGB}{143,40,194}
\definecolor{lava}{RGB}{207,16,32}
\definecolor{mydarkblue}{RGB}{10,10,170}
\definecolor{sea}{RGB}{46,139,87}
\definecolor{tomato}{RGB}{255,99,71}
\tikzset{anchorbase/.style={baseline={([yshift=-0.5ex]current bounding box.center)}},
  tinynodes/.style={font=\tiny,text height=0.75ex,text depth=0.15ex},
  bstrand/.style={line width=1.5, color=myblue},
  rstrand/.style={line width=1.5, color=myred},
  ystrand/.style={line width=1.5, color=myyellow},
  dstrand/.style={line width=1.5, color=black},
  seastrand/.style={line width=1.5, color=sea},
  tomstrand/.style={line width=1.5, color=tomato},
}
\tikzstyle directed=[postaction={decorate,decoration={markings,
    mark=at position #1 with {\arrow[draw=black, line width=0.3mm]{>}}}}]
\tikzstyle rdirected=[postaction={decorate,decoration={markings,
    mark=at position #1 with {\arr1+ow[draw=black, line width=0.3mm]{<}}}}]
\tikzstyle ddirected=[postaction={decorate,decoration={markings,
    mark=at position #1 with {\draw[very thin,black,<->] (0,0) to (.2,0);}}}]
\tikzstyle{snakeline} = [decorate, decoration={pre length=0.2cm,
\newcommand{\setword}[2]{%
  \phantomsection
  #1\def\@currentlabel{\unexpanded{#1}}\label{#2}%
}
\newcommand{\placeholder}{\underline{\phantom{a}}}
\newcommand{\C}{\mathbb{C}}
\newcommand{\Z}{\mathbb{Z}}
\newcommand{\R}{\mathbb{R}}
\newcommand{\K}{\mathbbm{k}}
\newcommand{\N}{\mathbb{Z}_{\geq 0}}
\newcommand{\mstuff}[1]{\boldsymbol{#1}}
\newcommand{\varstuff}[1]{\mathsf{#1}}
\newcommand{\algstuff}[1]{\mathrm{#1}}
\newcommand{\obstuff}[1]{\mathtt{#1}}
\newcommand{\bvec}{\mathsf{C}}
\newcommand{\zigzag}{\algstuff{Z}_{\rightleftarrows}}
\newcommand{\bzigzag}{\algstuff{Z}_{\rightleftarrows}^{\bvec}}
\newcommand{\Gg}{\Gamma}
\newcommand{\Ggd}{\Gamma_{\rightleftarrows}}
\newcommand{\amatrix}{\mstuff{A}}
\newcommand{\imatrix}{\mstuff{I}}
\newcommand{\ematrix}{\mstuff{E}}
\newcommand{\cmatrix}{\mstuff{C}}
\newcommand{\ccmatrix}{\mstuff{c}}
\newcommand{\dmatrix}{\mstuff{D}}
\newcommand{\Rg}{\Theta}
\newcommand{\ii}[1][i]{\mathtt{#1}}
\newcommand{\uarrow}{\tikz[baseline=-2.5,scale=0.25]{\draw[-] (0,0) to (.75,0);}}
\newcommand{\marrow}{\tikz[baseline=-2.5,scale=0.25]{\draw[->] (0,0) to (.75,0);}}
\newcommand{\upathx}[2]{\obstuff{#1}\uarrow\obstuff{#2}}
\newcommand{\pathx}[2]{\obstuff{#1}\marrow\obstuff{#2}}
\newcommand{\pathxx}[3]{\obstuff{#1}\marrow\obstuff{#2}\marrow\obstuff{#3}}
\newcommand{\pathxxx}[4]{\obstuff{#1}\marrow\obstuff{#2}\marrow\obstuff{#3}\marrow\obstuff{#4}}
\newcommand{\pathxxxx}[5]{\obstuff{#1}\marrow\obstuff{#2}\marrow\obstuff{#3}\marrow\obstuff{#4}\marrow\obstuff{#5}}
\newcommand{\loopy}[1][s]{\alpha_{\mathtt{#1}}}
\newcommand{\volele}{\mathtt{x}}
\newcommand{\Hom}[1]{\algstuff{H}\mathrm{om}_{#1}}
\newcommand{\End}[1]{\algstuff{E}\mathrm{nd}_{#1}}
\newcommand{\typeADE}{\mathsf{A}\mathsf{D}\mathsf{E}}
\newcommand{\typeDE}{\mathsf{D}\mathsf{E}}
\newcommand{\typeA}[1][n]{\mathsf{A}_{#1}}
\newcommand{\atypeA}[1][n]{\widetilde{\mathsf{A}}_{#1}}
\newcommand{\typeD}[1][n]{\mathsf{D}_{#1}}
\newcommand{\atypeD}[1][n]{\widetilde{\mathsf{D}}_{#1}}
\newcommand{\typeE}[1][6]{\mathsf{E}_{#1}}
\newcommand{\atypeE}[1][6]{\widetilde{\mathsf{E}}_{#1}}
\newcommand{\qpar}{\varstuff{q}}
\newcommand{\qnumber}[1]{#1_{\qpar}}
\newcommand{\qdim}{\qpar\mathrm{dim}}
\newcommand{\lproj}[1][i]{\algstuff{P}_{\ii[#1]}}
\newcommand{\lprojb}[1][i]{\algstuff{P}_{\ii[#1]}^{\bvec}}
\newcommand{\rproj}[1][i]{{}_{\ii[#1]}\algstuff{P}}
\newcommand{\lsimple}[1][i]{\algstuff{L}_{\ii[#1]}}
\newcommand{\rsimple}[1][i]{{}_{\ii[#1]}\algstuff{L}}
\newcommand{\ldmod}[1][i]{\Delta_{#1}}
\newcommand{\calg}{\algstuff{R}}
\newcommand{\cset}{\obstuff{X}}
\newcommand{\cmset}{\obstuff{M}}
\newcommand{\ciset}{\obstuff{E}}
\newcommand{\coset}{\obstuff{O}}
\newcommand{\cbasis}{\obstuff{C}}
\newcommand{\ceps}{\boldsymbol{\varepsilon}}
\newcommand{\ord}[1]{<_{#1}}
\newcommand{\Ord}[1]{\leq_{#1}}
\newcommand{\invo}{\star}
\newcommand{\cbas}[2]{\mathtt{C}_{#1}^{#2}}
\newcommand{\qhalg}{\algstuff{R}}
\newcommand{\qhideal}{\algstuff{J}}
\newcommand{\kalg}{\algstuff{R}}
\newcommand{\xpar}{\varstuff{X}}
\newcommand{\xmpar}{\mstuff{X}}
\newcommand{\sshift}{s}
\newcommand{\ympar}{\mstuff{Y}}
\theoremstyle{definition}
\newtheorem{theoremm}{Theorem}[section]
\declaretheorem[style=definition,name=Theorem,qed=$\square$,numberlike=theoremm]{theorem}
\declaretheorem[style=definition,name=Lemma,qed=$\square$,numberlike=theoremm]{lemma}
\declaretheorem[style=definition,name=Lemma,qed=$\blacksquare$,numberlike=theoremm]{lemmaqed}
\declaretheorem[style=definition,name=Proposition,qed=$\square$,numberlike=theoremm]{proposition}
\declaretheorem[style=definition,name=Proposition,qed=$\blacksquare$,numberlike=theoremm]{propositionqed}
\declaretheorem[style=definition,name=Example,qed=$\blacktriangle$,numberlike=theorem]{example}
\declaretheorem[style=definition,name=Definition,qed=$\blacktriangle$,numberlike=theorem]{definition}
\declaretheorem[style=definition,name=Remark,qed=$\blacktriangle$,numberlike=theorem]{remark}
\newtheorem{theoremmain}{Theorem}
\newtheorem{theoremmainb}{Theorem}
\def\notation#1#2#3{\rlap{\hyperref[#1]{{\color{orchid}#2}}}\hspace*{8.2mm} \hbox to 47mm{#3\hfill}}
\numberwithin{equation}{section}
\let\fullref\autoref
\def\makeautorefname#1#2{\expandafter\def\csname#1autorefname\endcsname{#2}}
\begin{document}
\vbadness=10001
\hbadness=10001
\title[Algebraic properties of zigzag algebras]{Algebraic properties of zigzag algebras}
\author[M. Ehrig and D. Tubbenhauer]{Michael Ehrig and Daniel Tubbenhauer}

\address{M.E.: Beijing Institute of Technology, School of Mathematics and Statistics, Liangxiang Campus of Beijing Institute of Technology, Fangshan District, 100488 Beijing, China}
\email{micha.ehrig@outlook.com}

\address{D.T.: Institut f\"ur Mathematik, Universit\"at Z\"urich, Winterthurerstrasse 190, Campus Irchel, Office Y27J32, CH-8057 Z\"urich, Switzerland, \href{www.dtubbenhauer.com}{www.dtubbenhauer.com}}
\email{daniel.tubbenhauer@math.uzh.ch}

\begin{abstract}
We give necessary and 
sufficient conditions for zigzag algebras 
and certain generalizations of them to be 
(relative) cellular, quasi-hereditary or Koszul.
\end{abstract}

\maketitle

\vspace*{-1cm}

\tableofcontents

\vspace*{-1cm}

\renewcommand{\theequation}{\thesection-\arabic{equation}}
\section{Introduction}\label{section:intro}

Throughout, we let $\Gg$ denote 
a finite, connected, simple graph, and we work over 
an arbitrary field $\K$.
Let $\zigzag=\zigzag(\Gg)$ be the zigzag algebra 
associated to $\Gg$, and let further $\bzigzag=\bzigzag(\Gg)$ be the 
zigzag algebra with a vertex-loop condition (vertex condition for short)
at some fixed set of vertices $\bvec\neq\emptyset$.

\subsubsection*{The main statements}\label{subsubsec:main}
Zigzag algebras are around for many years, see 
e.g. \cite{w-zigzag} for an early reference 
from classical algebra. Further,
as shown in e.g. \cite{hk-zigzag}, 
they appear in various places 
in modern mathematics. For example, for a viewpoint from 
categorical braid group actions see \cite{ks-quivers} 
and \cite{kms-brief-cat}, for one from symplectic 
geometry see \cite{el-zigzag-symplectic}, for one 
from KLR algebras, associated representations 
and blocks of symmetric groups see 
\cite{km-affine-zigzag} and \cite{ek-zigzag-symmetric}, 
for one from geometric group theory see \cite{li-freebraid}, 
and for one from Soergel bimodules and $2$-representation 
theory see \cite{mt-soergel}. Similarly, the algebra 
$\bzigzag$ comes from considerations in
modular representation theory or representation theory 
at roots of unity, see e.g. \cite{at-tilting}, 
\cite{qs-cat-burau}, or versions of category $\mathcal{O}$, see 
e.g. \cite{st-endo-cat-o}, \cite{ck-arc} or \cite{bs-arc4}.

The purpose of this 
paper is to show the following algebraic 
properties of $\zigzag$ and $\bzigzag$.

\begin{theoremmain}\label{theorem:cell}
$\zigzag$ is cellular if and only if
$\Gg$ is a finite type $\typeA[]$ graph.
$\zigzag$ is relative cellular if and only if
$\Gg$ is a finite or affine type $\typeA[]$ graph.
\end{theoremmain}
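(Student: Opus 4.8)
The plan is to treat the four implications in two groups: the ``if'' parts, where one must exhibit a (relative) cellular structure, and the ``only if'' parts, where one must extract an obstruction. For the ``if'' parts the unifying device is to realise $\zigzag$ as an \emph{iterated inflation} along a chain of two-sided ideals, using the anti-involution $\invo$ that fixes the vertex idempotents, reverses each arrow, and fixes the length-two loops $\loopy[i]$. For $\Gg$ of type $\typeA$ with vertices $1,\dots,n$ in their linear order I would build the chain by first splitting off the one-dimensional cell ideal $\K\loopy[1]$ at an end vertex (cell module one-dimensional, zero bilinear form), then splitting off the four-dimensional ideal $\zigzag e_{\ii[1]}\zigzag$ in the quotient: one checks directly, using only the two defining families of relations (paths of length $\ge 3$ vanish; the loop identifications), that this is the inflation of a two-dimensional cell module carrying a rank-one form, which recovers the simple at vertex $1$. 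The quotient by these two layers is again of the same shape, namely $\zigzag$ of the path on $2,\dots,n$ but with the loop at the new end vertex $2$ killed (it was absorbed into the four-dimensional block), so one iterates; the resulting cell chain has layers of $\K$-dimension $1,4,\dots,4,1$, the squares of the cell-module dimensions sum to $\dim\zigzag=4n-2$, and the forms are nonzero on exactly $n$ layers. For $\Gg=\atypeA$ this strategy is blocked because a cycle has no end vertex to start from; here I would pass to the \emph{relative} framework over the local Frobenius base $\kalg=\K[\loopy[i]]/(\loopy[i]^2)$, in which the loops are absorbed into the ground ring so that the cyclic ``monodromy'' of the doubled quiver is no longer an obstruction, the relative cell layers being matrix-type algebras over $\kalg$ attached to the vertices; the same kind of direct verification with the relations then applies.

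For the ``only if'' parts, the starting observation is that in any (relative) cell chain each layer is an inflation-type two-sided ideal $V\otimes_{\K}V$ (respectively $V\otimes_{\kalg}V^{\circ}$), hence $\invo$-stable, of perfect-square $\K$-dimension (respectively of the ``relative'' dimension $2a^2+2ab+b^2$ coming from a decomposition $V\cong\kalg^{a}\oplus(\kalg/\loopy)^{b}$), with $\invo$ acting as the transposition so that its fixed subspace is a (relative) $\mathrm{Sym}^2 V$. Since $e_{\ii}\zigzag e_{\ii[j]}$ is at most one-dimensional for $\ii\neq\ii[j]$ and $\invo$ is fixed-point-free on the span of the arrows of the doubled quiver (it interchanges an arrow with its reverse and $\Gg$ is simple), an $\invo$-stable two-sided ideal is always spanned by a set of vertex idempotents, a set of loops, and a set of full arrow-pairs; comparing the dimensions of its $\invo$-fixed part, its $\invo$-anti-fixed part, and its left- and right-$e$-fixed parts for an idempotent $e$ in it with the corresponding numbers for $V\otimes_{?}V$ pins down very tightly which ideals can be inflation-type. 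The outcome I expect is that the only ``vertex blocks'' $\zigzag e_{\ii}\zigzag$ which can occur as layers attach to vertices of valency $\le 1$ (in the ordinary case) or $\le 2$ (in the relative case); in particular a valency-$\ge 3$ vertex admits no such block, and --- since peeling pendant vertices can never remove a valency-$\ge 3$ vertex or break a cycle --- any cell chain for $\zigzag(\Gg)$ with $\Gg$ of another type must eventually arrive at a radical-square-zero algebra on the doubled quiver of a graph that still has an edge, and such an algebra has no cell ideal at all (its idempotent blocks are the wrong size, and its arrows carry a free $\invo$-action, while a square-zero $\invo$-stable ideal inside the arrow span would need a positive-dimensional fixed subspace). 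As an auxiliary numerical check one can also use that $\det(2\imatrix+\amatrix_{\Gg})\ge 0$ is necessary for cellularity, where $\amatrix_{\Gg}$ is the adjacency matrix, which already disposes of the graphs with an adjacency eigenvalue $<-2$.

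The hard part will be this last step: making the ``eventually arrives at a bad radical-square-zero algebra'' argument rigorous for \emph{every} cell chain, not merely the obvious greedy one. This requires a reasonably complete description of the two-sided ideals of $\zigzag(\Gg)$ and of its quotients by cell ideals, together with the classification sketched above of which of them can be inflation-type in the ordinary and in the relative sense, so that all possible orders of peeling are controlled. I would organise it as an induction on $|\Gg|$: first show that pendant vertices can always be split off (exactly as in the type $\typeA$ computation), reducing to the $2$-connected case; then observe that a $2$-connected graph which is neither a single edge nor a cycle contains a valency-$\ge 3$ vertex, whose block can never be made inflation-type, forcing the chain into a radical-square-zero dead end. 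The split between ``cellular'' and ``relative cellular'' for the cycle should then fall out of the same bookkeeping, since passing to the base $\K[\loopy[i]]/(\loopy[i]^2)$ is precisely what converts the forbidden size of a valency-$2$ block into an allowed relative one, while doing nothing for valency $\ge 3$.
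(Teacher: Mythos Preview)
Your proposal contains a genuine gap: you are working with the wrong notion of ``relative cellular''. In this paper the term is used in the sense of \cite{et-relcell} (see Definition~3.1), where a relative cell datum consists of a family $\ciset$ of pairwise orthogonal $\invo$-fixed idempotents, each carrying its \emph{own} partial order $\ord{\ceps}$ on the same indexing set $\cset$, together with compatibility conditions \eqref{eq:idem-props-1}--\eqref{eq:idem-props-2}. It is \emph{not} a change of ground ring to a local Frobenius base such as $\K[\loopy]/(\loopy^{2})$. Your construction for $\atypeA$ (absorbing the loops into the base so as to defuse the cyclic monodromy) therefore does not produce a relative cell datum in the required sense, and your elimination argument in the relative case (``passing to the base converts the forbidden valency-$2$ block into an allowed relative one'') is not addressing the correct structure either. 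The paper's actual construction for $\atypeA$ uses two idempotents $\ciset=\{\ii[1]+\dots+\ii[n{-}1],\,\ii[n]\}$ with \emph{opposite} orderings, and the elimination on the relative side proceeds via the Cartan-matrix reciprocity $\cmatrix=\dmatrix^{\mathrm{T}}\dmatrix$ from \cite[Theorem~3.23]{et-relcell} together with idempotent truncation to a type $\typeD[4]$ subgraph.

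For the ordinary cellular statement your strategy is different from the paper's but not obviously wrong; however, it is also not complete. The paper avoids any direct classification of inflation-type ideals: it uses (i) positive definiteness of $\cmatrix=2\imatrix+\amatrix$ to eliminate bipartite non-$\typeADE$ graphs via spectral graph theory, (ii) idempotent truncation to an odd cycle to eliminate all non-bipartite graphs, and (iii) an explicit $\cmatrix=\dmatrix^{\mathrm{T}}\dmatrix$ analysis (with the triangularity constraint on $\dmatrix$) to kill the remaining $\typeDE$ and $\atypeA$ cases. Your plan instead tries to control \emph{every} possible cell chain by showing that only pendant-vertex blocks can occur as layers and then arguing that the peeling must terminate in a radical-square-zero algebra with no cell ideal. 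You yourself flag this as the hard part, and indeed it is: one would need a complete description of the $\invo$-stable two-sided ideals not only in $\zigzag(\Gg)$ but in all of its successive cell quotients, and to rule out non-obvious cell ideals (e.g.\ ones not of the form $\zigzag e_{\ii}\zigzag$). The paper's Cartan-matrix route bypasses this entirely.
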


For us e.g. not being cellular 
always means that there is no choice of a cell datum.
Further, in all cases where $\zigzag$ is 
(relative) cellular, the path length
endows it with the structure of a graded 
(relative) cellular algebra.

\begin{theoremmain}\label{theorem:qh}
$\zigzag$ is never quasi-hereditary.
\end{theoremmain}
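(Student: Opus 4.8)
The plan is to derive the statement from the incompatibility of two facts: $\zigzag$ is a non-semisimple self-injective algebra, while every quasi-hereditary algebra has finite global dimension.

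First I would record that $\zigzag$ is a symmetric algebra, hence in particular self-injective. This is classical (see \cite{hk-zigzag}) and also falls out of the structure theory of $\zigzag$: the $\K$-linear trace sending each vertex loop to $1$ and every element of strictly smaller path length to $0$ is a non-degenerate, symmetric, associative form on $\zigzag$. The algebra $\zigzag$ is moreover not semisimple: $\mathrm{rad}\,\zigzag$ is the span of all elements of strictly positive path length and $\mathrm{rad}^3\,\zigzag=0$, so $\mathrm{rad}\,\zigzag\neq 0$ because $\Gg$, being connected with at least two vertices, has an edge and hence a non-zero arrow. (When $\Gg$ is a single vertex one has $\zigzag\cong\K$, a case excluded by our conventions.)

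Next I would show $\mathrm{gldim}\,\zigzag=\infty$, via the standard observation that a self-injective algebra of finite global dimension is semisimple. Indeed, if some module $M$ had $\mathrm{pd}_{\zigzag}(M)=k\ge 1$, then $N:=\Omega^{k-1}M$ would be non-projective while its first syzygy $\Omega N=\Omega^{k}M$ is projective; but over a self-injective algebra $\Omega N$ is then also injective, so the short exact sequence $0\to\Omega N\to P\to N\to 0$ with $P$ projective splits, forcing $N$ to be projective --- a contradiction. Hence every $\zigzag$-module is projective, i.e.\ $\zigzag$ is semisimple, contradicting the previous paragraph.

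Finally, recalling the classical fact that a quasi-hereditary algebra has finite global dimension, the above shows $\zigzag$ is not quasi-hereditary; since the path-length grading only enriches the structure, $\zigzag$ is not graded quasi-hereditary either. I do not expect a genuine obstacle here: the argument is soft, and the only points requiring attention are having the (symmetric, hence self-injective) structure of $\zigzag$ on record and disposing of the trivial one-vertex case. As an aside, one could argue more concretely instead: the Cartan matrix of $\zigzag$ equals $2\imatrix+\amatrix(\Gg)$, with $\amatrix(\Gg)$ the adjacency matrix of $\Gg$, and one checks that its determinant $\prod_{\theta}(2+\theta)$ --- the product over the eigenvalues $\theta$ of $\amatrix(\Gg)$ --- is never $1$, whereas a quasi-hereditary algebra has Cartan determinant $1$; but verifying the former for all connected simple graphs is fiddlier than the global-dimension route.
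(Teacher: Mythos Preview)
Your argument is correct and takes a genuinely different route from the paper. The paper argues directly from the definition: any nonzero idempotent two-sided ideal $\qhideal_{1}$ in $\zigzag$ must contain some vertex idempotent $\ii$, and then $\qhideal_{1}\,\mathrm{Rad}(\zigzag)\,\qhideal_{1}$ contains the volume element $\volele_{\ii}\neq 0$, violating the hereditary condition. You instead combine the Frobenius/symmetric structure of $\zigzag$ (the paper's \fullref{lemma:basics2}) with the classical fact that quasi-hereditary algebras have finite global dimension, and the equally classical fact that a non-semisimple self-injective algebra has infinite global dimension. Your approach is more conceptual and reusable (it immediately applies to any non-semisimple symmetric algebra), while the paper's is a one-line computation tailored to $\zigzag$ that avoids importing the global-dimension theorem.

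Two small corrections. First, the single-vertex case is \emph{not} excluded by the paper's conventions, and in that case $\zigzag\cong\K[\volele]/(\volele^{2})$, not $\K$; fortunately this is still non-semisimple and symmetric, so your argument covers it without change. Second, your aside about the Cartan determinant is actually false: for $\Gg$ of type $\typeE[8]$ one has $\det(\cmatrix)=1$ (this appears explicitly in the paper's table \eqref{eq:the-data}), so that alternative route genuinely fails for at least one graph. Your instinct to prefer the global-dimension argument was therefore well founded; I would simply delete the Cartan-determinant remark.
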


\begin{theoremmain}\label{theorem:koszul}
$\zigzag$ is Koszul if and only if $\Gg$ is not a 
type $\typeADE$ graph.
\end{theoremmain}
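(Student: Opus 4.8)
The plan is to reduce the question to Koszulity of the preprojective algebra $\Pi(\Gg)$ of $\Gg$, for which the $\typeADE$ dichotomy is classical. First I would record the shape of $\zigzag$: it is supported in degrees $0,1,2$, with degree $0$ spanned by the vertex idempotents, degree $1$ by the arrows of the doubled quiver $\Ggd$, and degree $2$ by the length-two loops $\theta_{i}$ ($i$ a vertex; all nonzero since $\Gg$ is connected), and its defining relations are the quadratic ones --- length-two paths through two distinct vertices vanish, and the loops at a vertex obtained through different neighbours are identified --- together with the cubic relations $\theta_{i}\cdot a=0$, one per arrow $a$ out of $i$. A short rewriting shows that $\theta_{i}\cdot a=0$ already follows from the quadratic relations whenever the underlying edge of $a$ has an endpoint of valency $\ge 2$: if $\mathrm{val}(i)\ge 2$, replace $\theta_{i}$ by the loop through a second neighbour and use a distinct-vertex relation; if $a\colon i\to j$ with $\mathrm{val}(j)\ge 2$, rewrite $\theta_{i}a=(i|j)\theta_{j}$ and argue symmetrically. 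For connected $\Gg$ this covers every edge except when $\Gg=\typeA[2]$, where $\zigzag$ genuinely carries the irredundant cubic relation $aa^{*}a=0$ and so is not quadratic, hence not Koszul --- consistent with $\typeA[2]\in\typeADE$ (the one-vertex graph being excluded as degenerate). From here on assume $\Gg$ connected with at least three vertices, so $\zigzag$ is quadratic, with matrix Hilbert series $P(t)=(1+t^{2})\imatrix+t\amatrix$, $\amatrix=\amatrix_{\Gg}$ the adjacency matrix.

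Next I would compute the quadratic dual. Inside the degree-two component of the path algebra of $\Ggd^{\mathrm{op}}\cong\Ggd$, the annihilator of the span of the quadratic relations of $\zigzag$ is spanned by the elements $\sum_{j\sim i}(i|j)(j|i)$, one per vertex $i$; hence $\zigzag^{!}$ is the path algebra of $\Ggd$ modulo the ideal they generate, which is the preprojective algebra $\Pi(\Gg)$ of $\Gg$ (in a suitable sign convention). Since a quadratic algebra is Koszul exactly when its quadratic dual is, $\zigzag$ is Koszul if and only if $\Pi(\Gg)$ is.

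Now the dichotomy. If $\Gg\notin\typeADE$, then $\Pi(\Gg)$ is Koszul --- this is classical; concretely the minimal graded projective resolutions of the simple $\Pi(\Gg)$-modules are linear, and it is exactly here that the non-Dynkin hypothesis is used (for Dynkin $\Gg$ these resolutions are eventually periodic, the ``almost Koszul'' phenomenon of Brenner--Butler--King, hence not linear) --- so $\zigzag=\Pi(\Gg)^{!}$ is Koszul. For the converse I would argue directly with Hilbert series. If $\Gg\in\typeADE$ and $\zigzag$ were Koszul, Koszul duality would force $P_{\zigzag^{!}}(t)=P(-t)^{-1}=\big((1+t^{2})\imatrix-t\amatrix\big)^{-1}$, so every entry of $\big((1+t^{2})\imatrix-t\amatrix\big)^{-1}$ would lie in $\N[[t]]$. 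But for Dynkin $\Gg$ the eigenvalues of $\amatrix$ are $2\cos(\pi m/h)$ ($m$ an exponent, $h$ the Coxeter number), hence lie in $(-2,2)$ and are cosines of rational multiples of $\pi$; diagonalising $\amatrix$, each diagonal entry of $\big((1+t^{2})\imatrix-t\amatrix\big)^{-1}$ is a nonnegative combination of power series $\sum_{n\ge 0}\tfrac{\sin((n+1)\phi)}{\sin\phi}\,t^{n}$ with $\phi\in(0,\pi)$, $\phi/\pi\in\Q$, so its coefficient sequence is periodic in $n$ with Ces\`aro mean $0$, while its constant term is $1$. A nonnegative periodic sequence with Ces\`aro mean $0$ vanishes identically --- a contradiction. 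Hence $\zigzag$ is not Koszul when $\Gg\in\typeADE$, and the theorem follows.

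The one genuinely nontrivial ingredient is the Koszulity of $\Pi(\Gg)$ for non-Dynkin $\Gg$; a self-contained treatment would exhibit the linear resolutions above (equivalently, the linear $\Pi(\Gg)$-bimodule resolution) and verify exactness, which is precisely where non-Dynkin-ness enters. The remaining points --- pinning down the sign/orientation convention relating $\zigzag^{!}$ to the usual preprojective algebra, and the small graphs $\typeA[1]$ and $\typeA[2]$ --- are routine.
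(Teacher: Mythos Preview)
Your argument is correct and takes a route genuinely different from the paper's. For the non-$\typeADE$ direction you reduce to Koszulity of the preprojective algebra via quadratic duality and cite the classical result; the paper instead builds the minimal linear projective resolutions of the simple $\zigzag$-modules directly, showing that the multiplicity vector at step $t$ is the $\ii^{\mathrm{th}}$ column of the Chebyshev evaluation $U_t(\amatrix)$, and then uses Perron--Frobenius theory (the spectral radius of $\amatrix$ is $\geq 2$ exactly outside $\typeADE$, while the roots of $U_t$ lie in $(-2,2)$) to see that this column is nonzero and nonnegative. Your reduction is shorter but black-boxes the hard step; the paper's approach is self-contained, yields explicit resolutions with combinatorial multiplicities, and comes with an algorithm producing the differentials. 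For the $\typeADE$ direction your periodicity and Ces\`aro-mean argument on the diagonal of $\big((1+t^{2})\imatrix-t\amatrix\big)^{-1}$ is correct and more uniform than the paper's, which computes $\det(\cmatrix_{\qpar})^{-1}$ type by type and locates gaps of size exceeding $2n-2$ in the resulting power series (with an extra cofactor divisibility argument for $\typeD[2m]$).

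One caveat deserves more than the word ``routine''. The dual relations you compute are $\sum_{j\sim i}(i|j)(j|i)=0$ with all signs $+1$, whereas the standard preprojective relations carry signs determined by an orientation; when $\Gg$ has an odd cycle these cannot be matched by rescaling arrows unless $-1$ is a square in $\K$. The paper works over an arbitrary field and flags exactly this, noting that the preprojective route in the literature covers bipartite $\Gg$ directly (with a separate reference handling graphs containing a cycle). So for non-bipartite $\Gg$ over general $\K$ you still owe either a check that the Koszulity proofs for $\Pi$ transfer to the all-plus variant, or an independent argument --- likely straightforward, but not literally in the sources you invoke.
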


Additionally, we give an algorithmic construction for the minimal linear projective resolutions of simple 
$\zigzag$-modules.

Using the same ideas 
as for $\zigzag$ we can also prove:

\begin{theoremmainb}\label{theorem:cellb}
$\bzigzag$ is cellular if and only if
$\Gg$ is a finite type $\typeA[]$ graph and the 
vertex condition is imposed on one leaf.
$\bzigzag$ is relative cellular in exactly the same cases.
\end{theoremmainb}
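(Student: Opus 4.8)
To prove Theorem~\ref{theorem:cellb} the plan is to rerun the arguments behind Theorem~\ref{theorem:cell}, tracking how the vertex condition deforms the idempotent truncations $e_s\bzigzag e_s$ for $s\in\bvec$. Like $\zigzag$, the algebra $\bzigzag$ is positively graded with $(\bzigzag)_0$ the span of the pairwise orthogonal primitive vertex idempotents; as in Theorem~\ref{theorem:cell} this lets one take the anti-involution of any (relative) cell datum to fix every primitive idempotent (after conjugating if necessary) and to reverse arrows, fixing the extra loop generators. This rigidity is the starting point, exactly as for $\zigzag$, and is unaffected by the vertex condition.

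\emph{Sufficiency.} Let $\Gg=\typeA[n]$ be a path and $\bvec=\{\ell\}$ with $\ell$ a leaf. I would take the explicit graded cell datum of $\zigzag(\typeA[n])$ built in the proof of Theorem~\ref{theorem:cell} --- its poset $(\cset,\Order)$, cell modules, and monomial basis $\cbasis$ --- and modify it only in the part supported at $\ell$: the vertex condition enlarges $e_\ell\bzigzag e_\ell$ in a single extra direction, so one adjoins the finitely many extra basis monomials it contributes at $\ell$ together with their $\invo$-images, leaving every other cell alone. Verifying the cellular axioms --- the symmetry $\invo\bigl(\cbas{\lambda}{(S,T)}\bigr)=\cbas{\lambda}{(T,S)}$ of the basis, the factorisation rule $a\,\cbas{\lambda}{(S,T)}\equiv\sum_{S'}r_a(S',S)\,\cbas{\lambda}{(S',T)}$ modulo lower cells, and compatibility with the path-length grading --- is then the same bookkeeping as for $\zigzag(\typeA[n])$, now with one decorated end. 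As cellular implies relative cellular, this also yields the positive half of the relative statement.

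\emph{Necessity.} Suppose $\bzigzag(\Gg)$ is relative cellular. The obstruction used in Theorem~\ref{theorem:cell} to exclude every graph outside finite or affine type $\typeA[]$ lives at a branch vertex and its three neighbours (every connected graph that is neither a path nor a cycle has such a vertex), a configuration the vertex condition leaves untouched, and only worsens if $\bvec$ meets it; hence it persists in $\bzigzag$ and forces $\Gg$ to be of finite or affine type $\typeA[]$. An affine type $\typeA[]$ graph is a cycle and has no leaf, so the condition cannot be imposed on a leaf there; instead one shows directly that $\bzigzag(\atypeA)$ is never relative cellular, the point being that the extra loop at a cycle vertex $s$ makes $s$ together with its two neighbours behave like a branch vertex, re-triggering the obstruction that already kills $\zigzag(\typeD[4])$. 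So $\Gg=\typeA[n]$, and it remains to pin down $\bvec$. If $\bvec$ contained an interior vertex $i$ of the path, the enlarged $e_i\bzigzag e_i$ gives $i$ three independent directions --- towards its two neighbours and around the extra loop --- which is again the branch obstruction. If $\bvec$ contained two vertices, necessarily the two leaves, the two symmetric decorations produce two $\invo$-stable maximal two-sided ideals that cannot be placed comparably in any candidate poset $(\cset,\Order)$, contradicting (relative) cellularity. Hence $\bvec=\{\ell\}$ with $\ell$ a leaf, and the cellular and relative cellular classes coincide.

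\emph{Main obstacle.} The delicate part is the necessity argument above: excluding an interior decorated vertex, excluding two decorated vertices, and ruling out the affine case are all non-existence statements about cell data, so none can be settled by testing a single candidate. As in Theorem~\ref{theorem:cell}, the route is to isolate an intrinsic invariant --- how the graded Cartan matrix and decomposition numbers of $\bzigzag$ must factor through the forced anti-involution $\invo$ --- and to show that the deformed local algebras $e_s\bzigzag e_s$ violate it. Carrying that argument over to $\bzigzag$, rather than the essentially mechanical construction of the sufficiency part, is where the real work lies.
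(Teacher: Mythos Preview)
Your proposal rests on a fundamental misreading of the definition of $\bzigzag$. The algebra $\bzigzag$ is the \emph{quotient} of $\zigzag$ obtained by killing the volume elements $\volele_{\ii[c]}$ for $\ii[c]\in\bvec$ (Definition~\ref{definition:bzigzag}). Consequently the idempotent truncation $e_c\bzigzag e_c$ at a vertex $c\in\bvec$ is \emph{smaller} than $e_c\zigzag e_c$: it is one-dimensional, spanned by the idempotent alone, with no volume element. There is no ``extra loop'' at $c$; the loop has been removed. This misunderstanding propagates through your entire argument.

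In the sufficiency direction, the cell datum for $\bzigzag(\typeA[n])$ with $\bvec=\{\ell\}$ is obtained from that of $\zigzag(\typeA[n])$ by \emph{deleting} the dummy cell $\cmset(\ii[0])$ (whose sole purpose was to house the leaf's volume element), not by adjoining new basis monomials. In the necessity direction, your mechanism for excluding an interior vertex condition, two leaf conditions, or the affine case --- that an ``extra loop'' makes a vertex behave as if trivalent --- simply does not exist; the modified vertex has \emph{fewer} endomorphisms, not more, so no $\typeD[4]$-style branch obstruction is created locally. The paper's actual route is numerical: it computes $\det(\cmatrix^{\bvec})$ via the recursion of Lemma~\ref{lemma:the-cartan-cell2} against the tabulated values in \eqref{eq:the-data}, and finds it strictly negative in almost every forbidden case, contradicting Lemma~\ref{lemma:the-cartan-b}. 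The few borderline cases with non-negative determinant (e.g.\ $\typeA[3]$ with $\bvec=\{\ii[2]\}$, $\typeD$ with $\bvec=\{\ii[1]\}$, $\atypeA[2]$, $\typeA[2]$ with $\bvec=\{\ii[1],\ii[2]\}$) are then eliminated by writing out the reciprocity equation $\cmatrix^{\bvec}=\dmatrix^{\mathrm{T}}\dmatrix$ explicitly and either finding no non-negative integer solution or showing that the forced cell modules cannot filter the indecomposable projectives; idempotent truncation then propagates these exclusions to all larger graphs.
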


\begin{theoremmainb}\label{theorem:qhb}
$\bzigzag$ is quasi-hereditary if and only 
if $\Gg$ is a finite type $\typeA[]$ graph and the 
vertex condition is imposed on one leaf.
\end{theoremmainb}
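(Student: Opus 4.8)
The plan is to determine, from the explicit description of $\bzigzag$, exactly which two-sided ideals are heredity ideals, and then to prove both implications by induction on $|V(\Gg)|$, using the recursive characterization of quasi-heredity: an algebra $A$ is quasi-hereditary iff it is semisimple or there is a heredity ideal $J\trianglelefteq A$ with $A/J$ quasi-hereditary.

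The heart of the matter is a structural claim about heredity ideals. Since $\bzigzag$ is basic, any heredity ideal is $\bzigzag\epsilon\bzigzag$ with $\epsilon=\sum_{i\in S}e_i$ for a set $S$ of vertices, and $\epsilon\,\mathrm{rad}(\bzigzag)\,\epsilon=0$ forces $S$ to be an independent set contained in $\bvec$ — this is exactly where the vertex condition enters, since it is what makes $e_i\bzigzag e_i=\K$. For such $S$ one computes, as left $\bzigzag$-modules, $\bzigzag e_i\bzigzag\cong P_i\oplus\bigoplus_{j\sim i}\bzigzag\,(i\!\to\!j)$, where the summand $\bzigzag\,(i\!\to\!j)$ has simple top $L_i$ and is $2$-dimensional when $j\notin\bvec$ but $1$-dimensional when $j\in\bvec$; hence it is isomorphic to $P_i$ precisely when $\deg_{\Gg}(i)=1$ and the unique neighbour of $i$ avoids $\bvec$. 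Running over $i\in S$, and observing that $\bzigzag e_i\bzigzag$ and $\bzigzag e_{i'}\bzigzag$ overlap (in a shared copy of $x_j$) exactly when $i,i'$ have a common neighbour $j$, one concludes: $\bzigzag\epsilon\bzigzag$ is a heredity ideal iff $S$ is a nonempty set of leaves of $\Gg$, contained in $\bvec$, with pairwise distinct neighbours, none of which lies in $\bvec$; and in that case $\bzigzag/\bzigzag\epsilon\bzigzag\cong\bzigzag\big(\Gg\setminus S,(\bvec\setminus S)\cup N(S)\big)$, a zigzag algebra with vertex condition on the (still connected) graph $\Gg\setminus S$.

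Granting this, the ``if'' direction is quick: if $\Gg=\typeA[n]$ with the vertex condition on one leaf $s$, then $s$ is a leaf in $\bvec$ with unique neighbour outside $\bvec$, so $\bzigzag e_s\bzigzag$ is a heredity ideal and $\bzigzag/\bzigzag e_s\bzigzag\cong\bzigzag(\typeA[n-1],\{s'\})$ with $s'$ the new endpoint; peeling the path vertex by vertex ends at a one-vertex (semisimple) algebra, producing a heredity chain, so $\bzigzag$ is quasi-hereditary. (Alternatively, by Theorem~\ref{theorem:cellb} the algebra is graded cellular here, and one checks directly that all cell bilinear forms are non-degenerate.) For the ``only if'' direction, suppose $\bzigzag(\Gg,\bvec)$ is quasi-hereditary. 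If it is semisimple then $\Gg$ has no edges, i.e.\ $\Gg=\typeA[1]$; otherwise there is a heredity ideal $\bzigzag\epsilon_S\bzigzag$ with quasi-hereditary quotient, and by the structural claim $S\neq\emptyset$ consists of leaves with pairwise distinct neighbours $N(S)$ avoiding $\bvec$, with quotient $\bzigzag\big(\Gg\setminus S,(\bvec\setminus S)\cup N(S)\big)$. By induction $\Gg\setminus S$ is of type $\typeA[]$ and $(\bvec\setminus S)\cup N(S)$ is a single leaf of it; in particular $N(S)$ is contained in that one vertex, forcing $|S|=1$. Writing $S=\{s\}$, the vertex $s$ is then a pendant attached to an endpoint of the path $\Gg\setminus s$, so $\Gg$ is a path and $\bvec=\{s\}$ is a single leaf of $\Gg$; this closes the induction.

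The main obstacle is the structural claim itself — verifying that $\bzigzag e_i\bzigzag$ ceases to be projective as soon as $\deg_{\Gg}(i)\geq 2$ or the neighbour of $i$ lies in $\bvec$, and keeping track of the overlaps of the pieces $\bzigzag e_i\bzigzag$ for $i\in S$; once it is in place both directions are purely formal. One should note that the coarser invariant does not suffice: the Cartan matrix of $\bzigzag(\Gg,\bvec)$ is $\dmatrix+\amatrix(\Gg)$ with $\dmatrix$ diagonal equal to $1$ at the vertices of $\bvec$ and $2$ elsewhere, and $\det(\dmatrix+\amatrix(\Gg))=1$ already for $\Gg=\atypeA[2]$ with the condition on one vertex, which by the above is not quasi-hereditary.
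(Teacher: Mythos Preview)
Your argument is correct and follows the same recursive strategy as the paper: identify which two-sided ideals of $\bzigzag$ are heredity ideals, show they force a leaf in $\bvec$ whose neighbour lies outside $\bvec$, and then pass to the quotient $\bzigzag(\Gg\setminus S,(\bvec\setminus S)\cup N(S))$. The paper does this as well (see the proof of \fullref{lemma:qh-big}), but organises the recursion slightly differently. Two points of comparison are worth noting.

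First, the paper restricts to heredity ideals generated by a \emph{single} primitive idempotent, citing \cite[Corollary~1.2]{bf-qh} to justify that such an ideal always exists in a quasi-hereditary algebra. You instead characterise \emph{all} heredity ideals $\bzigzag\epsilon_{S}\bzigzag$, which requires the extra observation that distinct $i,i'\in S$ with a common neighbour $j$ produce an overlap in $\volele_{\ii[j]}$ destroying projectivity. This is a little more work but makes the structural claim self-contained, and the induction then forces $|S|=1$ anyway.

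Second, and more substantially, your induction runs uniformly all the way down to $\typeA[1]$, whereas the paper's recursion in \fullref{lemma:qh-big} terminates once it reaches a finite or affine $\typeADE$ graph with a single vertex condition, and then appeals to the separate Cartan-determinant computations of \fullref{lemma:qh-small} (and the table in \eqref{eq:the-data}) to rule those out. Your approach avoids that case analysis entirely: the inductive hypothesis applied to $\Gg\setminus S$ immediately gives that $(\bvec\setminus S)\cup N(S)$ is a singleton leaf, whence $|N(S)|=|S|=1$ and $\bvec=S$. This is a genuine streamlining --- your closing remark that $\det(\cmatrix^{\bvec})=1$ for $\atypeA[2]$ with one vertex condition shows precisely why the determinant criterion alone is insufficient and why the paper needs the additional filtration argument in that case.
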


\begin{theoremmainb}\label{theorem:koszulb}
$\bzigzag$ is always Koszul.
\end{theoremmainb}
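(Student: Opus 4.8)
The algebra $\bzigzag$ is graded by path length, its degree-zero part is semisimple (one copy of $\K$ per vertex of $\Gg$), and it is generated in degrees $0$ and $1$, so that Koszulity is equivalent to the existence of a linear minimal projective resolution of each simple $\lsimple$; the plan is to produce these as announced for $\zigzag$, the new point being that the vertex condition removes the only obstruction that can occur. The first step is that $\bzigzag$ is \emph{quadratic}. Its defining relations a priori also include the vanishing of all paths of length $\geq 3$, but these follow from the degree-two relations — vanishing of mixed length-two paths, equality of the length-two loops $\pathxx{v}{j}{v}$ at a vertex $\obstuff v\notin\bvec$, and vanishing of $\pathxx{v}{j}{v}$ at $\obstuff v\in\bvec$. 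Indeed, a length-three path is already $0$ by a mixed relation unless it is a backtrack $\pathxxx{i}{j}{i}{j}=(\pathxx{i}{j}{i})(\pathx{i}{j})=(\pathx{i}{j})(\pathxx{j}{i}{j})$; if $\obstuff i\in\bvec$ or $\obstuff j\in\bvec$ it vanishes by the vertex condition, and otherwise one of $\obstuff i,\obstuff j$ has a second neighbour $\obstuff m$ — the alternative being $\Gg=\typeA[2]$, excluded by $\bvec\neq\emptyset$ — whereupon rewriting $\pathxx{i}{j}{i}=\pathxx{i}{m}{i}$ (or the analogous identity at $\obstuff j$) and applying a mixed length-two relation gives $0$.

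The second step deduces Koszulity from a quadratic Gröbner basis. Fix a total order on the arrows of the doubled quiver $\Ggd$ compatible with distance to $\bvec$, and take the induced length-lexicographic order on paths. Orient the quadratic relations so that mixed length-two paths reduce to $0$; for $\obstuff v\notin\bvec$ the loop equalities reduce $\pathxx{v}{j}{v}$ to the length-two loop through a fixed neighbour of $\obstuff v$ on a shortest path to $\bvec$; and for $\obstuff v\in\bvec$ the loop relations read $\pathxx{v}{j}{v}\rightsquigarrow 0$. One then checks that this set is a (reduced) Gröbner basis: the ambiguities are cubic overlaps of two of the listed relations, and each $S$-polynomial reduces to $0$ — this is precisely where the orientation towards $\bvec$ and the monomial shape of the loop relation at a $\bvec$-vertex are used. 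Since a quadratic algebra admitting a quadratic Gröbner basis is Koszul, and the resulting Priddy resolution yields the minimal linear projective resolutions of the $\lsimple$, this gives both the Koszulity and the announced algorithmic construction. (For $\zigzag$ the same analysis succeeds exactly when $\Gg$ is not of type $\typeADE$, which is Theorem~\ref{theorem:koszul}; the vertex condition repairs the remaining $\typeADE$ case.)

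I expect the main obstacle to be the uniform overlap check in the second step: a single argument, valid for all graphs $\Gg$ and all non-empty $\bvec$, uniformly handling leaves, high-valence vertices and vertices on cycles, $\bvec$- versus non-$\bvec$-vertices, and the (fixed-tie-break) well-definedness of the orientation towards $\bvec$ — the very place where the analogous check for $\zigzag$ breaks down when $\Gg$ is of type $\typeADE$. The degenerate cases $\typeA[1]$ (with $\bzigzag\cong\K$) and $\typeA[2]$ are handled directly.
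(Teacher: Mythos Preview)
Your approach is genuinely different from the paper's. The paper proves \fullref{theorem:koszulb} by analysing the minimal projective resolution of each simple directly: it tracks the radical layers of successive syzygies by a matrix Chebyshev recursion $U_t^{\bvec}(\amatrix)$ (see \eqref{eq:cpb-recursion-formula} and \eqref{eq:matrixcp-recursion}), rewrites $U_t^{\bvec}$ in terms of ordinary Chebyshev polynomials (\fullref{lemma:relation-polynomials}) to settle the non-$\typeADE$ case, and for the $\typeADE$ case introduces a \emph{singleton inflow} (\fullref{definition:koszul-inflow}) to give a signed path-counting interpretation of the entries of $U_t^{\bvec}(\amatrix)$ and prove their non-negativity. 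Your ``orient towards $\bvec$ along shortest paths'' is exactly a choice of singleton inflow, so the two arguments share the same combinatorial backbone; what differs is that you feed this orientation into a quadratic Gr\"obner basis/PBW argument rather than into an explicit resolution. Your route is more elementary (no spectral graph theory, no Chebyshev identities) and, if it goes through, gives Koszulity in one stroke; the paper's route in return yields the precise multiplicities in the minimal resolution and the global-dimension statement of \fullref{remark:global-dimension}.

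Two points deserve attention before this is a proof rather than a plan. First, the overlap check you flag is indeed the crux, and you should carry it out. The delicate ambiguities are the backtracks $\pathxxx{a}{b}{a}{b}$ and the mixed/loop overlaps such as $\pathxxx{a}{b}{a}{c}$ with $c=m(a)$: resolving them uses that $m(m(a))\neq a$ because distance to $\bvec$ strictly decreases along $m$, and that at a $\bvec$-vertex the loop relation is monomial. You also need to specify the arrow order precisely enough that $\pathxx{v}{m(v)}{v}$ is really the smallest loop at $v$; ordering arrows by the distance of their target to $\bvec$ with a fixed tie-break does this, but ties (several neighbours at distance $\mathrm{dist}(v)-1$) must be handled explicitly. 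Second, your closing remark that ``for $\zigzag$ the same analysis succeeds exactly when $\Gg$ is not of type $\typeADE$'' is not supported: without $\bvec$ there is no canonical orientation, and a singleton inflow need not exist (e.g.\ for $\typeA$ with $\bvec=\emptyset$), so the PBW argument does not obviously go through in the non-$\typeADE$ case either; you would need a different ordering scheme there. For the theorem at hand this aside is harmless, but it should be removed or justified separately.
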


Note that, as follows from our main 
theorems and their proofs, $\bzigzag(\typeA[])$ with vertex condition imposed 
on one leaf is a graded cellular, quasi-hereditary, Koszul algebra, 
which makes it quite special, since these are the only 
zigzag algebras having these properties. Indeed, 
$\bzigzag(\typeA[])$ has 
some very nice spectral properties 
(in the sense of spectral graph theory), and is the version of 
zigzag algebras which is mostly studied in the literature.

\begin{remark}\label{remark:generalization-graph}
Let us note that the restrictions on $\Gg$ being finite, connected and simple can be relaxed, 
and we imposed them for convenience.
\end{remark}

\subsubsection*{Strengthening and collecting known results}\label{subsubsec:main-extra}

Some of these results are actually 
known and stated implicitly or explicitly in the literature.
(This is partially due to the different nomenclatures.) 
So another point of our paper is to collect these in one document.

To elaborate a bit, the construction of the 
cellular respectively relative cellular 
structures for \fullref{theorem:cell} 
are folklore or standard, while the converses appear to be new.
\fullref{theorem:qh} is very easy 
(indeed, our proof here is basically one line) and 
stated for completeness. \fullref{theorem:koszul} appears in 
\cite{mv-koszul} or \cite{ee-koszul}, where the theorem is proven 
for the preprojective algebra (which is the Koszul 
dual of the zigzag algebra 
in case $\Gg$ is bipartite), 
and in \cite{d-zigzag-koszul} whenever $\Gg$ has a circle.
However, our methods (inspired by \cite{b-cube-zero}) 
to prove \fullref{theorem:koszul}, constructing explicit 
resolutions using Chebyshev polynomials, seem to be new as well.
Having this, let us note that the well-established periodicity of the 
Chebyshev polynomials for type $\typeADE$ graphs 
can be used to reprove the results about almost Koszulness 
of the preprojective algebra in these cases \cite{bbk-almost-koszul}.

\makeautorefname{theoremmainb}{Theorems}

Finally, as far as we can tell, \fullref{theorem:cellb}, \ref{theorem:qhb} and 
\ref{theorem:koszulb} are new and actually generalizations 
of the other three theorems. But, again, the 
constructions of the relevant structures for 
\fullref{theorem:cellb} and \ref{theorem:qhb} are folklore, 
and the punchlines are the conserves.

\begin{remark}\label{remark:generalization}
Note that we will always assume $\bvec\neq\emptyset$. The 
only reason for this is that we want to treat 
our main statements separately, i.e. there is no problem 
to allow $\bvec=\emptyset$ in most of the arguments which we are going to use, 
and $\bzigzag$ is in fact a strict generalization of $\zigzag=\zigzag^{\emptyset}$. 
And although we will formulate $\bzigzag$ 
as a quotient of $\zigzag$, we think of it as a quasi-hereditary 
cover of $\zigzag$, since this is what happens in the case 
of type $\typeA[]$ graphs.
\end{remark}

\makeautorefname{theoremmainb}{Theorem} 

\subsubsection*{Towards generalizations}\label{subsubsec:future}

A striking question is how zigzag algebras can be generalized, 
and how to control them algebraically.

There are at least two different 
generalizations of zigzag algebras for which our methods 
seems to be applicable: either the one from \cite{g-higher-zigzag}, 
generalizing the connection to e.g. the preprojective algebras, Iyama's higher 
representation types and categorical group actions, 
or the one from \cite[Section 5C]{mmmt-trihedral}, generalizing 
the connections to e.g. $2$-representation 
theory, affine Hecke algebras and modular representation theory. 
In both cases the spectral properties of the underlying 
graphs seem to play a crucial role and we hope that our methods 
presented here generalize to those algebras. 
In particular, the generalizations of zigzag algebras 
from \cite[Section 5C]{mmmt-trihedral} are usually not 
connected to preprojective algebras, and their quasi-hereditary 
covers are similar in spirit to $\bzigzag$ (by adding certain 
vertex conditions), and our paper might help to 
understand algebraic properties of these algebras.
\medskip

\noindent\textbf{Acknowledgments.} We like to thank 
Kevin Coulembier, Marco Mackaay, Volodymyr 
Mazorchuk and Krzysztof Putyra for helpful discussions, conversations and 
exchange of emails, Catharina Stroppel for comments 
on a draft of this paper, and the referee for helpful 
suggestions. We like to thank SAGEMath, which 
should have been a coauthor of this paper, but modestly refused to.

M.E. was supported by the Australian Research Council Grant DP150103431 and
D.T. was partially supported by NCCR SwissMAP during this work.
%
\section{Preliminaries}\label{section:prelim}

We denote by 
$\ii,\ii[j]$ etc. the vertices of $\Gg$, and $\upathx{i}{j}$ means that 
$\ii$ and $\ii[j]$ are connected in $\Gg$ by an edge. 
For each such graph $\Gg$ we chose an enumeration of its 
vertices, and we obtain its adjacency matrix $\amatrix=\amatrix(\Gg)$.

\begin{example}\label{example:ADE}
Of paramount importance for us are the finite 
type $\typeADE$ graphs
\begin{gather*}
\,
\xy
(0,0)*{
\begin{tikzcd}[ampersand replacement=\&,row sep=large,column sep=tiny,arrows={shorten >=-.5ex,shorten <=-.5ex},labels={inner sep=.05ex}]
\ii[1]
\arrow[thick,-]{r}{}
\&
\ii[2]
\arrow[thick,-]{r}{}
\&
\cdots
\arrow[thick,-]{r}{}
\&
\ii[n{-}1]
\arrow[thick,-]{r}{}
\&
\ii[n]
\end{tikzcd}};
(0,-10)*{\text{type $\typeA$; $n\in\Z_{\geq 1}$}};
\endxy,
\quad
\xy
(0,0)*{
\begin{tikzcd}[ampersand replacement=\&,row sep=tiny,column sep=tiny,arrows={shorten >=-.5ex,shorten <=-.5ex},labels={inner sep=.05ex}]
\phantom{.}
\&
\phantom{.}
\&
\phantom{.}
\&
\phantom{.}
\&
\ii[n{-}1]
\\
\ii[1]
\arrow[thick,-]{r}{}
\&
\ii[2]
\arrow[thick,-]{r}{}
\&
\cdots
\arrow[thick,-]{r}{}
\&
\ii[n{-}2]
\arrow[thick,-]{rd}{}
\arrow[thick,-]{ru}{}
\&
\phantom{1}
\\
\phantom{.}
\&
\phantom{.}
\&
\phantom{.}
\&
\phantom{.}
\&
\ii[n]\phantom{-}\phantom{1}
\end{tikzcd}};
(0,-10)*{\text{type $\typeD$; $n\in\Z_{\geq 4}$}};
(0,10)*{\text{\tiny $\amatrix(\typeD[4])=\begin{psmallmatrix}0 & 1 & 0 & 0\\ 
1 & 0 & 1 & 1\\ 
0 & 1 & 0 & 0\\ 
0 & 1 & 0 & 0\end{psmallmatrix}$}};
\endxy,
\end{gather*}
\begin{gather*}
\,
\xy
(0,0)*{
\begin{tikzcd}[ampersand replacement=\&,row sep=tiny,column sep=tiny,arrows={shorten >=-.5ex,shorten <=-.5ex},labels={inner sep=.05ex}]
\phantom{.}
\&
\phantom{.}
\&
\ii[6]
\&
\phantom{.}
\&
\phantom{.}
\\
\ii[1]
\arrow[thick,-]{r}{}
\&
\ii[2]
\arrow[thick,-]{r}{}
\&
\ii[3]
\arrow[thick,-]{r}{}
\arrow[thick,-]{u}{}
\&
\ii[4]
\arrow[thick,-]{r}{}
\&
\ii[5]
\end{tikzcd}};
(0,-10)*{\text{type $\typeE[6]$}};
\endxy,
\quad
\xy
(0,0)*{
\begin{tikzcd}[ampersand replacement=\&,row sep=tiny,column sep=tiny,arrows={shorten >=-.5ex,shorten <=-.5ex},labels={inner sep=.05ex}]
\phantom{.}
\&
\phantom{.}
\&
\ii[7]
\&
\phantom{.}
\&
\phantom{.}
\&
\phantom{.}
\\
\ii[1]
\arrow[thick,-]{r}{}
\&
\ii[2]
\arrow[thick,-]{r}{}
\&
\ii[3]
\arrow[thick,-]{r}{}
\arrow[thick,-]{u}{}
\&
\ii[4]
\arrow[thick,-]{r}{}
\&
\ii[5]
\arrow[thick,-]{r}{}
\&
\ii[6]
\end{tikzcd}};
(0,-10)*{\text{type $\typeE[7]$}};
\endxy,
\quad
\xy
(0,0)*{
\begin{tikzcd}[ampersand replacement=\&,row sep=tiny,column sep=tiny,arrows={shorten >=-.5ex,shorten <=-.5ex},labels={inner sep=.05ex}]
\phantom{.}
\&
\phantom{.}
\&
\ii[8]
\&
\phantom{.}
\&
\phantom{.}
\&
\phantom{.}
\&
\phantom{.}
\\
\ii[1]
\arrow[thick,-]{r}{}
\&
\ii[2]
\arrow[thick,-]{r}{}
\&
\ii[3]
\arrow[thick,-]{r}{}
\arrow[thick,-]{u}{}
\&
\ii[4]
\arrow[thick,-]{r}{}
\&
\ii[5]
\arrow[thick,-]{r}{}
\&
\ii[6]
\arrow[thick,-]{r}{}
\&
\ii[7]
\end{tikzcd}};
(0,-10)*{\text{type $\typeE[8]$}};
\endxy,
\end{gather*}
as well as their affine counterparts
\begin{gather*}
\,
\xy
(0,0)*{
\begin{tikzcd}[ampersand replacement=\&,row sep=tiny,column sep=tiny,arrows={shorten >=-.5ex,shorten <=-.5ex},labels={inner sep=.05ex}]
\phantom{.}
\&
\ii[1]
\arrow[thick,-]{dr}{}
\&
\phantom{.}
\\
\phantom{.}\ii[0]\phantom{.}
\arrow[thick,-]{ur}{}
\&
\phantom{.}
\&
\cdots
\arrow[thick,-]{dl}{}
\\
\phantom{.}
\&
\ii[n]
\arrow[thick,-]{ul}{}
\&
\phantom{.}
\end{tikzcd}};
(0,-10)*{\text{type $\atypeA$; $n\in\Z_{\geq 2}$}};
(0,10)*{\text{\tiny $\amatrix(\atypeA[2])=\begin{psmallmatrix}0 & 1 & 1\\ 
1 & 0 & 1\\ 
1 & 1 & 0\end{psmallmatrix}$}};
\endxy,
\quad
\xy
(0,0)*{
\begin{tikzcd}[ampersand replacement=\&,row sep=tiny,column sep=tiny,arrows={shorten >=-.5ex,shorten <=-.5ex},labels={inner sep=.05ex}]
\ii[0]
\&
\phantom{.}
\&
\phantom{.}
\&
\phantom{.}
\&
\phantom{.}
\&
\ii[n{-}1]
\\
\phantom{.}
\&
\ii[2]
\arrow[thick,-]{r}{}
\arrow[thick,-]{lu}{}
\arrow[thick,-]{ld}{}
\&
\ii[3]
\arrow[thick,-]{r}{}
\&
\cdots
\arrow[thick,-]{r}{}
\&
\ii[n{-}2]
\arrow[thick,-]{rd}{}
\arrow[thick,-]{ru}{}
\&
\phantom{1}
\\
\ii[1]
\&
\phantom{.}
\&
\phantom{.}
\&
\phantom{.}
\&
\phantom{.}
\&
\ii[n]\phantom{-}\phantom{1}
\end{tikzcd}};
(0,-10)*{\text{type $\atypeD$; $n\in\Z_{\geq 4}$}};
\endxy,
\end{gather*}
\begin{gather*}
\,
\xy
(0,0)*{
\begin{tikzcd}[ampersand replacement=\&,row sep=tiny,column sep=tiny,arrows={shorten >=-.5ex,shorten <=-.5ex},labels={inner sep=.05ex}]
\phantom{.}
\&
\phantom{.}
\&
\ii[0]
\&
\phantom{.}
\&
\phantom{.}
\\
\phantom{.}
\&
\phantom{.}
\&
\ii[6]
\arrow[thick,-]{u}{}
\&
\phantom{.}
\&
\phantom{.}
\\
\ii[1]
\arrow[thick,-]{r}{}
\&
\ii[2]
\arrow[thick,-]{r}{}
\&
\ii[3]
\arrow[thick,-]{r}{}
\arrow[thick,-]{u}{}
\&
\ii[4]
\arrow[thick,-]{r}{}
\&
\ii[5]
\end{tikzcd}};
(0,-11)*{\text{type $\atypeE[6]$}};
\endxy,
\;
\xy
(0,0)*{
\begin{tikzcd}[ampersand replacement=\&,row sep=tiny,column sep=tiny,arrows={shorten >=-.5ex,shorten <=-.5ex},labels={inner sep=.05ex}]
\phantom{.}
\&
\phantom{.}
\&
\phantom{.}
\&
\phantom{0}
\&
\phantom{.}
\&
\phantom{.}
\&
\phantom{.}
\\
\phantom{.}
\&
\phantom{.}
\&
\phantom{.}
\&
\ii[7]
\&
\phantom{.}
\&
\phantom{.}
\&
\phantom{.}
\\
\ii[0]
\arrow[thick,-]{r}{}
\&
\ii[1]
\arrow[thick,-]{r}{}
\&
\ii[2]
\arrow[thick,-]{r}{}
\&
\ii[3]
\arrow[thick,-]{r}{}
\arrow[thick,-]{u}{}
\&
\ii[4]
\arrow[thick,-]{r}{}
\&
\ii[5]
\arrow[thick,-]{r}{}
\&
\ii[6]
\end{tikzcd}};
(0,-11)*{\text{type $\atypeE[7]$}};
\endxy,
\;
\xy
(0,0)*{
\begin{tikzcd}[ampersand replacement=\&,row sep=tiny,column sep=tiny,arrows={shorten >=-.5ex,shorten <=-.5ex},labels={inner sep=.05ex}]
\phantom{.}
\&
\phantom{.}
\&
\phantom{.}
\&
\phantom{0}
\&
\phantom{.}
\&
\phantom{.}
\&
\phantom{.}
\&
\phantom{.}
\\
\phantom{.}
\&
\phantom{.}
\&
\ii[8]
\&
\phantom{.}
\&
\phantom{.}
\&
\phantom{.}
\&
\phantom{.}
\&
\phantom{.}
\\
\ii[1]
\arrow[thick,-]{r}{}
\&
\ii[2]
\arrow[thick,-]{r}{}
\&
\ii[3]
\arrow[thick,-]{r}{}
\arrow[thick,-]{u}{}
\&
\ii[4]
\arrow[thick,-]{r}{}
\&
\ii[5]
\arrow[thick,-]{r}{}
\&
\ii[6]
\arrow[thick,-]{r}{}
\&
\ii[7]
\arrow[thick,-]{r}{}
\&
\ii[0]
\end{tikzcd}};
(0,-11)*{\text{type $\atypeE[8]$}};
\endxy.
\end{gather*}
The enumeration of the vertices matters for some calculations, 
and we always number them as indicated above. 
(Note that we omit the type $\atypeA[1]$ graph.)
\end{example}

\subsubsection{The zigzag algebra}\label{subsubsec:zigzag}

The \textit{double graph} $\Ggd$ is the oriented graph obtained from $\Gg$ by doubling 
all edges $\upathx{i}{j}$ of $\Gg$ into a pair of parallel edges 
$\pathx{i}{j}$ (oriented from $\ii$ to $\ii[j]$) and $\pathx{j}{i}$ 
(oriented from $\ii[j]$ to $\ii$), and by adding two loops 
$\loopy=(\loopy)_{\ii}$ and $\loopy[t]=(\loopy[t])_{\ii}$ 
per vertex.

Let $\algstuff{R}(\Ggd)$ denote the path algebra for 
$\Ggd$. It is graded by using the path length, but putting loops in 
degree $2$. We identify its length 
zero paths with the vertices of $\Gg$ (e.g. $\ii$ also denotes 
the vertex idempotent), and we let 
$\pathxx{i}{j}{k}=\pathx{i}{j}\circ\pathx{j}{k}$ etc. denote the 
composition.

\begin{definition}\label{definition:zigzag}
Let $\zigzag=\zigzag(\Gg)$, for $\Gg$ having at least three vertices, 
be the quotient of $\algstuff{R}(\Ggd)$ by the following 
defining relations.

\begin{enumerate}[label=(\alph*)]

\setlength\itemsep{.15cm}

\renewcommand{\theenumi}{(\ref{definition:zigzag}.a)}
\renewcommand{\labelenumi}{\theenumi}

\item \label{enum:zigzag-1} \textbf{Boundedness.} Any path involving three 
distinct vertices is zero.

\renewcommand{\theenumi}{(\ref{definition:zigzag}.b)}
\renewcommand{\labelenumi}{\theenumi}

\item \label{enum:zigzag-2} \textbf{The relations of the cohomology ring of 
the variety of full flags in $\C^{2}$.} $\loopy\circ\loopy[t]=\loopy[t]\circ\loopy$, 
$\loopy+\loopy[t]=0$ and $\loopy\circ\loopy[t]=0$.

\renewcommand{\theenumi}{(\ref{definition:zigzag}.c)}
\renewcommand{\labelenumi}{\theenumi}

\item \label{enum:zigzag-3} \textbf{Zigzag.} $\pathxx{i}{j}{i}=\loopy-\loopy[t]$ 
for $\upathx{i}{j}$.

\end{enumerate}
In case $\Gg$ has one vertex we let $\zigzag=\K[\loopy,\loopy[t]]/(\loopy+\loopy[t],\loopy\loopy[t])$, 
by convention, and in case $\Gg$ has two vertices we additionally to \ref{enum:zigzag-1}, 
\ref{enum:zigzag-2} and \ref{enum:zigzag-3} kill paths of length three.

We call $\zigzag$ the \textit{zigzag algebra} associated to $\Gg$.
\end{definition}

The relations of $\zigzag$ are homogeneous with 
respect to the path length grading, which thus endow $\zigzag$ 
with the structure of a graded algebra. (Throughout, graded means $\Z$-graded.)

\begin{remark}\label{remark:zigzag-usual}
Our definition of $\zigzag$ is slightly different from the 
one as e.g. in \cite[Section 3]{hk-zigzag}, but they are equivalent. 
To see this use the isomorphism from $\K[\volele]/(\volele^{2})$ to 
$\K[\loopy,\loopy[t]]/(\loopy+\loopy[t],\loopy\loopy[t])$ 
given by $\volele\mapsto\loopy-\loopy[t]\in\End{\zigzag}(\ii)$. (If $\K$ has characteristic 
$2$, then $\volele\mapsto\loopy=\loopy[t]$ gives the isomorphism.) We prefer 
the formulation as in \fullref{definition:zigzag} since it fits 
to the generalizations of the zigzag algebra 
from \cite[Section 5C]{mmmt-trihedral}.
\end{remark}

We call $\volele_{\ii}=\loopy-\loopy[t]$ 
(or $\volele_{\ii}=\loopy$ in characteristic $2$) the \textit{volume element} (at vertex $\ii$).

\begin{example}\label{example:typeA}
The most classical examples of zigzag algebras are the cases where 
$\Gg$ is either a type $\typeA$ graph or a type $\atypeA$ graph.
\begin{gather*}
\zigzag(\typeA)=
\xy
(0,0)*{
\begin{tikzcd}[ampersand replacement=\&,row sep=large,column sep=scriptsize,arrows={shorten >=-.5ex,shorten <=-.5ex},labels={inner sep=.05ex}]
\ii[1]
\arrow[out=60,in=120,loop,distance=1.25em,swap]{}{\loopy} 
\arrow[out=-60,in=-120,loop,distance=1.25em]{}{\loopy[t]}
\arrow[yshift=.5ex,<-]{r}{}
\arrow[yshift=-.5ex,->,swap]{r}{}
\&
\ii[2]
\arrow[out=60,in=120,loop,distance=1.25em,swap]{}{\loopy} 
\arrow[out=-60,in=-120,loop,distance=1.25em]{}{\loopy[t]}
\arrow[yshift=.5ex,<-]{r}{}
\arrow[yshift=-.5ex,->,swap]{r}{}
\&
\cdots
\arrow[yshift=.5ex,<-]{r}{}
\arrow[yshift=-.5ex,->,swap]{r}{}
\&
\ii[n{-}1]
\arrow[out=60,in=120,loop,distance=1.25em,swap]{}{\loopy} 
\arrow[out=-60,in=-120,loop,distance=1.25em]{}{\loopy[t]}
\arrow[yshift=.5ex,<-]{r}{g}
\arrow[yshift=-.5ex,->,swap]{r}{f}
\&
\phantom{1}\ii[n]\phantom{1}
\arrow[out=60,in=120,loop,distance=1.25em,swap]{}{\loopy} 
\arrow[out=-60,in=-120,loop,distance=1.25em]{}{\loopy[t]}
\\
\end{tikzcd}};
(0,14)*{\text{{\tiny $f=(\pathx{\,n{-}1\,}{\,n\,}),\quad g=(\pathx{\,n\,}{\,n{-}1\,})$}}};
(0,-10)*{\text{{\tiny living on the type $\typeA$ graph}}};
\endxy
,\quad\quad
\zigzag(\atypeA)=
\xy
(0,0)*{
\begin{tikzcd}[ampersand replacement=\&,row sep=small,column sep=scriptsize,arrows={shorten >=-.5ex,shorten <=-.5ex},labels={inner sep=.05ex}]
\phantom{.}
\&
\ii[1]
\arrow[xshift=.3ex,yshift=.3ex,<-]{dr}{}
\arrow[xshift=-.3ex,yshift=-.3ex,->,swap]{dr}{}
\&
\phantom{.}
\\
\phantom{.}\ii[0]\phantom{.}
\arrow[xshift=-.3ex,yshift=.3ex,<-]{ur}{}
\arrow[xshift=.3ex,yshift=-.3ex,->,swap]{ur}{}
\&
\phantom{.}
\&
\cdots
\arrow[xshift=.3ex,yshift=-.3ex,<-]{dl}{}
\arrow[xshift=-.3ex,yshift=.3ex,->,swap]{dl}{}
\\
\phantom{.}
\&
\ii[n]
\arrow[xshift=-.3ex,yshift=-.3ex,<-]{ul}{g}
\arrow[xshift=.3ex,yshift=.3ex,->,swap]{ul}{f}
\&
\phantom{.}
\\
\end{tikzcd}};
(0,14)*{\text{{\tiny $f=(\pathx{\,n\,}{\,0\,}),\quad g=(\pathx{\,0\,}{\,n\,})$}}};
(0,-10)*{\text{{\tiny living on the type $\atypeA$ graph}}};
\endxy,
\end{gather*}
where we omitted the loops for the type $\atypeA$ graph for illustration purposes only.
\end{example}

\subsubsection{Basic properties}\label{subsubsec:zigzag-basics}

We omit the (easy) proofs of the following basics facts.

\begin{lemmaqed}\label{lemma:basics1}
If $\Gg$ has three or more vertices, then
$\zigzag$ is quadratic, i.e. it is generated in degree $1$, 
and its relations are generated in degree $2$.
\end{lemmaqed}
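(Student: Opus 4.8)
The plan is to establish the two halves of the assertion separately: that $\zigzag$ is generated in degree $1$ over its degree-$0$ part $S=\bigoplus_{\ii}\K\,\ii$, and that, presenting $\zigzag=T_S(V)/I$ with $V$ the $S$-bimodule spanned by the arrows $\pathx{i}{j}$, the defining ideal $I$ is generated by $I\cap(V\otimes_S V)$. Recall that $\zigzag$ is the path algebra $\algstuff{R}(\Ggd)$ modulo the relations \ref{enum:zigzag-1}--\ref{enum:zigzag-3}, and that $\algstuff{R}(\Ggd)$ is free over $S$ on the arrows (degree $1$) and the loops $\loopy,\loopy[t]$ (degree $2$).

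For generation in degree $1$, I would observe that the loops are redundant: since $\Gg$ is connected with at least three vertices, every vertex $\ii$ has a neighbour $\ii[j]$, and \ref{enum:zigzag-3} together with \ref{enum:zigzag-2} give $\loopy=\tfrac12\pathxx{i}{j}{i}$ and $\loopy[t]=-\tfrac12\pathxx{i}{j}{i}$ (respectively $\loopy=\pathxx{i}{j}{i}$ in characteristic $2$, under the identification in \fullref{remark:zigzag-usual}). So the loops lie in the subalgebra of $\zigzag$ generated by $S$ and $V$, hence $\zigzag$ is generated in degree $1$.

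For the relations, I would remove $\loopy,\loopy[t]$ from the presentation by a Tietze transformation, fixing for each $\ii$ one incident edge $\upathx{i}{j}$ and substituting the above expressions into \ref{enum:zigzag-1}--\ref{enum:zigzag-3}. After this, $\loopy+\loopy[t]=0$ and $\loopy\circ\loopy[t]=\loopy[t]\circ\loopy$ collapse to identities; each remaining instance of \ref{enum:zigzag-3} becomes a \emph{degree-$2$} relation $\pathxx{i}{k}{i}=\pathxx{i}{j}{i}$ (with $\ii[j]$ the chosen neighbour of $\ii$); the relations \ref{enum:zigzag-1} survive, those of their instances passing through a loop being consequences of the loop-free ones after the substitution; and the only surviving relation not manifestly of degree $2$ is $\loopy\circ\loopy[t]=0$, i.e. $(\pathxx{i}{j}{i})^2=0$, of degree $4$. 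It then remains to see that (a) the long instances of \ref{enum:zigzag-1} and (b) the relation $(\pathxx{i}{j}{i})^2=0$ lie in the two-sided ideal generated by the quadratic relations just listed. For (a): in any walk on $\Gg$ visiting three distinct vertices, at the step where the third vertex first appears its two predecessors are the two earlier (distinct) vertices, so the walk contains a consecutive length-$2$ piece $\pathxx{a}{b}{c}$ with $\ii[a],\ii[b],\ii[c]$ pairwise distinct, which already vanishes in degree $2$; hence so does the walk. For (b): write $(\pathxx{i}{j}{i})^2=\pathx{i}{j}\circ\pathxx{j}{i}{j}\circ\pathx{j}{i}$. If $\ii[j]$ has a neighbour $\ii[l]\neq\ii$, the degree-$2$ relation $\pathxx{j}{i}{j}=\pathxx{j}{l}{j}$ turns this into a word containing the length-$2$ piece $\pathxx{i}{j}{l}=0$; if instead $\ii[j]$ is a leaf---this is the one place the hypothesis of at least three vertices enters---then $\ii$ has a neighbour $\ii[m]\neq\ii[j]$, and the degree-$2$ relation $\pathxx{i}{j}{i}=\pathxx{i}{m}{i}$ turns $(\pathxx{i}{j}{i})^2$ into a word containing the length-$2$ piece $\pathxx{j}{i}{m}=0$. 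Either way $(\pathxx{i}{j}{i})^2=0$ follows from quadratic relations, so $I$ is generated in degree $2$ and $\zigzag$ is quadratic.

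I expect (b) to be the only step with real content: once the loops have been eliminated, $\loopy\circ\loopy[t]=0$ (equivalently $\loopy^2=0$) is the unique defining relation of degree exceeding $2$, and showing it is forced by the quadratic relations is exactly where connectivity and the ``three vertices'' hypothesis are used---which is also why \fullref{definition:zigzag} separately kills length-$3$ paths when $\Gg$ has only one or two vertices, since there the walk-based reductions above have nothing to bite on. The rest---the Tietze bookkeeping and the length-$2$ reduction of the boundedness relation---is routine.
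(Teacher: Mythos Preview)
Your argument is correct and supplies exactly the details the paper omits: the paper does not give a proof of this lemma at all (it is grouped under ``we omit the (easy) proofs''), and the sole hint---the remark that $\loopy+\loopy[t]=0$ can be eliminated by reparametrization---is precisely the starting point of your Tietze move. Your reduction of the boundedness relations to their length-$2$ instances, and your two-case derivation of $(\pathxx{i}{j}{i})^{2}=0$ from the quadratic relations, are both clean and correct; the latter is indeed the only place the hypothesis $|\Gg|\geq 3$ is genuinely needed, as you note.

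One small comment on characteristic~$2$: taken literally, the relations \ref{enum:zigzag-2} and \ref{enum:zigzag-3} force $\pathxx{i}{j}{i}=\loopy-\loopy[t]=0$ and then $\loopy$ is \emph{not} generated in degree~$1$. Your parenthetical ``under the identification in \fullref{remark:zigzag-usual}'' is the right move---one should really pass to the standard presentation with a single loop $\volele$ satisfying $\pathxx{i}{j}{i}=\volele$ before running the argument---but you might want to say this more explicitly rather than leaving it as a citation. The paper itself is informal on this point (cf.\ the sentence after the lemma and the characteristic-$2$ clauses in \fullref{remark:zigzag-usual}), so your level of precision matches theirs; still, spelling it out would make your proof self-contained.
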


Note hereby that 
the relation $\loopy+\loopy[t]=0$ can be omitted by reparametrization of 
the endomorphisms spaces, cf. \fullref{remark:zigzag-usual}.

\begin{lemmaqed}\label{lemma:basics2}
The association
\[
\mathrm{tr}(\ii)=0,
\quad
\mathrm{tr}(\volele_{\ii})=1,
\quad
\mathrm{tr}(\pathx{i}{j})=0,
\]
gives rise to 
a non-degenerate trace form $\mathrm{tr}\colon\zigzag\to\K$, which endows 
$\zigzag$ with the structure of a (non-commutative) Frobenius algebra.
\end{lemmaqed}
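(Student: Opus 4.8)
The plan is to pin the trace down on a basis of $\zigzag$ and then reduce both halves of the claim — that $\mathrm{tr}$ is a trace, and that it is non-degenerate — to a short check organised by the path-length grading. First I would recall the standard basis and multiplication table of $\zigzag$: the elements
\[
\{\ii\}_{\ii\in\Gg}\;\cup\;\{\pathx{i}{j}\}_{\upathx{i}{j}}\;\cup\;\{\volele_{\ii}\}_{\ii\in\Gg}
\]
form a $\K$-basis of $\zigzag$ (see \cite[Section 3]{hk-zigzag} and \fullref{remark:zigzag-usual}), homogeneous of path-length degrees $0$, $1$ and $2$, so that $\zigzag$ is concentrated in degrees $0,1,2$; apart from the obvious action of the idempotents, the only nonzero products of basis elements are $\pathx{i}{j}\circ\pathx{j}{i}=\volele_{\ii}$, one for each edge $\upathx{i}{j}$, and $\volele_{\ii}$ spans the degree-$2$ part of $\ii\zigzag\ii$. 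Hence $\mathrm{tr}$, defined on this basis by $\mathrm{tr}(\ii)=\mathrm{tr}(\pathx{i}{j})=0$ and $\mathrm{tr}(\volele_{\ii})=1$, is well-defined, and it is simply the linear form reading off the coefficient of $\sum_{\ii}\volele_{\ii}$ in the degree-$2$ component; in particular it annihilates the degree-$0$ and degree-$1$ parts. (The one- and two-vertex cases, on algebras of dimension $2$ and $6$, are checked directly.)

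Second, the trace property $\mathrm{tr}(ab)=\mathrm{tr}(ba)$. By bilinearity it suffices to take $a,b$ among the basis, and since $\mathrm{tr}$ kills everything outside degree $2$ we may assume $\deg a+\deg b=2$, that is $(\deg a,\deg b)\in\{(0,2),(2,0),(1,1)\}$. For $(0,2)$ and $(2,0)$ we have, up to swapping, $a=\ii$ and $b=\volele_{\ii[j]}$, and $ab=ba$ equals $\volele_{\ii}$ if $i=j$ and $0$ otherwise. For $(1,1)$, with $a=\pathx{i}{j}$ and $b=\pathx{k}{l}$, composability together with boundedness \ref{enum:zigzag-1} (and $\Gg$ being simple) forces $ab=\volele_{\ii}$ when $j=k$ and $l=i$, and $ab=0$ otherwise, while $ba=\volele_{\ii[k]}$ under the same condition; in every case $\mathrm{tr}(ab)=\mathrm{tr}(ba)$, equal to $1$ exactly when $j=k$ and $l=i$. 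All remaining products of basis elements sit in a homogeneous degree $\neq 2$, so both traces vanish there. This also exhibits the pairing $\langle a,b\rangle=\mathrm{tr}(ab)$ as symmetric and associative, so $\zigzag$ will in fact be a symmetric algebra, a fortiori Frobenius.

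Third, non-degeneracy. I would write down the Gram matrix of $\langle-,-\rangle$ in the above basis, ordered as (degree $0$, degree $1$, degree $2$). Since $\langle a,b\rangle\neq 0$ forces $\deg a+\deg b=2$, this matrix is block anti-diagonal: the degree-$0$/degree-$2$ block is the identity, since $\langle\ii,\volele_{\ii[j]}\rangle$ is $1$ for $i=j$ and $0$ otherwise; the degree-$1$/degree-$1$ block is the permutation matrix of the edge-reversal involution $\pathx{i}{j}\mapsto\pathx{j}{i}$, since $\langle\pathx{i}{j},\pathx{k}{l}\rangle=1$ precisely when $j=k$ and $l=i$; and the degree-$2$/degree-$0$ block is the identity once more. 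Such a matrix has determinant $\pm 1$, hence is invertible, so $\langle-,-\rangle$ is non-degenerate; equivalently the induced map of $\zigzag$-bimodules $\zigzag\to\mathrm{Hom}_{\K}(\zigzag,\K)$ is an isomorphism, which is exactly the Frobenius condition. Finally $\zigzag$ is non-commutative whenever $\Gg$ has an edge, because $\pathx{i}{j}\circ\pathx{j}{i}=\volele_{\ii}$ and $\pathx{j}{i}\circ\pathx{i}{j}=\volele_{\ii[j]}$ then lie in the distinct summands $\ii\zigzag\ii$ and $\ii[j]\zigzag\ii[j]$.

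The one genuinely substantive ingredient is the first step — the explicit basis and, above all, the multiplication table of $\zigzag$; everything afterwards is the bookkeeping displayed above. That normal form is routine for zigzag algebras: \ref{enum:zigzag-1} annihilates every path through three distinct vertices, while \ref{enum:zigzag-2} and \ref{enum:zigzag-3} collapse all loop- and backtracking-monomials onto the $\volele_{\ii}$, leaving exactly the stated basis (the same computation underlying \fullref{lemma:basics1}). In characteristic $2$ one runs this on the usual zigzag algebra of \fullref{remark:zigzag-usual}, for which $\pathx{i}{j}\circ\pathx{j}{i}=\volele_{\ii}$ still holds, so the argument goes through unchanged.
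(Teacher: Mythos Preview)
The paper omits this proof entirely (it is one of the ``basic facts'' whose easy proofs are left out), so there is no argument to compare against. Your proof is correct and is exactly the standard verification one would expect: the explicit basis from \fullref{lemma:basics3}, the observation that $\mathrm{tr}$ picks out the degree-$2$ component, the case check of $\mathrm{tr}(ab)=\mathrm{tr}(ba)$ by degree, and non-degeneracy via the block anti-diagonal Gram matrix (identity blocks pairing degree $0$ with degree $2$, and the edge-reversal permutation on degree $1$). Your remark that the form is in fact symmetric, so that $\zigzag$ is a symmetric algebra and not merely Frobenius, is a correct and worthwhile strengthening.
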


Using \fullref{lemma:basics2}, the following is also easy to prove, where 
we recall that elements of $\zigzag$ are graded by path length.

\begin{lemmaqed}\label{lemma:basics3}
We have
\begin{gather*}
\qdim(\Hom{\zigzag}(\ii,\ii[j]))
=
\begin{cases}
\begin{aligned}
\qnumber{2}, \quad &\text{if }\ii=\ii[j], \quad &\{\ii,\volele_{\ii}\}\text{ is a basis},
\\
\qpar, \quad &\text{if }\upathx{i}{j}, \quad &\{\pathx{i}{j}\}\text{ is a basis},
\\
0, \quad &\text{else}, \quad &\emptyset\text{ is a basis},
\end{aligned}
\end{cases}
\end{gather*}
where $\qdim(\placeholder)$ denotes the graded dimension, and $\qnumber{2}=1+\qpar^{2}$.
\end{lemmaqed}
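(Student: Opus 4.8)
The quickest route is to observe that \fullref{lemma:basics2} already encodes what we need: the trace form there is a well-defined non-degenerate form on $\zigzag$ precisely because $\{\ii\}\cup\{\volele_{\ii}\}\cup\{\pathx{i}{j}:\upathx{i}{j}\}$ is a $\K$-basis of $\zigzag$, and with respect to the path-length grading one has $\deg\ii=0$, $\deg\volele_{\ii}=2$ and $\deg\pathx{i}{j}=1$. Restricting to the block $\ii\,\zigzag\,\ii[j]=\Hom{\zigzag}(\ii,\ii[j])$ then reads off the three cases: basis $\{\ii,\volele_{\ii}\}$ in degrees $0$ and $2$ when $\ii=\ii[j]$ (so $\qdim=1+\qpar^{2}=\qnumber{2}$), basis $\{\pathx{i}{j}\}$ in degree $1$ when $\upathx{i}{j}$ (so $\qdim=\qpar$), and $0$ otherwise. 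So the real content is the basis claim, and I would spell that out as follows.

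For the spanning direction, reduce an arbitrary path of the double graph $\Ggd$ to the claimed elements. By the boundedness relation \ref{enum:zigzag-1} such a path visits at most two, necessarily adjacent, vertices. If it stays at one vertex $\ii$, then using \ref{enum:zigzag-2} (which gives $\loopy[t]=-\loopy$ and, from $\loopy\circ\loopy[t]=0$, also $\loopy^{2}=0$) it reduces to $\ii$, to a scalar multiple of $\loopy=\tfrac12\volele_{\ii}$, or to $0$. If it visits two adjacent vertices $\ii,\ii[j]$, rewrite every loop it contains as $\loopy=\tfrac12\pathxx{i}{j}{i}$ by the zigzag relation \ref{enum:zigzag-3}, turning the path, up to a scalar, into an alternating word in $\pathx{i}{j}$ and $\pathx{j}{i}$; the surviving such words have length $0$ (a vertex), $1$ ($\pathx{i}{j}$) or $2$ ($\pathxx{i}{j}{i}=\volele_{\ii}$), since any word of length $\geq 3$ contains $\pathx{i}{j}\circ\pathx{j}{i}\circ\pathx{i}{j}$ or $\pathx{j}{i}\circ\pathx{i}{j}\circ\pathx{j}{i}$, which I claim is zero. (The characteristic-$2$ variant of \fullref{remark:zigzag-usual}, with $\volele_{\ii}=\loopy$, runs in parallel.)

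To prove $\pathx{i}{j}\circ\pathx{j}{i}\circ\pathx{i}{j}=0$: associating one way it equals $\volele_{\ii}\circ\pathx{i}{j}$, associating the other way $\pathx{i}{j}\circ\volele_{\ii[j]}$, by \ref{enum:zigzag-3}. Since $\Gg$ is connected with at least three vertices, either $\ii$ has a neighbour $\ii[k]\neq\ii[j]$ or $\ii[j]$ has a neighbour $\ii[k]\neq\ii$; re-expressing $\volele_{\ii}=\pathxx{i}{k}{i}$ (respectively $\volele_{\ii[j]}=\pathxx{j}{k}{j}$) through that neighbour produces the sub-path $\pathx{k}{i}\circ\pathx{i}{j}$ (respectively $\pathx{i}{j}\circ\pathx{j}{k}$) on the three distinct vertices $\ii[k],\ii,\ii[j]$, which vanishes by \ref{enum:zigzag-1}. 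In the two-vertex case the triple product has length three and is killed outright by the extra clause of \fullref{definition:zigzag}, and the one-vertex algebra is given by an explicit $2$-dimensional presentation. This finishes the spanning direction; in particular no path from $\ii$ to $\ii[j]$ survives when $\ii\neq\ii[j]$ are non-adjacent. For linear independence it suffices, the candidate elements being homogeneous of pairwise distinct degrees inside each block, to know they are nonzero, which is immediate from \fullref{lemma:basics2}: $\mathrm{tr}(\volele_{\ii})=1$ and $\mathrm{tr}(\pathx{i}{j}\circ\pathx{j}{i})=\mathrm{tr}(\pathxx{i}{j}{i})=\mathrm{tr}(\volele_{\ii})=1$ force $\volele_{\ii}\neq 0$ and $\pathx{i}{j}\neq 0$, and then $\ii\neq 0$ since $\ii\circ\volele_{\ii}=\volele_{\ii}$.

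The step I expect to be the main obstacle is exactly the vanishing $\pathx{i}{j}\circ\pathx{j}{i}\circ\pathx{i}{j}=0$: it is not forced by any single defining relation and genuinely needs a third vertex fed into the zigzag relation (or, in the degenerate two-vertex situation, the ad hoc extra relation), so the case split on leaves versus interior vertices has to be handled carefully. Everything else is routine bookkeeping with \ref{enum:zigzag-1}--\ref{enum:zigzag-3} together with the trace values from \fullref{lemma:basics2}.
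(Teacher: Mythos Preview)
Your proof is correct and aligns with the paper's approach: the paper omits the proof entirely, stating only that ``Using \fullref{lemma:basics2}, the following is also easy to prove,'' and you have filled in precisely the details that hint invites, reducing paths via the defining relations for spanning and invoking the trace form for nonvanishing. Your identification of the vanishing $\pathx{i}{j}\circ\pathx{j}{i}\circ\pathx{i}{j}=0$ as the only genuinely non-immediate step, and its treatment via a third vertex (or the ad hoc length-three relation in the two-vertex case), is exactly the point where the relations interact nontrivially.
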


\subsubsection{Projective and simple modules}\label{subsubsec:zigzag-modules}

For $\sshift\in\Z$ we will denote by $\qpar^{\sshift}$ degree shifts using the convention that 
a map of degree $d$ between $\algstuff{M}$ and $\algstuff{N}$ is of degree 
$d-\sshift_{1}+\sshift_{2}$ seen 
as a map from $\qpar^{\sshift_{1}}\algstuff{M}$ to 
$\qpar^{\sshift_{2}}\algstuff{N}$. Further,
for us the action of $\zigzag$ on modules will always be given by
\[
\text{left: pre-composition of paths}
\quad\&\quad
\text{right: post-composition of paths}.
\] 
In particular,
\begin{gather*}
\qpar^{\sshift}\lproj
=
\{
\ii,\pathx{j}{i},\volele_{\ii}\mid\upathx{i}{j}
\},
\quad\quad
\qpar^{\sshift}\rproj
=
\{
\ii,\pathx{i}{j},\volele_{\ii}\mid\upathx{i}{j}
\},
\end{gather*}
are left, respectively right, graded
projective $\zigzag$-modules with $\ii$ being in degree $\sshift$. 
Moreover,  
\begin{gather*}
\{
\qpar^{\sshift}\lproj\mid\ii\in\Gg,\sshift\in\Z
\},
\quad\quad
\{
\qpar^{\sshift}\rproj\mid\ii\in\Gg,\sshift\in\Z
\},
\end{gather*}
are complete, irredundant sets of indecomposable, graded projective left, respectively 
right, $\zigzag$-modules.
In contrast, the simple left $\zigzag$-modules $\lsimple$ are one-dimensional and only 
one vertex idempotent of $\zigzag$ acts non-trivially on them, 
and elements of positive degree act as zero. Thus, they 
can be identified with elements of $\zigzag$ and, for example, for $\upathx{i}{j}$ 
\begin{gather*}
\lsimple
=
\{
\ii
\}
\cong
\qpar^{-1}
\{
\pathx{i}{j}
\}
\cong
\qpar^{-2}
\{
\volele_{\ii}
\}
\end{gather*}
is the same incarnation of the simple $\lsimple$ corresponding to $\ii$. Similarly, 
of course, for the right simples $\rsimple$.
Thus, we get the Loewy picture
\begin{gather}\label{eq:loewy}
\lproj
=
\begin{gathered}
\ii
\\
\pathx{\ii[j]}{\ii}
\\
\volele_{\ii}
\end{gathered}
\;(\text{for }\upathx{i}{j})
,\quad\quad
\rproj
=
\begin{gathered}
\ii
\\
\pathx{\ii}{\ii[j]}
\\
\volele_{\ii}
\end{gathered}
\;(\text{for }\upathx{i}{j})
\end{gather}
of the projectives, where the 
vertex idempotent spans the head and
the volume element spans the socle. 

Having all this, the following easy, but crucial, statement is immediate, 
where $\imatrix$ is the identity matrix. 
(Recall hereby that the \textit{graded Cartan matrix} encodes the graded filtration of 
the projectives $\lproj$ or $\rproj$ by simples, where we 
enumerate the rows and columns as given by 
the enumeration of the vertices.)

\begin{propositionqed}\label{proposition:cartan-zigzag}
The graded Cartan matrix $\cmatrix_{\qpar}=\cmatrix_{\qpar}(\zigzag)$ of $\zigzag$ is 
$\qnumber{2}\imatrix+\qpar\amatrix$.
\end{propositionqed}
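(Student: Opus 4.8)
The plan is to read off the graded Cartan matrix directly from the Loewy structure of the projectives recorded in~\eqref{eq:loewy}, using the basis for the Hom-spaces already computed in \fullref{lemma:basics3}. Recall that the $(\ii[j],\ii)$-entry of the graded Cartan matrix $\cmatrix_{\qpar}$ is, by definition, the graded composition multiplicity $[\lproj : \lsimple[j]]_{\qpar} = \sum_{\sshift} [\lproj : \qpar^{\sshift}\lsimple[j]]\,\qpar^{\sshift}$, which equals $\qdim\big(\Hom{\zigzag}(\lproj[j],\lproj)\big)$ since $\lproj[j]$ is the projective cover of $\lsimple[j]$ and $\Hom{\zigzag}(\lproj[j],-)$ computes the multiplicity of $\lsimple[j]$ in each graded piece. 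Alternatively — and this is the most self-contained route — one simply reads the graded multiplicities off the three-layer Loewy diagram in~\eqref{eq:loewy}.

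First I would fix a vertex $\ii$ and examine the column of $\cmatrix_{\qpar}$ indexed by $\ii$, i.e. the graded Jordan--H\"older content of $\lproj$. From~\eqref{eq:loewy}, $\lproj$ has head $\lsimple$ in degree $0$, its radical layer is spanned by the arrows $\pathx{\ii[j]}{\ii}$ for all $\ii[j]$ with $\upathx{i}{j}$, each contributing a copy of $\lsimple[j]$ in degree $1$, and its socle is spanned by the volume element $\volele_{\ii}$, contributing a copy of $\lsimple$ in degree $2$. Hence the $(\ii,\ii)$-entry is $1 + \qpar^{2} = \qnumber{2}$, the $(\ii[j],\ii)$-entry is $\qpar$ for each edge $\upathx{i}{j}$ (note that if there are multiple edges one gets the corresponding multiple, which is exactly what the adjacency matrix $\amatrix$ records, though here $\Gg$ is simple so the entries are $0$ or $1$), and all other entries in that column vanish. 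Since the entries of $\amatrix$ are precisely $1$ when $\upathx{i}{j}$ and $0$ otherwise, and $\imatrix$ contributes the diagonal, this reads as $\cmatrix_{\qpar} = \qnumber{2}\imatrix + \qpar\amatrix$. One should also quickly check the degenerate cases: if $\Gg$ has one vertex, $\zigzag = \K[\loopy,\loopy[t]]/(\loopy+\loopy[t],\loopy\loopy[t])$ is two-dimensional with basis $\{\ii,\volele_{\ii}\}$ in degrees $0,2$, so $\cmatrix_{\qpar} = \qnumber{2} = \qnumber{2}\imatrix + \qpar\amatrix$ since $\amatrix = (0)$; the two-vertex case with length-three paths killed is likewise immediate from \fullref{lemma:basics3}.

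There is essentially no obstacle here: the statement is, as the text says, ``easy but crucial,'' and all the needed input — the explicit Loewy series of the projectives, and the basis of each graded Hom-space — has already been established. The only point deserving a word of care is making sure one is using the convention under which composition multiplicities are recorded by the matrix entries (rows/columns enumerated by vertices as fixed in \fullref{example:ADE}) consistently with the orientation of arrows in $\Ggd$; the symmetry of $\amatrix$ for a simple undirected graph means the left-module and right-module versions give the same answer, so no ambiguity arises. I would therefore present the argument in two or three sentences invoking~\eqref{eq:loewy} and \fullref{lemma:basics3}, and simply note the one- and two-vertex conventions in passing.
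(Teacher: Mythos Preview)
Your proposal is correct and matches the paper's approach: the paper states this proposition without proof, declaring it ``immediate'' from the preceding discussion of the Loewy structure~\eqref{eq:loewy} and the Hom-space bases in \fullref{lemma:basics3}, which is exactly what you unpack. Your handling of the degenerate one- and two-vertex cases is a reasonable extra care that the paper leaves implicit.
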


In particular, forgetting the grading, the \textit{Cartan 
matrix} $\cmatrix=\cmatrix(\zigzag)$ of $\zigzag$ is just $2\imatrix+\amatrix$.

From now on we will focus on the case of left modules 
(and omit to say so); the case of right modules can be 
done in the same way.

\subsubsection{Vertex conditions}\label{subsubsec:zigzag-boundary}

Fix a non-empty set $\bvec$ of vertices of $\Gg$.

\begin{definition}\label{definition:bzigzag}
The \textit{zigzag algebra} $\bzigzag=\bzigzag(\Gg)$ \textit{with vertex condition} for $\bvec$ 
is the quotient of $\zigzag$ obtained by 
killing the volume elements $\volele_{\ii[c]}$ 
for $\ii[c]\in\bvec$.
\end{definition}

Clearly, everything done above for $\zigzag$ 
works, mutatis mutandis, for $\bzigzag$ as well. (However, $\bzigzag$ 
is always quadratic.)
In particular, the Loewy pictures are as in \eqref{eq:loewy}, but with 
the projective $\lprojb[c]$ for $\ii[c]\in\bvec$ having no volume element, and
we will use the superscript ${}^{\bvec}$ to indicate 
$\bzigzag$-modules which are different from their $\zigzag$-counterparts.

The following combinatorial difference will play a key role, where
we denote by $\ematrix_{\bvec}$ the matrix with 
only non-zero entry equal to $1$ in the $\ii[c]$-$\ii[c]$ position 
for $\ii[c]\in\bvec$.

\begin{propositionqed}\label{proposition:cartan-bzigzag}
The graded Cartan matrix $\cmatrix_{\qpar}^{\bvec}=\cmatrix_{\qpar}(\bzigzag)$ of $\bzigzag$ is 
$\qnumber{2}\imatrix-\qpar^{2}\ematrix_{\bvec}+\qpar\amatrix$.
\end{propositionqed}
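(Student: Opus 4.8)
The plan is to realize $\bzigzag$ explicitly as a quotient of $\zigzag$ by a tiny two-sided ideal, and then read the Cartan matrix off from \fullref{proposition:cartan-zigzag}. By \fullref{definition:bzigzag} we have $\bzigzag=\zigzag/\qhideal$, where $\qhideal$ is the two-sided ideal of $\zigzag$ generated by the volume elements $\volele_{\ii[c]}$ with $\ii[c]\in\bvec$.

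The first, and only delicate, point is to check that $\qhideal=\bigoplus_{\ii[c]\in\bvec}\K\volele_{\ii[c]}$, i.e. that each $\volele_{\ii[c]}$ already spans a two-sided ideal by itself. Here I would argue as follows: $\volele_{\ii[c]}=\ii[c]\circ\volele_{\ii[c]}\circ\ii[c]$, so a product $b\circ\volele_{\ii[c]}\circ b'$ of two basis elements (in the basis of \fullref{lemma:basics3}) vanishes unless $b=b\circ\ii[c]$ and $b'=\ii[c]\circ b'$; by \fullref{lemma:basics3} every nonzero element of $\zigzag$ between two fixed vertices has degree at most $2$, and $\volele_{\ii[c]}$ itself already has degree $2$, so the product lands in degree $\geq 3$ — hence is $0$ — as soon as $b$ or $b'$ has positive degree (one also uses $\volele_{\ii[c]}\circ\volele_{\ii[c]}=0$, immediate from \ref{enum:zigzag-2}). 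Thus only $b=b'=\ii[c]$ contributes, and $\qhideal=\bigoplus_{\ii[c]\in\bvec}\K\volele_{\ii[c]}$; in particular $\bzigzag$ has, as a graded vector space, the basis of \fullref{lemma:basics3} with the volume elements at the vertices of $\bvec$ deleted.

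After this the rest is bookkeeping. Since $\qhideal$ lies in the socle, hence in the radical, of $\zigzag$, the images of the vertex idempotents remain a complete set of primitive orthogonal idempotents of $\bzigzag$, and the indecomposable projective $\bzigzag$-module at $\ii$ is $\lprojb=\zigzag\,\ii/(\qhideal\cap\zigzag\,\ii)$. As $\qhideal\cap\zigzag\,\ii$ equals $\K\volele_{\ii}$ when $\ii\in\bvec$ and is $0$ otherwise, we get $\lprojb=\lproj$ for $\ii\notin\bvec$, whereas for $\ii[c]\in\bvec$ the module $\lprojb[c]=\lproj[c]/\K\volele_{\ii[c]}$ is $\lproj[c]$ with its degree-$2$ socle copy of $\lsimple[c]$ stripped off — precisely the Loewy description recorded right after \fullref{definition:bzigzag}.

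Finally I would reassemble the graded Cartan matrix, whose $(\ii,\ii[j])$ entry is the graded multiplicity of $\lsimple[j]$ in $\lprojb$ (equivalently $\qdim(\ii\,\bzigzag\,\ii[j])$), exactly as for \fullref{proposition:cartan-zigzag}. By the previous paragraph the $\ii$-th column of $\cmatrix_{\qpar}(\bzigzag)$ agrees with the $\ii$-th column of $\cmatrix_{\qpar}(\zigzag)=\qnumber{2}\imatrix+\qpar\amatrix$ when $\ii\notin\bvec$, and equals that column with a single $\qpar^{2}$ removed from the $\ii[c]$-$\ii[c]$ entry when $\ii[c]\in\bvec$. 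Putting the columns back together yields $\cmatrix_{\qpar}(\bzigzag)=\cmatrix_{\qpar}(\zigzag)-\qpar^{2}\ematrix_{\bvec}=\qnumber{2}\imatrix-\qpar^{2}\ematrix_{\bvec}+\qpar\amatrix$, which is the claim. The only genuine obstacle is the ideal computation of the second paragraph — once that is in hand, nothing else can go wrong.
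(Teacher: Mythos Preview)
Your argument is correct and is essentially the approach the paper takes, just spelled out in more detail: the paper does not give a separate proof of this proposition (it is stated with an immediate $\blacksquare$), relying instead on the remark preceding \fullref{definition:bzigzag} that ``everything done above for $\zigzag$ works, mutatis mutandis, for $\bzigzag$'' together with the Loewy description that $\lprojb[c]$ for $\ii[c]\in\bvec$ simply loses its volume element. Your careful verification that $\qhideal=\bigoplus_{\ii[c]\in\bvec}\K\volele_{\ii[c]}$ via the degree bound from \fullref{lemma:basics3} is exactly what justifies that Loewy picture, so you have filled in the gap the paper leaves implicit.
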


The Cartan matrix $\cmatrix^{\bvec}=\cmatrix^{\bvec}(\bzigzag)$ of $\zigzag$ is
just the dequantization $\cmatrix^{\bvec}=2\imatrix-\ematrix_{\bvec}+\amatrix$.

Let $\Gg-\ii[c]$ denote the graph obtained from $\Gg$ by removing a fixed
vertex $\ii[c]$. Further, let us 
write $\zigzag=\zigzag^{\emptyset}$ for convenience of notation. 

\begin{lemma}\label{lemma:the-cartan-cell2}
We have 
$\mathrm{det}(\cmatrix_{\qpar}^{\bvec})=\mathrm{det}(\cmatrix_{\qpar}^{\bvec}(\zigzag^{\bvec-\ii[c]}))-
\mathrm{det}(\cmatrix_{\qpar}^{\bvec}(\zigzag^{\bvec-\ii[c]}(\Gg-\ii[c])))$.
\end{lemma}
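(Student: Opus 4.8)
The plan is to read the identity off \fullref{proposition:cartan-bzigzag} by a single Laplace expansion along the row and column indexed by $\ii[c]$. Throughout we assume $\ii[c]\in\bvec$, which is implicit in the notation $\bvec-\ii[c]$ and is the only case in which there is anything to prove: if $\ii[c]\notin\bvec$ then $\bvec-\ii[c]=\bvec$ and the first term on the right already equals the left-hand side.

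\emph{Step 1: the two Cartan matrices differ in exactly one diagonal entry.} By \fullref{proposition:cartan-bzigzag},
\[
\cmatrix_{\qpar}^{\bvec}=\qnumber{2}\imatrix-\qpar^{2}\ematrix_{\bvec}+\qpar\amatrix
\qquad\text{and}\qquad
\cmatrix_{\qpar}^{\bvec}(\zigzag^{\bvec-\ii[c]})=\qnumber{2}\imatrix-\qpar^{2}\ematrix_{\bvec-\ii[c]}+\qpar\amatrix,
\]
both square of size the number of vertices of $\Gg$, with rows and columns indexed by the fixed enumeration of those vertices. Since $\ematrix_{\bvec}-\ematrix_{\bvec-\ii[c]}$ is the elementary matrix whose only nonzero entry is a $1$ in the $\ii[c]$-$\ii[c]$ slot, the two matrices agree entrywise except in that slot, where $\cmatrix_{\qpar}^{\bvec}$ has $\qnumber{2}-\qpar^{2}=1$ and $\cmatrix_{\qpar}^{\bvec}(\zigzag^{\bvec-\ii[c]})$ has $\qnumber{2}$. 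Equivalently, $\cmatrix_{\qpar}^{\bvec}$ is obtained from $\cmatrix_{\qpar}^{\bvec}(\zigzag^{\bvec-\ii[c]})$ by subtracting $\qpar^{2}$ from the $\ii[c]$-th diagonal entry.

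\emph{Step 2: expand the determinant and identify the minor.} Using multilinearity of the determinant in the $\ii[c]$-th row (equivalently, column) together with Step 1, one gets
\[
\mathrm{det}(\cmatrix_{\qpar}^{\bvec})
=
\mathrm{det}(\cmatrix_{\qpar}^{\bvec}(\zigzag^{\bvec-\ii[c]}))
-
\qpar^{2}\cdot m,
\]
where $m$ is the principal $\ii[c]$-$\ii[c]$ minor of $\cmatrix_{\qpar}^{\bvec}(\zigzag^{\bvec-\ii[c]})$, i.e. the determinant of the matrix obtained by deleting its $\ii[c]$-th row and column; the cofactor sign is $(-1)^{\ii[c]+\ii[c]}=+1$, so there is nothing to track there. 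Deleting that row and column from $\qnumber{2}\imatrix-\qpar^{2}\ematrix_{\bvec-\ii[c]}+\qpar\amatrix(\Gg)$ leaves $\qnumber{2}\imatrix-\qpar^{2}\ematrix_{\bvec-\ii[c]}+\qpar\amatrix(\Gg-\ii[c])$, now indexed by the vertices of $\Gg-\ii[c]$: the identity restricts to the identity, by the definition of vertex deletion $\amatrix(\Gg)$ loses precisely its $\ii[c]$-th row and column to become $\amatrix(\Gg-\ii[c])$, and $\ematrix_{\bvec-\ii[c]}$ is untouched because $\ii[c]\notin\bvec-\ii[c]$. By \fullref{proposition:cartan-bzigzag} applied on $\Gg-\ii[c]$ this matrix is exactly $\cmatrix_{\qpar}^{\bvec}(\zigzag^{\bvec-\ii[c]}(\Gg-\ii[c]))$, up to the simultaneous row-and-column permutation induced by re-enumerating $\Gg-\ii[c]$, which does not change the determinant. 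Substituting yields the claimed recursion, the weight $\qpar^{2}$ on the last term being the change in the diagonal entry; when $\bvec=\{\ii[c]\}$ one reads $\zigzag^{\emptyset}=\zigzag$.

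There is no serious obstacle here: the argument is pure linear-algebra bookkeeping once \fullref{proposition:cartan-bzigzag} is in hand. The only points that genuinely need care are that the correction term is weighted by $\qpar^{2}$ — the contribution of the volume element $\volele_{\ii[c]}$ that gets killed — and that it enters with a plus sign since it is a \emph{principal} minor; one should also note that the formula of \fullref{proposition:cartan-bzigzag} is uniform in the number of vertices, so the one- and two-vertex clauses of \fullref{definition:zigzag} and a possibly disconnected $\Gg-\ii[c]$ (cf. \fullref{remark:generalization-graph}) do not interfere with the computation.
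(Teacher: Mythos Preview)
Your argument is correct and is precisely the paper's ``row expansion'' via \fullref{proposition:cartan-bzigzag}. Note that your computation (correctly) produces a factor $\qpar^{2}$ in front of the subtracted determinant, which the printed statement omits; this is a harmless slip in the paper, as the lemma is only ever applied at $\qpar=1$ (cf.\ the proof of \fullref{lemma:cell-small-b}), where the factor is invisible.
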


Note that \fullref{lemma:the-cartan-cell2} gives a recursive way to compute the 
\textit{graded Cartan determinant} $\mathrm{det}(\cmatrix_{\qpar}^{\bvec})$ 
of $\bzigzag$ from that of $\zigzag$.
Explicitly, in case $\bvec$ has just one entry $\ii[c]$, then 
$\mathrm{det}(\cmatrix_{\qpar}^{\bvec})=\mathrm{det}(\cmatrix_{\qpar})-
\mathrm{det}(\cmatrix_{\qpar}(\zigzag(\Gg-\ii[c])))$.

\begin{proof}
By using \fullref{proposition:cartan-bzigzag}, this follows
directly by row expansion.
\end{proof}

\begin{example}\label{example:btypeA}
Very similar to \fullref{example:typeA}, the most important 
example is the case of a type $\typeA[n]$ graph with vertex condition imposed 
on one of its leafs.
\begin{gather*}
\bzigzag(\typeA[n])=
\xy
(0,0)*{
\begin{tikzcd}[ampersand replacement=\&,row sep=large,column sep=scriptsize,arrows={shorten >=-.5ex,shorten <=-.5ex},labels={inner sep=.05ex}]
\ii[1]
\arrow[yshift=.5ex,<-]{r}{}
\arrow[yshift=-.5ex,->,swap]{r}{}
\&
\ii[2]
\arrow[out=60,in=120,loop,distance=1.25em,swap]{}{\loopy} 
\arrow[out=-60,in=-120,loop,distance=1.25em]{}{\loopy[t]}
\arrow[yshift=.5ex,<-]{r}{}
\arrow[yshift=-.5ex,->,swap]{r}{}
\&
\cdots
\arrow[yshift=.5ex,<-]{r}{}
\arrow[yshift=-.5ex,->,swap]{r}{}
\&
\ii[n{-}1]
\arrow[out=60,in=120,loop,distance=1.25em,swap]{}{\loopy} 
\arrow[out=-60,in=-120,loop,distance=1.25em]{}{\loopy[t]}
\arrow[yshift=.5ex,<-]{r}{}
\arrow[yshift=-.5ex,->,swap]{r}{}
\&
\phantom{1}\ii[n]\phantom{1}
\arrow[out=60,in=120,loop,distance=1.25em,swap]{}{\loopy} 
\arrow[out=-60,in=-120,loop,distance=1.25em]{}{\loopy[t]}
\\
\end{tikzcd}};
(0,14)*{\text{{\tiny $\bvec=\{\ii[1]\}$}}};
(0,-10)*{\text{{\tiny living on the type $\typeA[n]$ graph}}};
\endxy,
\end{gather*}
where we have illustrated the case $\bvec=\{\ii[1]\}$.
\end{example}
%
\section{Cellularity}\label{section:cell}

\subsubsection{A brief reminder}\label{subsubsec:rel-cell}

We briefly recall the definition of a relative cellular algebra 
as it appears in \cite[Definition 2.1]{et-relcell}, sneaking in the 
graded setting as in \cite[Definition 2.1]{hm-klr-basis}.

\begin{definition}\label{definition:cell-algebra}
A \textit{relative cellular algebra} is an associative, unital 
algebra $\calg$ together with a (relative) cell datum, i.e.
\begin{gather*}
(\cset,\cmset,\cbasis,
{}^{\invo},
\ciset,\coset,\epsilon),
\end{gather*}
such that the following hold.
\smallskip
\begin{enumerate}

\setlength\itemsep{.15cm}

\renewcommand{\theenumi}{(\ref{definition:cell-algebra}.a)}
\renewcommand{\labelenumi}{\theenumi}

\item \label{enum:cell1} $\cset$ is a set 
and $\cmset=\{\cmset(\lambda)\mid\lambda\in\cset\}$ 
a collection of finite sets such that 
\[
\cbasis(\placeholder,\placeholder)\colon 
{\textstyle\coprod_{\lambda\in\cset}}\,
\cmset(\lambda)\times\cmset(\lambda)\rightarrow\calg
\]
is an injective map with image forming a basis of $\calg$.  
For $S,T\in\cmset(\lambda)$ we write 
$\cbasis(S,T)=\cbas{S,T}{\lambda}$ from now on.

\renewcommand{\theenumi}{(\ref{definition:cell-algebra}.b)}
\renewcommand{\labelenumi}{\theenumi}

\item \label{enum:cell2} ${}^{\invo}$ is an anti-involution 
${}^{\invo}\colon\calg\to\calg$ such that 
$(\cbas{S,T}{\lambda})^\invo=\cbas{T,S}{\lambda}$.

\renewcommand{\theenumi}{(\ref{definition:cell-algebra}.c)}
\renewcommand{\labelenumi}{\theenumi}

\item \label{enum:cell3} 
$\ciset$ is a set of pairwise orthogonal, non-zero
idempotents, all fixed by 
${}^{\invo}$, i.e. $\ceps^{\invo}=\ceps$ for all $\ceps\in\ciset$.
Further, $\coset=\{\ord{\ceps}\mid\ceps\in\ciset\}$ 
is a set
of partial orders $\ord{\ceps}$ on $\cset$, and  
$\epsilon$ is a map 
$\epsilon\colon\coprod_{\lambda\in\cset}
\cmset(\lambda)\rightarrow\ciset$ sending $S$ 
to $\epsilon(S)=\ceps_{S}$ such that
\\
\noindent\begin{tabularx}{0.9\textwidth}{XX}
\begin{equation}\hspace{-7.5cm}\label{eq:idem-props-1}
\ceps\calg\ceps\,\cbas{S,T}{\lambda} \in \calg(\Ord{\ceps}\!\lambda),
\phantom{\begin{cases}
a,
\\
b,
\end{cases}}\hspace*{-1cm}
\end{equation} &
\begin{equation}\hspace{-6.5cm}\label{eq:idem-props-2}
\ceps\cbas{S,T}{\lambda}=
\begin{cases}
\cbas{S,T}{\lambda}, &\text{if }\ceps_{S}=\ceps,
\\
0, &\text{if }\ceps_{S}\neq\ceps,
\end{cases}
\end{equation}
\end{tabularx}\\
for all $\lambda\in\cset$, $S,T\in\cmset(\lambda)$ and $\ceps \in \ciset$.

\vspace*{.15cm}

\renewcommand{\theenumi}{(\ref{definition:cell-algebra}.d)}
\renewcommand{\labelenumi}{\theenumi}

\item \label{enum:cell4} For $\lambda\in\cset$, 
$S,T\in\cmset(\lambda)$ and $a\in\calg$ we have
\begin{gather*}
a \cbas{S,T}{\lambda} 
\in
{\textstyle\sum_{S^{\prime} \in \cmset(\lambda)}}\, 
r_{a}(S^{\prime},S)\,\cbas{S^{\prime},T}{\lambda} 
+ 
\calg(\ord{\ceps_{T}}\!\lambda)\ceps_{T},
\end{gather*}
with scalars $r_{a}(S^{\prime},S)\in\K$ only depending on 
$a,S,S^{\prime}$.
\end{enumerate}
\smallskip

We call the set 
$\{\cbas{S,T}{\lambda}\mid\lambda\in\cset,S,T\in \cmset(\lambda)\}$ 
a \textit{relative cellular basis}.

In the case $\ciset=\{1\}$ we call $\calg$ a 
\textit{cellular algebra}, and we write $\ord{}=\ord{1}$.

The whole setup is called \textit{graded} if the very 
same conditions as in \cite[Definition 2.1]{hm-klr-basis} 
are satisfied (which can be 
easily adapted to the relative case).
\end{definition}

Note that the notion of a cellular algebra 
in the sense of \cite[Definition 1.1]{gl-cellular} 
coincides with our definition here as one can easily check. 
In particular, a cellular algebra
is relative cellular, but not 
conversely as will follow e.g. from \fullref{example:cell-atypeA} combined with 
\fullref{lemma:bipartite}.

\subsubsection{The crucial examples}\label{subsubsec:rel-cell-examples}

The following examples partially appeared in \cite[Section 2E]{et-relcell}.

\begin{example}\label{example:cell-typeA}
Let $\Gg$ be a type $\typeA[3]$ graph. 
Then $\zigzag$ is cellular 
and its cell datum is as follows. The anti-involution ${}^{\invo}$ 
is the linear extension of 
the assignment which swaps source and target of paths.
Further, let $\cset=\{\ii[0]\ord{}\ii[1]\ord{}\ii[2]\ord{}\ii[3]\}$, 
with $\ii[0]$ playing the role of a dummy, and let
\[
\cmset(\ii[0])=\{\pathx{1}{2}\},
\quad
\cmset(\ii[1])=\{\ii[1],\pathx{2}{1}\},
\quad
\cmset(\ii[2])=\{\ii[2],\pathx{3}{2}\},
\quad
\cmset(\ii[3])=\{\ii[3]\},
\]
which determines the cells $\cmset(\ii)\times\cmset(\ii)$ 
by $\cbas{S,T}{\ii}=S\circ T^{\invo}$ for $(S,T)\in\cmset(\ii)\times\cmset(\ii)$. 
Clearly, this is 
a graded structure. Imposing the vertex condition 
$\bvec=\{\ii[1]\}$ gives a cellular structure for $\bzigzag$, but without 
any dummy.
\end{example}

\begin{example}\label{example:cell-atypeA}
Let $\Gg$ be a type $\atypeA[2]$ graph. 
Then $\zigzag$ is relative cellular 
and its cell datum is almost the same as in \fullref{example:cell-typeA}. 
The crucial differences are that there is no dummy cell, and 
one has two idempotents $\ciset=\{\ii[0],\ceps=\ii[1]+\ii[2]\}$ 
where the orderings are the same 
as in the finite case for $\ceps$, and the opposite for $\ii[0]$.
\end{example}

\subsection{The case of \texorpdfstring{$\zigzag$}{Z}}\label{subsec:rel-cell-zigzag}

\subsubsection{Construction}\label{subsubsec:cell-construction}

For any $\Gg$, let us define an anti-involution ${}^{\invo}\colon\zigzag\to\zigzag$ by
\begin{gather*}
\ii^{\invo}=\ii,
\quad\quad
\pathx{i}{j}^{\invo}=\pathx{j}{i},
\text{ for }\upathx{i}{j},
\end{gather*}
which determines ${}^{\invo}$ completely. 
Further, we will always use the rule
\[
\cbas{S,T}{\ii}=S\circ T^{\invo},\;\text{for }
(S,T)\in\cmset(\ii)\times\cmset(\ii),
\]
to give the cells $\cmset(\ii)\times\cmset(\ii)$ which 
we are going to define now in the cases where $\Gg$ is 
a finite or affine type $\typeA[]$ graph.

For type $\typeA[n]$ let 
the indexing set be 
$\cset=\{\ii[0]\ord{}\ii[1]\ord{}\dots\ord{}\ii[n]\}$ with the index $\ii[0]$ 
playing the role of a dummy. The cell sets $\cmset(\ii)$ are
\[
\cmset(\ii[0])=\{\pathx{1}{2}\},
\quad\quad
\cmset(\ii)=\{\ii,\pathx{i{+}1}{i}\},\text{ for }\ii\notin\{\ii[0],\ii[n]\},
\quad\quad
\cmset(\ii[n])=\{\ii[n]\}.
\]
The path length grading gives a way to view this datum as 
a graded datum by assigning to each element of $\cmset(\ii)$ 
the corresponding path length degree. 

For type $\atypeA[n]$ the cell datum is almost the 
same. The crucial differences are that the cells are now all of size four, and 
one has two idempotents $\ciset=\{\ceps=\ii[1]+\dots+\ii[n{-}1],\ii[n]\}$ 
where the orderings are the same 
as in the finite case for $\ceps$, and the opposite for $\ii[n]$. 

\begin{proposition}\label{proposition:cell-construction}
The above defines the structure of 
a graded relative cellular algebra on $\zigzag$. 
This structure is the structure of a 
graded cellular 
algebra in case $\Gg$ is of type $\typeA[n]$.  
\end{proposition}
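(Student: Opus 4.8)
The plan is to verify the defining conditions (\ref{definition:cell-algebra}.a)--(\ref{definition:cell-algebra}.d) of \fullref{definition:cell-algebra} directly, for both the finite and affine type $\typeA[]$ cases simultaneously, then specialize to the observation that the affine datum has $\ciset=\{1\}$ if and only if we are in the finite case (where the single non-dummy idempotent set collapses). First I would write down the proposed basis explicitly: for each vertex $\ii$, the set $\{\cbas{S,T}{\ii}\mid S,T\in\cmset(\ii)\}$, using $\cbas{S,T}{\ii}=S\circ T^{\invo}$. In the finite $\typeA[n]$ case this produces, for the dummy $\ii[0]$, the single element $\pathx{1}{2}\circ\pathx{2}{1}=\volele_{\ii[1]}$ (using \ref{enum:zigzag-3}); for a non-leaf, non-dummy $\ii$, the four elements $\ii$, $\pathx{i}{i+1}$ (i.e. $(\pathx{i+1}{i})^{\invo}$), $\pathx{i+1}{i}$, and $\pathx{i+1}{i}\circ\pathx{i}{i+1}=\volele_{\ii}$; and for $\ii[n]$ just the idempotent $\ii[n]$. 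Counting these and comparing with \fullref{lemma:basics3} (every $\Hom{\zigzag}(\ii,\ii[j])$ has a known basis, and summing graded dimensions gives $\dim\zigzag$) shows the proposed set has the right cardinality; injectivity of $\cbasis(\placeholder,\placeholder)$ and the spanning property then follow because the listed elements are exactly the path-basis of $\zigzag$ (with $\volele_{\ii}$ appearing once, pinned down to the cell of the appropriate smaller label). This settles (\ref{definition:cell-algebra}.a). Condition (\ref{definition:cell-algebra}.b) is immediate from the definition of ${}^{\invo}$ and $\cbas{S,T}{\ii}=S\circ T^{\invo}$, since $(S\circ T^{\invo})^{\invo}=T\circ S^{\invo}$.

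For (\ref{definition:cell-algebra}.c), in the cellular (finite) case $\ciset=\{1\}$ so \eqref{eq:idem-props-2} is vacuous and \eqref{eq:idem-props-1} reduces to the two-sided ideal condition $\zigzag\,\cbas{S,T}{\ii}\,\zigzag\subseteq\zigzag(\Ord{}\ii)$, which I would check by a short computation: left-multiplying $S\circ T^{\invo}$ by any path either annihilates it, returns a scalar multiple of an element with the same or smaller label (the volume element $\volele_{\ii}$ sits in the ideal generated by $\ii$ and also by any neighbour, but we have assigned it to the cell of label $\ii[i]$ for the appropriate neighbour $\ii$ via $\epsilon$, and the boundedness relation \ref{enum:zigzag-1} forces everything of length $\geq 2$ through at most one intermediate vertex), so the ideal stays within $\Ord{}\ii$. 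In the affine case one additionally checks \eqref{eq:idem-props-2}: the map $\epsilon$ sends a cell element supported at vertex $\ii$ to $\ceps$ if $\ii\neq\ii[n]$ and to $\ii[n]$ otherwise, and $\ceps\cdot(S\circ T^{\invo})$ equals $S\circ T^{\invo}$ exactly when the source idempotent of $S$ lies in $\{\ii[1],\dots,\ii[n-1]\}$, which is the definition of $\ceps_S=\ceps$; the reversed ordering $\ord{\ii[n]}$ on $\cset$ is precisely what makes \eqref{eq:idem-props-1} hold for the second idempotent, since passing through $\ii[n]$ is now ``going up''. Condition (\ref{definition:cell-algebra}.d) is the straightening rule: for $a\in\zigzag$ a path and $\cbas{S,T}{\ii}=S\circ T^{\invo}$, the product $a\cdot S$ is either $0$, or a scalar times $S'$ for some $S'\in\cmset(\ii)$ (when $a$ is a vertex idempotent or a degree-one path landing back in $\cmset(\ii)$), or a volume element $\volele$, which lies in $\zigzag(\ord{\ceps_T}\ii)\ceps_T$ — here one uses \ref{enum:zigzag-3} to rewrite $\volele$ through the neighbour of strictly smaller label and the Frobenius/socle structure from \eqref{eq:loewy} to see it is genuinely in the lower ideal times the correct idempotent. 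Crucially the scalars $r_a(S',S)$ depend only on $a,S,S'$ and not on $T$, because post-composition with $T^{\invo}$ never interferes with pre-composition by $a$ in this zigzag setting.

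The main obstacle is bookkeeping the volume elements $\volele_{\ii}$ correctly: each one is a length-two path that can be written through two different neighbours (and equals $\pathxx{i}{j}{i}$ up to the sign/loop identification of \ref{enum:zigzag-2}--\ref{enum:zigzag-3}), so one must commit — via the dummy cell $\ii[0]$ in finite type, and via the chosen linear order in affine type — to exactly which lower cell each $\volele_{\ii}$ belongs, and then check that \eqref{eq:idem-props-1} and (\ref{definition:cell-algebra}.d) are consistent with that choice at every vertex. For finite $\typeA[n]$ the dummy $\ii[0]$ absorbs $\volele_{\ii[1]}$ and each interior cell absorbs its own $\volele$ via the path $\pathx{i+1}{i}\circ\pathx{i}{i+1}$, with the socle picture \eqref{eq:loewy} guaranteeing the lower-ideal containments; for $\atypeA[n]$ the cyclic symmetry means there is no room for a dummy, forcing the two-idempotent relative structure, and I would verify the orientation-reversal at $\ii[n]$ makes both halves of \eqref{eq:idem-props-1} work. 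Once the finite case is done, the affine case is genuinely ``the same proof'' with $\ciset$ enlarged, which is why the proposition is stated as a single statement. I expect the whole argument to be a careful but elementary case check, with \fullref{lemma:basics3}, \fullref{proposition:cartan-zigzag}, and the Loewy picture \eqref{eq:loewy} doing all the structural heavy lifting.
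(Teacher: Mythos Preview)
Your approach --- direct verification of axioms \ref{enum:cell1}--\ref{enum:cell4}, using \fullref{lemma:basics3} for \ref{enum:cell1}, the definition of ${}^{\invo}$ for \ref{enum:cell2}, and explicit path computations for \ref{enum:cell3} and \ref{enum:cell4} --- is exactly what the paper does, so the core is correct. Two small remarks.

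First, a labeling slip: for an interior $\ii$, the degree-two element in its cell is $(\pathx{i{+}1}{i})\circ(\pathx{i}{i{+}1})=\pathxx{i{+}1}{i}{i{+}1}=\volele_{\ii[i{+}1]}$, not $\volele_{\ii}$. So the cell indexed by $\ii$ contributes the volume element at vertex $\ii[i{+}1]$; the dummy $\ii[0]$ contributes $\volele_{\ii[1]}$, and the cell $\ii[n]$ contributes no volume element. This shift-by-one does not affect your argument, only the names.

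Second, the paper organises the two cases the other way round: it verifies the axioms only for $\atypeA[n]$ directly (checking \eqref{eq:idem-props-1} via the observation that e.g.\ $\ii[n]\zigzag\ii[n]=\K\{\ii[n],\volele_{\ii[n]}\}$), and then deduces the $\typeA[n]$ case from the general fact that idempotent truncations of relative cellular algebras remain relative cellular, \cite[Proposition~2.8]{et-relcell}, together with \cite[Example~2.3]{et-relcell} for the upgrade from relative cellular to genuinely cellular. This saves one case-check at the price of importing an external result; your direct treatment of both cases is more self-contained and equally valid.
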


\begin{proof}
We only prove the claim in case $\Gg$ is
a type $\atypeA[n]$ graph, the rest follows then 
directly from the fact that idempotent 
truncations of relative cellular algebras 
are relative cellular, cf. \cite[Proposition 2.8]{et-relcell}, 
as well as \cite[Example 2.3]{et-relcell}.
First, \ref{enum:cell1} follows from \fullref{lemma:basics3}, 
while \ref{enum:cell2} is immediate. The only statement 
from \ref{enum:cell3} which needs to be checked is 
\eqref{eq:idem-props-1} which follows since e.g. 
$\ii[n]\zigzag\ii[n]=\K\{\ii[n],\volele_{\ii[n]}\}$. The 
last condition \ref{enum:cell4} is verified via a small calculation.
The statement about the grading is immediate.
\end{proof}

\subsubsection{Elimination}\label{subsubsec:rel-cell-elim}

The crucial lemma, which is a consequence of 
\cite[Proposition 3.2]{kx-cellular} and 
\cite[Corollary 3.25]{et-relcell}, is:

\begin{lemmaqed}\label{lemma:the-cartan}
If $\zigzag$ is cellular, then 
its Cartan matrix $\cmatrix$ is positive definite. 
If $\zigzag$ is relative cellular, 
then $\cmatrix$ is positive semidefinite.
\end{lemmaqed}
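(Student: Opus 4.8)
The plan is to reduce both statements to standard facts about (relative) cellular algebras via the decomposition matrix. Recall that for a cellular algebra $\calg$ over a field $\K$ with cell datum indexed by $\cset$, there are cell modules $\{C^\lambda\}_{\lambda\in\cset}$ and, for the subset $\cset_0\subseteq\cset$ of those $\lambda$ for which the simple head $D^\lambda$ is nonzero, the decomposition matrix $\dmatrix=([C^\lambda:D^\mu])_{\lambda\in\cset,\mu\in\cset_0}$ satisfies the classical identity $\cmatrix=\dmatrix^{T}\dmatrix$ for the Cartan matrix $\cmatrix$; this is exactly \cite[Proposition 3.2]{kx-cellular}. For a relative cellular algebra the analogous statement, with the relative cell modules and the relative decomposition matrix, is \cite[Corollary 3.25]{et-relcell}, and there the matrix $\dmatrix$ need not be square but one still has $\cmatrix=\dmatrix^{T}\dmatrix$.

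First I would invoke these two cited results to write $\cmatrix=\dmatrix^{T}\dmatrix$ in both cases. Since $\dmatrix$ has entries in $\N\subseteq\R$, for any real vector $v$ we immediately get $v^{T}\cmatrix v=v^{T}\dmatrix^{T}\dmatrix v=\lVert\dmatrix v\rVert^{2}\geq 0$, so $\cmatrix$ is positive semidefinite; this already disposes of the relative cellular case. For the genuinely cellular case I would then upgrade semidefinite to definite: here $\dmatrix$ is a square matrix (indexed by $\cset$ in both rows and columns once we discard dummy indices, i.e. $\lambda$ with $D^\lambda=0$ contribute zero rows that can be dropped), and because the simples $D^\mu$ are pairwise non-isomorphic and each cell module $C^\mu$ ($\mu\in\cset_0$) has $[C^\mu:D^\mu]=1$ while $[C^\mu:D^\nu]=0$ unless $\nu\order\mu$ — the standard upper-unitriangularity of the decomposition matrix of a cellular algebra — the matrix $\dmatrix$ is unitriangular with respect to the cell order, hence invertible over $\K$ and a fortiori over $\R$. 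Then $\dmatrix v=0\Rightarrow v=0$, so $v^{T}\cmatrix v=\lVert\dmatrix v\rVert^{2}>0$ for $v\neq 0$, i.e. $\cmatrix$ is positive definite.

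The only subtlety — and the step I expect to be the main obstacle — is matching conventions: making sure that the "Cartan matrix" appearing in \fullref{proposition:cartan-zigzag} (the one encoding composition multiplicities of projectives by simples) is literally the matrix $\dmatrix^{T}\dmatrix$ produced by \cite{kx-cellular} and \cite{et-relcell}, rather than its transpose or a graded variant, and that the cell order used there is a partial order (for which unitriangularity still forces invertibility, since one can refine it to a total order). Since $\cmatrix(\zigzag)=2\imatrix+\amatrix$ is symmetric, transpose ambiguities are harmless here; and the grading is irrelevant for definiteness, so one simply sets $\qpar=1$ in \fullref{proposition:cartan-zigzag}. Once these bookkeeping points are checked the lemma follows with no further computation, which is why the proof in the paper can be omitted.
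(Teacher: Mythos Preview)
Your approach is exactly the paper's: the lemma is recorded there without proof, as a direct consequence of \cite[Proposition~3.2]{kx-cellular} and \cite[Corollary~3.25]{et-relcell}, and you have correctly unpacked why $\cmatrix=\dmatrix^{\mathrm{T}}\dmatrix$ forces positive (semi)definiteness.

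One small correction to your write-up: in the cellular case $\dmatrix$ is a $|\cset|\times|\cset_{0}|$ matrix and is \emph{not} square in general, and the rows indexed by $\lambda\in\cset\setminus\cset_{0}$ are typically \emph{not} zero (the cell module $C^{\lambda}$ still has composition factors among the $D^{\mu}$). What you actually need, and what the standard unitriangularity statement gives, is that the $|\cset_{0}|\times|\cset_{0}|$ submatrix of $\dmatrix$ obtained by restricting the row index to $\cset_{0}$ is unitriangular with respect to (any linear refinement of) the cell order; this already forces $\dmatrix$ to have full column rank, hence $\dmatrix v=0\Rightarrow v=0$ and $\cmatrix$ is positive definite. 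With that adjustment your argument is complete.
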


For any relative cellular algebra,
recall that for each $\lambda\in\cset$ there is 
an associated \textit{cell module} $\ldmod[\lambda]$. 
Define the \textit{decomposition numbers} $d_{\mu\lambda}$ 
as the multiplicity of $\lsimple[\lambda]$ in $\ldmod[\mu]$, 
where $\lambda\in\cset_{0}$ correspond to idempotent cells. In particular, $d_{\mu\lambda}\in\N$.
In the case of $\zigzag$ 
we will identify $\cset_{0}=\{\ii\mid\ii\text{ vertex of }\Gg\}$, and
if we want to sort these numbers into a matrix $\dmatrix$, then 
we will always enumerate columns as indicated by the enumeration of the vertices. 
Further, we enumerate the rows by first using the elements from $\cset_{0}$ 
(in the enumeration of the vertices) followed by the remaining elements 
with a fixed, arbitrary enumeration.
By \cite[Proposition 3.6]{gl-cellular} or \cite[Theorem 3.23]{et-relcell} we get:

\begin{lemmaqed}\label{lemma:the-cartan-DD}
If $\zigzag$ is relative cellular, 
then $\cmatrix=\dmatrix^{\mathrm{T}}\dmatrix$ where $d_{\mu\ii}=0$ unless 
$\mu\ord{\ceps(\ii)}\ii$, and $d_{\ii\ii}=1$, where $\ceps(\ii)$ is 
the unique idempotent in $\ciset$ with $\ceps(\ii)\ii=\ii$.
\end{lemmaqed}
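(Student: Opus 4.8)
The plan is to obtain the statement as a direct instance of the general structure theory for relative cellular algebras; the only real work is to match conventions with the cell datum fixed in \fullref{proposition:cell-construction}, which already tells us that $\zigzag$ is relative cellular (and genuinely cellular in finite type $\typeA[]$), so that the machinery of \cite{gl-cellular} and \cite[Section 3]{et-relcell} applies.

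First I would recall the relevant objects. For each $\lambda\in\cset$ there is a cell module $\ldmod[\lambda]$, spanned by $\cmset(\lambda)$ with the left $\zigzag$-action induced from the cellular basis, carrying a canonical bilinear form $\phi_{\lambda}$; for $\lambda\in\cset_{0}$ the form is non-zero and $\lsimple[\lambda]=\ldmod[\lambda]/\mathrm{rad}\,\phi_{\lambda}$ is a simple module, the family $\{\lsimple[\lambda]\mid\lambda\in\cset_{0}\}$ being a complete, irredundant list of the simple $\zigzag$-modules. In our situation $\cset_{0}$ is identified with the vertex set of $\Gg$ as fixed before the lemma, so $\lsimple$ is the simple appearing in \eqref{eq:loewy}, $\lproj$ is its projective cover, $\ceps(\ii)\in\ciset$ is the unique idempotent with $\ceps(\ii)\ii=\ii$, and $\ord{\ceps(\ii)}$ is the corresponding order from $\coset$.

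The heart of the argument is the cell filtration of projectives, that is, the relative-cellular analogue of BGG reciprocity: slicing the part of the cellular basis of $\zigzag$ lying in $\lproj$ according to the right index exhibits a filtration of $\lproj$ whose subquotients are cell modules, with $\ldmod[\mu]$ occurring exactly $[\lproj:\ldmod[\mu]]=d_{\mu\ii}$ times. This is \cite[Proposition 3.6]{gl-cellular} in the classical case and \cite[Theorem 3.23]{et-relcell} in the relative one. Granting it, reading composition multiplicities off this filtration gives, for vertices $\ii$ and $\ii[j]$,
\[
(\cmatrix)_{\ii[j]\ii}
=[\lproj:\lsimple[j]]
=\sum_{\mu\in\cset}[\lproj:\ldmod[\mu]]\cdot[\ldmod[\mu]:\lsimple[j]]
=\sum_{\mu\in\cset}d_{\mu\ii}\,d_{\mu\ii[j]},
\]
which is exactly the $(\ii[j],\ii)$-entry of $\dmatrix^{\mathrm{T}}\dmatrix$; hence $\cmatrix=\dmatrix^{\mathrm{T}}\dmatrix$. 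The triangularity $d_{\mu\ii}=0$ unless $\mu\Ord{\ceps(\ii)}\ii$, together with $d_{\ii\ii}=1$, is the usual unitriangularity of decomposition matrices of cellular algebras: the cell filtration of $\ldmod[\mu]$ forces every composition factor $\lsimple[\lambda]$ of $\ldmod[\mu]$ to satisfy $\mu\Ord{\ceps(\lambda)}\lambda$, while $\ldmod[\ii]$ has $\lsimple$ as simple top of multiplicity one, by non-degeneracy of $\phi_{\ii}$ at the distinguished generator of $\ldmod[\ii]$.

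The one point that genuinely needs care — and which I would write out in full — is checking that $\zigzag$ equipped with the datum of \fullref{proposition:cell-construction} satisfies the precise hypotheses under which \cite[Theorem 3.23]{et-relcell} (respectively \cite[Proposition 3.6]{gl-cellular}) is proved, and translating their orderings on $\cset$, on $\cset_{0}$ and on the rows and columns of $\dmatrix$ into the conventions fixed just before the lemma; beyond this bookkeeping there is no further obstacle, since the cell datum is already in hand. As a consistency check one notes that $\dmatrix^{\mathrm{T}}\dmatrix$ is automatically positive semidefinite, and positive definite exactly when $\dmatrix$ has full column rank, which recovers \fullref{lemma:the-cartan}.
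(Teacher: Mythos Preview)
Your approach is essentially the paper's: the lemma is stated there without proof, as a direct consequence of \cite[Proposition 3.6]{gl-cellular} and \cite[Theorem 3.23]{et-relcell}, and your expansion of the BGG-type reciprocity argument is correct and matches those references.

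There is, however, a conceptual slip in your framing. You write that ``the only real work is to match conventions with the cell datum fixed in \fullref{proposition:cell-construction}''. But the hypothesis of the lemma is \emph{not} that $\zigzag$ carries the specific cell datum constructed there (which only exists for finite or affine type $\typeA[]$ graphs). Rather, the lemma assumes that $\zigzag$ admits \emph{some} relative cellular structure, possibly unknown; the conclusion must then hold for the decomposition matrix $\dmatrix$ of that hypothetical datum. This is essential, since the lemma is used contrapositively in the elimination arguments (\fullref{lemma:odd-circles}, \fullref{lemma:cell-d4}, etc.) to show that \emph{no} cell datum can exist for certain graphs. Fortunately, the results you cite from \cite{gl-cellular} and \cite{et-relcell} are general statements valid for any (relative) cellular algebra, so your argument goes through once you drop the reference to \fullref{proposition:cell-construction} and work with an arbitrary assumed cell datum. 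The bookkeeping you mention --- identifying $\cset_{0}$ with the vertex set and checking that the simples $\lsimple[\lambda]$ arising from the abstract theory coincide with the one-dimensional modules $\lsimple$ of \fullref{subsubsec:zigzag-modules} --- is still needed, but it must be carried out for the hypothetical datum, not the constructed one.
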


The following lemma and the 
ideas in its proof, 
will reappear throughout.

\begin{lemma}\label{lemma:bipartite}
If $\Gg$ is a bipartite graph which is not 
an $\typeADE$ graph, then $\zigzag$ 
is not cellular. If $\Gg$ is a bipartite graph which is not 
a finite or affine $\typeADE$ graph, then $\zigzag$ 
is not relative cellular.
\end{lemma}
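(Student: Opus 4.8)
The plan is to read off non-(relative-)cellularity from the Cartan matrix via \fullref{lemma:the-cartan}, together with the classical classification of connected simple graphs by spectral radius. By \fullref{proposition:cartan-zigzag} (after forgetting the grading) the Cartan matrix is $\cmatrix=2\imatrix+\amatrix$, a real symmetric matrix. Hence $\cmatrix$ is positive definite if and only if every eigenvalue of $\amatrix$ is strictly greater than $-2$, and positive semidefinite if and only if every eigenvalue of $\amatrix$ is $\geq -2$. So everything comes down to the smallest eigenvalue $\lambda_{\min}(\amatrix)$.

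This is exactly where the bipartite hypothesis is used. If $\Gg$ is bipartite, then conjugating $\amatrix$ by the diagonal $\pm 1$-matrix recording the two-colouring turns $\amatrix$ into $-\amatrix$; thus the spectrum of $\amatrix$ is symmetric about $0$, and since $\Gg$ is connected one gets $\lambda_{\min}(\amatrix)=-\rho(\amatrix)$, where $\rho(\amatrix)$ is the spectral radius (equivalently the Perron--Frobenius eigenvalue). Consequently $\cmatrix$ is positive definite exactly when $\rho(\amatrix)<2$, and positive semidefinite exactly when $\rho(\amatrix)\leq 2$.

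I would then invoke the classification of connected simple graphs with spectral radius at most $2$ (Smith's theorem): one has $\rho(\amatrix)<2$ if and only if $\Gg$ is a finite type $\typeADE$ graph, and $\rho(\amatrix)=2$ if and only if $\Gg$ is an affine type $\typeADE$ graph. Putting the pieces together: if $\Gg$ is bipartite but not a finite $\typeADE$ graph, then $\rho(\amatrix)\geq 2$, so $\cmatrix$ fails to be positive definite, and \fullref{lemma:the-cartan} forbids $\zigzag$ from being cellular. If in addition $\Gg$ is not an affine $\typeADE$ graph, then $\rho(\amatrix)>2$, so $\cmatrix$ is not even positive semidefinite, and \fullref{lemma:the-cartan} forbids $\zigzag$ from being relative cellular.

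I do not anticipate a genuine obstacle: the argument is a short deduction from \fullref{lemma:the-cartan}, \fullref{proposition:cartan-zigzag} and Smith's theorem. The only point that deserves care is the role of bipartiteness, which is precisely what pins $\lambda_{\min}(\amatrix)$ to $-\rho(\amatrix)$; for a non-bipartite connected graph Perron--Frobenius gives only the strict inequality $\lambda_{\min}(\amatrix)>-\rho(\amatrix)$, so those graphs are not caught by this criterion and must be handled separately.
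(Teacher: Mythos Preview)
Your proposal is correct and follows essentially the same approach as the paper: both arguments combine $\cmatrix=2\imatrix+\amatrix$ from \fullref{proposition:cartan-zigzag}, Smith's classification of graphs with spectral radius at most $2$, the symmetry of the spectrum under bipartiteness, and \fullref{lemma:the-cartan}. Your write-up is slightly more explicit about why bipartiteness forces $\lambda_{\min}(\amatrix)=-\rho(\amatrix)$ (via conjugation by the diagonal $\pm 1$-matrix), whereas the paper simply asserts this; your closing remark that the non-bipartite case needs a separate argument is exactly right and mirrors what the paper does next.
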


\begin{proof}
Let $\Gg$ be bipartite. Then we claim the following, where we added the (graded) Cartan 
determinants for later use, since they can be computed using the numerical data 
given in the references below. 
\medskip

\noindent \textit{Claim.} $\cmatrix$ is positive definite 
if and only if $\Gg$ is a type $\typeADE$ graph.
$\cmatrix$ is positive semidefinite if and 
only if $\Gg$ is a finite or affine type $\typeADE$ graph. 
Further, in these cases
\begin{gather}\label{eq:the-data}
\begin{gathered}
\renewcommand{\arraystretch}{1.5}
\begin{tabular}{c|c|c|c|c|c}
 & $\typeA$ & $\typeD$ & $\typeE[6]$ & $\typeE[7]$ & $\typeE[8]$ \\ 
\hline 
$\mathrm{det}$ & $n+1$ & $4$ & $3$ & $2$ & $1$ \\ 
\hline 
$\qpar\mathrm{det}$ & $\qnumber{(n+1)}$ & $(1+\qpar^{2n{-}2})\qnumber{2}$ 
& $\begin{gathered}\phantom{\int}\!\!(1+\qpar^{10})\qnumber{2}\\ -\qpar^{6}\qnumber{1}\end{gathered}$ 
& $\begin{gathered}(1+\qpar^{12})\qnumber{2}\\ -\qpar^{6}\qnumber{2}\end{gathered}$ 
& $\begin{gathered}(1+\qpar^{14})\qnumber{2}\\ -\qpar^{6}\qnumber{3}\end{gathered}$ \\ 
\end{tabular}
\\
\renewcommand{\arraystretch}{1.5}
\begin{tabular}{c|c|c|c|c}
$\atypeA$ & $\atypeD$ & $\atypeE[6]$ & $\atypeE[7]$ & $\atypeE[8]$ \\ 
\hline 
$4\text{ or }0$ & $0$ & $0$ & $0$ & $0$ \\ 
\hline 
$\begin{gathered}\qpar^{2n+2}+1\\ +(-1)^{n}2\qpar^{n+1}\end{gathered}$ 
& $\begin{gathered}\phantom{\int}\!\!(1+\qpar^{2n})\qnumber{2}\\ -(\qpar^{4}+\qpar^{2n-4})\qnumber{2}\end{gathered}$ 
& $\begin{gathered}(1+\qpar^{12})\qnumber{2}\\ -(\qpar^{6}+\qpar^{6})\qnumber{2}\end{gathered}$ 
& $\begin{gathered}(1+\qpar^{14})\qnumber{2}\\ -(\qpar^{6}+\qpar^{8})\qnumber{2}\end{gathered}$ 
& $\begin{gathered}(1+\qpar^{16})\qnumber{2}\\ -(\qpar^{6}+\qpar^{10})\qnumber{2}\end{gathered}$ \\ 
\end{tabular}
\end{gathered}
\end{gather} 
are the Cartan determinants, respectively the graded 
Cartan determinants where we let $\qnumber{a}=1+\qpar^{2}+\dots+\qpar^{2a-2}+\qpar^{2a}$ 
for $a\in\Z_{\geq 0}$. 
\medskip

\noindent\textit{Proof of the claim.} 
The eigenvalues of the adjacency matrices of 
finite or affine type $\typeADE$ graphs are known, cf. \cite{sm-ADE} or 
\cite[Section 3.1.1]{bh-graphs}, 
and they all are in the interval $]-2,2[$ for the finite types, 
or in the interval $[-2,2]$ for the affine types. Moreover, by the same references,
the converse is also true: If $\Gg$ is a graph whose 
adjacency matrix has eigenvalues contained in $[-2,2]$, then $\Gg$ is a 
finite or affine type $\typeADE$ graph. In particular, all other graphs 
have a Perron--Frobenius eigenvalue strictly bigger than $2$. 
Finally, if $\Gg$ is bipartite, 
then they also have an eigenvalue strictly smaller than $-2$, and the 
claim follows by \fullref{proposition:cartan-zigzag}. 
\medskip

Then the lemma itself follows from this claim and \fullref{lemma:the-cartan}.
\end{proof}

\begin{lemma}\label{lemma:odd-circles}
If $\Gg$ is a type $\atypeA[]$ graph, then $\zigzag$ is not cellular.
\end{lemma}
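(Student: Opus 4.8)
The plan is to argue by contradiction, upgrading the numerical constraints of \fullref{lemma:the-cartan-DD} to an impossible statement about indecomposable projectives. (When $n$ is odd, $\atypeA[n]$ is bipartite and not a finite type $\typeADE$ graph, so \fullref{lemma:bipartite} already gives the claim; the argument below works uniformly, and for odd cycles it is genuinely needed, since there $\cmatrix=2\imatrix+\amatrix$ is positive definite and \fullref{lemma:the-cartan} says nothing.)

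So suppose $\zigzag=\zigzag(\atypeA[n])$ is cellular with cell poset $\cset$; identify $\cset_{0}$ with the vertex set of $\Gg$, and recall that $\ciset=\{1\}$, so a single order $\ord{}$ controls everything. By \fullref{lemma:the-cartan-DD}, $\cmatrix=\dmatrix^{\mathrm{T}}\dmatrix$ with $d_{\mu\ii}=0$ unless $\mu\ord{}\ii$, and $d_{\ii\ii}=1$. By \fullref{proposition:cartan-zigzag} each diagonal entry is $\cmatrix_{\ii\ii}=2=\sum_{\mu}d_{\mu\ii}^{2}$; since $d_{\ii\ii}=1$ and all $d_{\mu\ii}\in\N$, there is a unique $f(\ii)\neq\ii$ with $d_{f(\ii),\ii}=1$, and necessarily $f(\ii)\ord{}\ii$. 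Thus the $\ii$-th column of $\dmatrix$ has exactly the two nonzero entries $d_{\ii\ii}=d_{f(\ii),\ii}=1$; reading $\dmatrix$ by rows and using that all simple $\zigzag$-modules are one-dimensional yields $\dim\ldmod[\ii]=1+\#f^{-1}(\ii)$ for $\ii\in\cset_{0}$ and $\dim\ldmod[\mu]=\#f^{-1}(\mu)$ for $\mu\in\cset\setminus\cset_{0}$; since cell modules are non-zero, the latter forces $\#f^{-1}(\mu)\geq 1$ for all $\mu\in\cset\setminus\cset_{0}$. Finally, the $\ii$-th column of $\cmatrix=2\imatrix+\amatrix$ has entry sum $2+\deg_{\Gg}(\ii)=4$, which is $\dim\lproj$; while $\cmatrix=\dmatrix^{\mathrm{T}}\dmatrix$ also gives $\dim\lproj=\sum_{\mu}d_{\mu\ii}\dim\ldmod[\mu]=\dim\ldmod[\ii]+\dim\ldmod[f(\ii)]$. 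Hence $\dim\ldmod[\ii]+\dim\ldmod[f(\ii)]=4$ for every vertex $\ii$.

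Next I would produce two different vertices whose projectives are forced to have isomorphic radicals. The directed graph on $\cset$ with an arrow $\ii\to f(\ii)$ for each $\ii\in\cset_{0}$ is acyclic, since arrows strictly descend in $\ord{}$, and has at least one arrow, hence a source $a$; as every $\mu\in\cset\setminus\cset_{0}$ receives an arrow, $a$ lies in $\cset_{0}$, so $\#f^{-1}(a)=0$, whence $\dim\ldmod[a]=1$, i.e. $\ldmod[a]=\lsimple[a]$, and $\dim\ldmod[f(a)]=3$. By the dimension formulas $f(a)$ then has at least two $f$-preimages, so pick $b\in f^{-1}(f(a))$ with $b\neq a$; then $\dim\ldmod[b]=4-\dim\ldmod[f(b)]=4-\dim\ldmod[f(a)]=1$, so also $\ldmod[b]=\lsimple[b]$. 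Now $\lproj[a]$ carries a cell-module filtration with multiplicities $d_{\mu a}$, the cell modules of smaller index occurring at the bottom (here only $\ldmod[f(a)]$, as $f(a)\ord{}a$); since $\lproj[a]/\ldmod[f(a)]\cong\ldmod[a]=\lsimple[a]$ is the simple head of $\lproj[a]$, this gives $\operatorname{rad}(\lproj[a])\cong\ldmod[f(a)]$, and symmetrically $\operatorname{rad}(\lproj[b])\cong\ldmod[f(b)]=\ldmod[f(a)]$, hence $\operatorname{rad}(\lproj[a])\cong\operatorname{rad}(\lproj[b])$.

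Finally, by \fullref{lemma:basics2} and the Loewy picture \eqref{eq:loewy} the socle of $\lproj$ is $\lsimple$, spanned by $\volele_{\ii}$, and is contained in $\operatorname{rad}(\lproj)$, so $\operatorname{soc}(\operatorname{rad}(\lproj))=\operatorname{soc}(\lproj)=\lsimple$ for every vertex $\ii$. Then $\operatorname{rad}(\lproj[a])\cong\operatorname{rad}(\lproj[b])$ forces $\lsimple[a]\cong\lsimple[b]$, i.e. $a=b$ --- a contradiction, so $\zigzag$ is not cellular. The step I expect to be the main obstacle is exactly this passage from combinatorics to modules: the purely numerical data does not settle the odd case (for the triangle a single dummy cell of dimension $3$ already makes $\cmatrix=\dmatrix^{\mathrm{T}}\dmatrix$ consistent), so one must squeeze out a genuine structural contradiction, taking care in particular to check that the auxiliary preimage $b$ is again a source, so that $\operatorname{rad}(\lproj[b])$ really is a single cell module.
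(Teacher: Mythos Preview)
Your proof is correct, and it takes a genuinely different route from the paper's. The paper's argument splits into two cases. For $n\geq 3$ it classifies the columns of $\dmatrix$ pairwise (for adjacent vertices $\ii,\ii[j]$ one of $d_{\ii\ii[j]}=1$, $d_{\ii[j]\ii}=1$, or $f(\ii)=f(\ii[j])\in\cset\setminus\cset_{0}$ must hold), shows that for the cycle the only admissible $\dmatrix$ is essentially circulant, and then observes that no simultaneous row/column permutation makes such a matrix upper triangular, contradicting the order condition in \fullref{lemma:the-cartan-DD}. For $\atypeA[2]$ the circulant argument fails (a single dummy cell of dimension three survives numerically), and the paper rules this out with the socle argument: the three indecomposable projectives have pairwise non-isomorphic socles, so they cannot all contain the same three-dimensional cell module as a submodule.

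You have promoted exactly this socle argument to a uniform proof for all $n$, bypassing the circulant combinatorics entirely: starting from a source $a$ you locate a second vertex $b$ with $f(b)=f(a)$ and $\dim\ldmod[b]=1$, whence $\operatorname{rad}\lproj[a]\cong\ldmod[f(a)]\cong\operatorname{rad}\lproj[b]$, contradicting $\operatorname{soc}\lproj[a]=\lsimple[a]\not\cong\lsimple[b]=\operatorname{soc}\lproj[b]$. This is cleaner and handles odd and even cycles simultaneously. The paper's approach, on the other hand, makes the explicit shape of $\dmatrix$ visible and exploits the triangularity condition directly, which is closer in spirit to how cellular structure is usually verified in practice.

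One small remark: your closing worry that $b$ must be checked to be a source is already forced by your own computation, since $\dim\ldmod[b]=1$ together with $\dim\ldmod[b]=1+\#f^{-1}(b)$ gives $\#f^{-1}(b)=0$. So no separate verification is needed, and the identification $\operatorname{rad}\lproj[b]\cong\ldmod[f(b)]$ follows exactly as for $a$.
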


\begin{proof}
Assume that $\zigzag$ is cellular.
Then, by \fullref{proposition:cartan-zigzag}
and \fullref{lemma:the-cartan-DD}, we get
\[
\scalebox{.8}{$\displaystyle
\left(
\begin{array}{cccccc}
2 & 1 & 0 & 0 & \cdots & 1
\\
1 & 2 & 1 & 0 & \cdots & 0
\\
0 & 1 & 2 & 1 & \ddots & \vdots
\\
0 & 0 & 1 & 2 & \ddots & \vdots
\\
\vdots & \ddots & \ddots & \ddots & \ddots & \vdots
\\
1 & 0 & 0 & 0 & 0 & 2
\end{array}
\right)
$}
=
\cmatrix=\dmatrix^{\mathrm{T}}\dmatrix
=
\scalebox{.8}{$\displaystyle
\left(
\begin{array}{cccc}
1 & d_{21} & d_{31} & \cdots
\\
d_{12} & 1 & d_{32} & \cdots
\\
\vdots & \ddots & \ddots & \ddots
\end{array}
\right)
$}
\scalebox{.8}{$\displaystyle
\left(
\begin{array}{ccc}
1 & d_{12} & \cdots
\\
d_{21} & 1 & \ddots
\\
d_{31} & d_{32} & \ddots
\\
\vdots & \vdots & \vdots
\end{array}
\right)
$}.
\]
We observe that this implies that each column of $\dmatrix$ contains precisely 
two non-zero entries, both of which are equal to $1$. One of these is the 
number $d_{\ii\ii}$, the other we will write as $d_{a(\ii),\ii}$. 
If $\ii,\ii[j]$ are non-adjacent vertices of $\Gg$, then it now follows that 
$a(\ii)\neq a(\ii[j])$ and $d_{\ii\ii[j]}=0=d_{\ii[j]\ii}$.  
It also follows that for $\upathx{i}{j}$ we can only have three 
distinct cases:
\[
\text{(i)}\colon
d_{\ii\ii[j]}=1,
\quad\quad
\text{(ii)}\colon
d_{\ii[j]\ii}=1,
\quad\quad
\text{(iii)}\colon
a(\ii)=a(\ii[j])\in\cset-\cset_{0}.
\]
The case (iii) does not give any solution as long as we have 
four or more vertices.
In the other cases the matrix $\dmatrix$ has to be of either form
\[
\text{(i) and (ii)}\colon
\dmatrix=
\scalebox{.8}{$\displaystyle
\left(
\begin{array}{cccc}
1 & 1 & 0 & 0
\\
0 & 1 & 1 & 0
\\
0 & 0 & 1 & 1
\\
1 & 0 & 0 & 1
\end{array}
\right)
$}
\;\;\text{or}\;\;
\dmatrix=
\scalebox{.8}{$\displaystyle
\left(
\begin{array}{cccc}
1 & 0 & 0 & 1
\\
1 & 1 & 0 & 0
\\
0 & 1 & 1 & 0
\\
0 & 0 & 1 & 1
\end{array}
\right)
$}
,\quad\quad
\text{(iii)}\colon
\dmatrix=
\scalebox{.8}{$\displaystyle
\left(
\begin{array}{ccc}
1 & 0 & 0
\\
0 & 1 & 0
\\
0 & 0 & 1
\\
\hline
1 & 1 & 1
\end{array}
\right)
$}
,
\]
here exemplified for four vertices for (i) and (ii). 
However, it is impossible to permute (by simultaneous row and column reenumeration) the two leftmost matrices
into upper triangular matrices, contradicting \fullref{lemma:the-cartan-DD}, 
and the claim follows for $n\geq 3$. 
For type $\atypeA[2]$ 
we need an extra argument to rule out (iii). In this 
case we would have four cell modules 
(corresponding to the rows of $\dmatrix$), three of which are simple 
and one of dimension three containing all simples in its filtration. 
Using \eqref{eq:loewy}, this gives a contradiction to \cite[Lemmas 2.9 and 2.10]{gl-cellular}, 
since there is no way to filter the projectives by these cell modules 
in any order compatible way, because all three indecomposable projective 
modules have non-equivalent socles and thus, could not 
agree with the socle of the three-dimensional cell module.
\end{proof}

\begin{lemma}\label{lemma:non-cell}
If $\Gg$ is not a bipartite graph, then $\zigzag$ is not cellular.
\end{lemma}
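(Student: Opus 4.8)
The plan is to deduce the lemma from \fullref{lemma:odd-circles} by passing to an induced odd cycle and using an idempotent truncation. The first step is combinatorial: since $\Gg$ is not bipartite it contains an odd cycle, and a shortest odd cycle $C$ is necessarily an induced subgraph of $\Gg$. Indeed, a chord of $C$ would split it into two cycles whose lengths sum to $(\text{length of }C)+2$ and are both strictly smaller, so one of them would be an odd cycle shorter than $C$, contradicting minimality. Thus $\Gg$ contains an induced subgraph $\Gg_{0}$ isomorphic to a type $\atypeA[2k]$ graph, i.e.\ an odd cycle on $2k+1\geq 3$ vertices.

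The second step is to identify the corresponding idempotent truncation. Put $S=V(\Gg_{0})$ and $e=\sum_{\ii\in S}\ii\in\zigzag$; this is an idempotent, and it is fixed by the anti-involution ${}^{\invo}$ of \fullref{subsubsec:cell-construction} since ${}^{\invo}$ fixes all vertex idempotents. I claim $e\zigzag(\Gg)e\cong\zigzag(\Gg_{0})=\zigzag(\atypeA[2k])$ as graded algebras. Using the basis of $\zigzag(\Gg)$ from \fullref{lemma:basics3} (the idempotents $\ii$, the volume elements $\volele_{\ii}$, and the arrows $\pathx{i}{j}$ for $\upathx{i}{j}$), the subspace $e\zigzag(\Gg)e$ is spanned exactly by those basis elements all of whose endpoints lie in $S$. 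Here the boundedness relation \ref{enum:zigzag-1} is crucial: it annihilates every path through $S$ that makes a detour outside $S$, the only exceptions being paths of the form $\pathxx{i}{j}{i}$, which by \ref{enum:zigzag-3} merely reproduce volume elements already present. Since $\Gg_{0}$ is an \emph{induced} subgraph, $\upathx{i}{j}$ holds in $\Gg$ if and only if it holds in $\Gg_{0}$ for $\ii,\ii[j]\in S$, so this spanning set is precisely the basis of $\zigzag(\Gg_{0})$ given by \fullref{lemma:basics3}, and the relations \ref{enum:zigzag-1}--\ref{enum:zigzag-3} of $\zigzag(\Gg)$ restrict to those of $\zigzag(\Gg_{0})$; this yields the isomorphism, and it is manifestly graded.

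Finally, an idempotent truncation $e\calg e$ of a cellular algebra $\calg$ by an idempotent with $e^{\invo}=e$ is again cellular (a standard fact; see \cite{kx-cellular}, and compare the relative version \cite[Proposition 2.8]{et-relcell}). Hence if $\zigzag(\Gg)$ were cellular, then $\zigzag(\atypeA[2k])$ would be cellular as well, contradicting \fullref{lemma:odd-circles}; therefore $\zigzag$ is not cellular. I expect the main obstacle to be the identification $e\zigzag(\Gg)e\cong\zigzag(\atypeA[2k])$: one must check carefully that the truncation acquires no extra elements from detours, which is exactly what the boundedness relation and the induced-subgraph hypothesis prevent; the remaining two steps are routine combinatorics and a citation.
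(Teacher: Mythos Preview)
Your argument is correct and follows essentially the same route as the paper: find an induced odd cycle, identify the corresponding idempotent truncation with $\zigzag(\atypeA[2k])$, and contradict \fullref{lemma:odd-circles} via the fact that idempotent truncations of cellular algebras are cellular (the paper cites \cite[Section~6]{kx-cellular-inflations} for this). You supply more detail than the paper does---most usefully the observation that a shortest odd cycle is chordless and hence induced, which the paper leaves implicit but is exactly what is needed for the truncation $e\zigzag(\Gg)e$ to coincide with $\zigzag(\Gg_{0})$.
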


\begin{proof}
A non-bipartite graph has a subgraph of type $\atypeA[n{=}2m]$, 
and the claim follows by using \fullref{lemma:odd-circles} since idempotent truncations 
of cellular algebras are cellular by the same arguments as in
e.g. \cite[Section 6]{kx-cellular-inflations}.
\end{proof}

\begin{lemma}\label{lemma:cell-d4}
If $\Gg$ is a type $\typeD[4]$ graph, then $\zigzag$ is not relative cellular.
\end{lemma}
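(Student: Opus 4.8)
The plan is to run the same kind of argument as in the proof of \fullref{lemma:odd-circles}: assuming $\zigzag$ is relative cellular, I would extract a decomposition matrix from \fullref{lemma:the-cartan-DD} and show that the purely numerical constraints coming from the Cartan matrix of $\zigzag(\typeD[4])$ cannot be met, so that no cell datum exists. The triangularity clause of \fullref{lemma:the-cartan-DD} will turn out not to be needed; the obstruction is already visible at the level of $\cmatrix=\dmatrix^{\mathrm{T}}\dmatrix$ with $\dmatrix$ having entries in $\N$ and unit diagonal.

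Concretely, by \fullref{proposition:cartan-zigzag} the (ungraded) Cartan matrix is $\cmatrix=2\imatrix+\amatrix(\typeD[4])$; with the enumeration fixed in \fullref{example:ADE} the central vertex is $\ii[2]$ and the leaves are $\ii[1],\ii[3],\ii[4]$, so every diagonal entry of $\cmatrix$ equals $2$, the entries $\cmatrix_{\ii[2]\ii[1]}$, $\cmatrix_{\ii[2]\ii[3]}$, $\cmatrix_{\ii[2]\ii[4]}$ equal $1$, and the entries $\cmatrix_{\ii[1]\ii[3]}$, $\cmatrix_{\ii[1]\ii[4]}$, $\cmatrix_{\ii[3]\ii[4]}$ vanish. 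If $\zigzag$ were relative cellular, then $\cmatrix=\dmatrix^{\mathrm{T}}\dmatrix$ by \fullref{lemma:the-cartan-DD}, with $d_{\ii\ii}=1$. Exactly as in \fullref{lemma:odd-circles}, comparing each diagonal entry ($=2$) with $d_{\ii\ii}=1$ forces every column of $\dmatrix$ to have precisely two nonzero entries, both equal to $1$: the entry $d_{\ii\ii}$ and one further entry, in a row I will denote $a(\ii)\neq\ii$. Writing $S_{\ii}=\{\ii,a(\ii)\}$ for the two-element support of the $\ii$-th column, the relation $\cmatrix_{\ii\ii[j]}=\sum_{\mu}d_{\mu\ii}d_{\mu\ii[j]}$ translates, all summands being nonnegative, into $S_{\ii}\cap S_{\ii[j]}=\emptyset$ when $\cmatrix_{\ii\ii[j]}=0$, and $|S_{\ii}\cap S_{\ii[j]}|=1$ when $\cmatrix_{\ii\ii[j]}=1$.

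I would then finish by a short counting argument. The supports $S_{\ii[1]},S_{\ii[3]},S_{\ii[4]}$ of the three leaves are pairwise disjoint, so at most one of them contains the index $\ii[2]$; hence at least two leaves, say $\ii[p]$ and $\ii[r]$, satisfy $\ii[2]\notin S_{\ii[p]}\cup S_{\ii[r]}$. For such a leaf the set $S_{\ii[2]}\cap S_{\ii[p]}$ is nonempty yet contained in $S_{\ii[2]}\setminus\{\ii[2]\}=\{a(\ii[2])\}$, so $a(\ii[2])\in S_{\ii[p]}$, and likewise $a(\ii[2])\in S_{\ii[r]}$, contradicting $S_{\ii[p]}\cap S_{\ii[r]}=\emptyset$. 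So no admissible $\dmatrix$ exists and $\zigzag(\typeD[4])$ is not relative cellular. I do not expect a real obstacle: the only slightly delicate point is the claim that each column of $\dmatrix$ consists of two $1$'s, but this is literally the numerological observation already made in \fullref{lemma:odd-circles}, and the rest is elementary set counting.
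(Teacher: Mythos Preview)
Your proposal is correct and follows essentially the same approach as the paper: assume relative cellularity, invoke \fullref{lemma:the-cartan-DD} to write $\cmatrix=\dmatrix^{\mathrm{T}}\dmatrix$, use the diagonal entries to see each column of $\dmatrix$ has exactly two $1$'s, and then rule out any solution from the off-diagonal constraints. The paper simply asserts at this point that ``there is no possible solution'', whereas you supply the explicit counting argument via the supports $S_{\ii}$; this is a nice way to make that final step transparent.
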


\begin{proof}
Assume relative cellularity. Then we get
\[
\scalebox{.8}{$\displaystyle
\left(
\begin{array}{cccc}
2 & 1 & 0 & 0
\\
1 & 2 & 1 & 1
\\
0 & 1 & 2 & 0
\\
0 & 1 & 0 & 2
\end{array}
\right)
$}
=
\cmatrix=\dmatrix^{\mathrm{T}}\dmatrix
=
\scalebox{.8}{$\displaystyle
\left(
\begin{array}{cccc|cc}
1 & d_{21} & d_{31} & d_{41} & d_{51} & \cdots
\\
d_{12} & 1 & d_{32} & d_{42} & d_{52} & \cdots
\\
d_{13} & d_{23} & 1 & d_{43} & d_{53} & \cdots
\\
d_{14} & d_{24} & d_{34} & 1 & d_{54} & \cdots
\end{array}
\right)
$}
\scalebox{.8}{$\displaystyle
\left(
\begin{array}{cccc}
1 & d_{12} & d_{13} & d_{14}
\\
d_{21} & 1 & d_{23} & d_{24}
\\
d_{31} & d_{32} & 1 & d_{34}
\\
d_{41} & d_{42} & d_{43} & 1
\\
\hline
d_{51} & d_{52} & d_{53} & d_{54}
\\
\vdots & \vdots & \vdots & \vdots
\end{array}
\right)
$}
\]
by the same arguments as before in the proof of 
\fullref{lemma:odd-circles}. Actually, we get the very same 
conditions for the entries of $\dmatrix$, depending on the connectivity 
of the vertices only. But this case is easier than the case of type 
$\atypeA[]$ since there is no possible solution.
\end{proof}

\begin{lemma}\label{lemma:non-cell-DE}
If $\Gg$ is a finite or affine type $\typeDE$ graph, 
then $\zigzag$ is not relative cellular. 
If $\Gg$ is a non-bipartite graph which is not 
a type $\atypeA[n{=}2m]$ graph, then $\zigzag$ is not relative cellular. 
\end{lemma}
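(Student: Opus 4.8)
The plan is to reduce both halves of the statement, by passing to induced subgraphs, to a short list of four‑vertex graphs for which non‑relative‑cellularity is either already available or can be checked directly. The enabling tool is that idempotent truncations of relative cellular algebras are relative cellular (\cite[Proposition~2.8]{et-relcell}): for a set $V'$ of vertices of $\Gg$ the idempotent $e=\sum_{\ii\in V'}\ii$ satisfies $e\zigzag(\Gg)e\cong\zigzag(\Gg')$, where $\Gg'$ is the induced subgraph on $V'$, because the boundedness relation \ref{enum:zigzag-1} forces any path between vertices of $V'$ to involve at most two vertices, so the two algebras have matching bases. Hence, whenever $\Gg$ contains an induced subgraph $H$ with at least three vertices and $\zigzag(H)$ is not relative cellular, neither is $\zigzag(\Gg)$.

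First I would treat the finite or affine type $\typeDE$ graphs. These are all trees, hence bipartite, so by \fullref{proposition:cartan-zigzag} their Cartan matrices are positive semidefinite and \fullref{lemma:the-cartan} gives no information --- which is precisely why a separate argument is needed here. But each of them has a trivalent vertex whose three neighbours are pairwise non‑adjacent (the branch node of every finite type $\typeD[]$ or $\typeE[]$ graph and every affine type $\atypeD[]$ or $\atypeE[]$ graph; the central node of $\atypeD[4]$ even has four such neighbours), hence contains an induced copy of the claw $\typeD[4]$. Since $\zigzag(\typeD[4])$ is not relative cellular by \fullref{lemma:cell-d4}, neither is $\zigzag(\Gg)$.

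Next, let $\Gg$ be non‑bipartite and not of type $\atypeA[2m]$, and let $C$ be a shortest odd cycle of $\Gg$. Then $C$ is chordless --- a chord of an odd cycle produces a strictly shorter odd cycle --- hence an induced subgraph, and $\Gg\neq C$ because $C$ has an odd number of vertices while $\Gg$ is not of type $\atypeA[2m]$. As $\Gg$ is connected there is a vertex $w\notin C$ adjacent to some $v\in C$. If $C$ has length at least $5$, then $v$, its two $C$‑neighbours $v^{-},v^{+}$, and $w$ induce a claw: $v^{-}\not\sim v^{+}$ because $C$ is induced of length $\geq5$, and $w\sim v^{-}$ or $w\sim v^{+}$ would create a triangle, contradicting minimality of $C$; so \fullref{lemma:cell-d4} applies again. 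If $C$ has length $3$, i.e. $\Gg$ contains a triangle $T$, then, since $\Gg\neq T$ (that is the excluded $\atypeA[2]$) and $\Gg$ is connected, there is a vertex $d\notin T$ at distance $1$ from $T$, and $T\cup\{d\}$ induces the paw (a triangle with a pendant edge), the diamond ($K_{4}$ minus an edge), or $K_{4}$, according to whether $d$ is adjacent to one, two or three vertices of $T$.

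So everything comes down to showing that $\zigzag$ is not relative cellular for the paw, the diamond and $K_{4}$, and this is the step I expect to be the real obstacle. Here, unlike for $\typeD[4]$ and for $\atypeA[n]$ with $n\geq3$, the numerical equation $\cmatrix=\dmatrix^{\mathrm{T}}\dmatrix$ of \fullref{lemma:the-cartan-DD} still admits order‑compatible nonnegative integer solutions (it does for $K_{4}$ and for the paw), so the Cartan matrix alone is inconclusive --- exactly the situation of $\atypeA[2]$ in the proof of \fullref{lemma:odd-circles}. The resolution is that argument: the relations $\cmatrix_{\ii\ii}=2$ force every column of $\dmatrix$ to have precisely two nonzero entries, both equal to $1$; combining this with the multisets of composition factors of the indecomposable projectives (read off from \fullref{proposition:cartan-zigzag}) pins down the possible cell modules, and then the Loewy picture \eqref{eq:loewy} together with \cite[Lemmas~2.9 and~2.10]{gl-cellular} produces a contradiction, since some cell module would have to be simultaneously a quotient of the form $\lproj/\mathrm{soc}(\lproj)$ with semisimple socle of length $\geq2$ and a submodule of a projective, whose socle is simple. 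Carrying this out is a short finite case check for each of the three graphs.
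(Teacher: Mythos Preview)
Your overall reduction via idempotent truncation to $4$-vertex induced subgraphs is exactly the paper's strategy, though the paper reaches the case split more directly: any connected non-bipartite graph that is not an odd cycle has a vertex of degree at least $3$ (otherwise it would be a path or a cycle), and the subgraph induced on such a vertex together with three of its neighbours is $\typeD[4]$, the paw, the diamond, or $K_4$. Your shortest-odd-cycle analysis lands on the same list.

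The real divergence is in how the three triangle-containing graphs are eliminated. The paper disposes of them in one line: each has $-1$ as an eigenvalue of its adjacency matrix, which it says contradicts \fullref{lemma:the-cartan}. But that lemma only gives positive semidefiniteness of $\cmatrix=2\imatrix+\amatrix$, and an eigenvalue $-1$ of $\amatrix$ contributes eigenvalue $1$ to $\cmatrix$; in fact all three Cartan matrices here are positive definite (with determinants $5$, $4$, $5$ respectively), so the paper's shortcut does not go through as written. Your instinct that ``the Cartan matrix alone is inconclusive'' for these three graphs is therefore on the mark, and the structural argument you outline---enumerating the nonnegative integer solutions of $\cmatrix=\dmatrix^{\mathrm{T}}\dmatrix$ subject to \fullref{lemma:the-cartan-DD} and then ruling each out via the Loewy picture \eqref{eq:loewy}, in the spirit of the $\atypeA[2]$ case of \fullref{lemma:odd-circles}---is the correct way to close the gap.

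Two refinements. First, actually carry the check through for all three graphs; for $K_4$, say, one solution has a single extra cell $\Delta$ containing each simple once, and $\Delta$ would then have to appear in the two-step cell filtration of every $\lproj$, forcing its head or its socle to equal $\lsimple$ for all four $\ii$ simultaneously---but you must also dispose of the remaining decomposition patterns. Second, the cell filtration of indecomposable projectives in the \emph{relative} cellular setting is \cite[Proposition~3.19]{et-relcell}; the results from \cite{gl-cellular} you invoke are stated only for cellular algebras.
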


\begin{proof}
Since idempotent truncations of relative cellular algebras would be 
relative cellular, cf. \cite[Proposition 2.8]{et-relcell}, the statement 
follows from \fullref{lemma:cell-d4}, and by observing that the 
three cases of a trivalent vertex containing no subgraph of type $\typeD[4]$, namely
\[
\,
\xy
(0,0)*{
\begin{tikzcd}[ampersand replacement=\&,row sep=tiny,column sep=tiny,arrows={shorten >=-.5ex,shorten <=-.5ex},labels={inner sep=.05ex}]
\phantom{.} \& \phantom{.} \& \ii[3]\arrow[thick,-]{dd}{}
\\
\ii[1]\arrow[thick,-]{r}{} \& \ii[2]\arrow[thick,-]{ru}{}\arrow[thick,-]{rd}{} \& \phantom{.}
\\
\phantom{.} \& \phantom{.} \& \ii[4]
\end{tikzcd}};
\endxy
\quad\text{or}\quad
\,
\xy
(0,0)*{
\begin{tikzcd}[ampersand replacement=\&,row sep=tiny,column sep=tiny,arrows={shorten >=-.5ex,shorten <=-.5ex},labels={inner sep=.05ex}]
\phantom{.} \& \phantom{.} \& \ii[3]\arrow[thick,-]{dd}{}
\\
\ii[1]\arrow[thick,-]{r}{}\arrow[thick,-]{rru}{} \& \ii[2]\arrow[thick,-]{ru}{}\arrow[thick,-]{rd}{} \& \phantom{.}
\\
\phantom{.} \& \phantom{.} \& \ii[4]
\end{tikzcd}};
\endxy
\quad\text{or}\quad
\,
\xy
(0,0)*{
\begin{tikzcd}[ampersand replacement=\&,row sep=tiny,column sep=tiny,arrows={shorten >=-.5ex,shorten <=-.5ex},labels={inner sep=.05ex}]
\phantom{.} \& \phantom{.} \& \ii[3]\arrow[thick,-]{dd}{}
\\
\ii[1]\arrow[thick,-]{r}{}\arrow[thick,-]{rru}{}\arrow[thick,-]{rrd}{} \& \ii[2]\arrow[thick,-]{ru}{}\arrow[thick,-]{rd}{} \& \phantom{.}
\\
\phantom{.} \& \phantom{.} \& \ii[4]
\end{tikzcd}};
\endxy
\]
have $-1$ as an eigenvalue for their adjacency matrices, contradicting 
\fullref{lemma:the-cartan}.
\end{proof}

\subsubsection{The proof}\label{subsubsec:rel-cell-proof}

With the work already done we get:

\begin{proof}[Proof of \fullref{theorem:cell}]
\fullref{proposition:cell-construction} shows the 
existence of a cellular structure for finite or affine type $\typeA[]$ graphs. 
Conversely,\makeautorefname{lemma}{Lemmas} \fullref{lemma:bipartite}, \ref{lemma:non-cell} 
and \ref{lemma:non-cell-DE} prove\makeautorefname{lemma}{Lemmas} 
that none of the remaining cases 
can be (relative) cellular.
\end{proof}

\subsection{The case of \texorpdfstring{$\bzigzag$}{Z}}\label{subsec:rel-cell-bzigzag}

\subsubsection{Construction}\label{subsubsec:cell-construction-b}

Let $\Gg$ be a type $\typeA[n]$ graph, and let 
$\bzigzag(\typeA[n])$ be the associated zigzag algebra 
with vertex condition $\bvec=\{\ii[1]\}$ 
or $\bvec=\{\ii[n]\}$. In this case the construction of 
the cell datum works verbatim as for $\zigzag(\typeA[n])$ 
with the only difference that we do not need a dummy cell.
In particular, the following can be proven, mutatis mutandis, 
as \fullref{proposition:cell-construction}.

\begin{propositionqed}\label{proposition:cell-construction-b}
The above defines a graded cell datum for $\bzigzag(\typeA[n])$.  
\end{propositionqed}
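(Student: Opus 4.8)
The plan is to realize $\bzigzag(\typeA[n])$ with $\bvec=\{\ii[1]\}$ as the quotient of $\zigzag(\typeA[n])$ by the cell ideal of its smallest cell, deduce graded cellularity from \fullref{proposition:cell-construction} together with the standard fact that such a quotient of a graded cellular algebra is graded cellular, and then obtain the remaining leaf $\bvec=\{\ii[n]\}$ from the reflection symmetry of the $\typeA[n]$ graph.

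Concretely, \fullref{proposition:cell-construction} equips $\zigzag(\typeA[n])$ with a graded cellular structure whose poset is the chain $\cset=\{\ii[0]\ord{}\ii[1]\ord{}\dots\ord{}\ii[n]\}$ with $\ii[0]$ the minimal (dummy) index and $\cmset(\ii[0])=\{\pathx{1}{2}\}$. Since $\ii[0]$ is minimal, the span of the $\ii[0]$-cell is a two-sided ideal of $\zigzag(\typeA[n])$ (immediate from \ref{enum:cell2} and \ref{enum:cell4}, there being nothing strictly below $\ii[0]$); it is the one-dimensional space $\K\cbas{\pathx{1}{2},\pathx{1}{2}}{\ii[0]}$, and $\cbas{\pathx{1}{2},\pathx{1}{2}}{\ii[0]}=\pathx{1}{2}\circ\pathx{2}{1}=\pathxx{1}{2}{1}=\volele_{\ii[1]}$ by the zigzag relation \ref{enum:zigzag-3} and the definition of the volume element. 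A one-dimensional ideal containing the nonzero element $\volele_{\ii[1]}$ must coincide with the two-sided ideal $(\volele_{\ii[1]})$ generated by it, so by \fullref{definition:bzigzag} the algebra $\bzigzag(\typeA[n])=\zigzag(\typeA[n])/(\volele_{\ii[1]})$ is precisely $\zigzag(\typeA[n])$ modulo the cell ideal of its minimal cell.

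Next I would invoke the standard fact: passing from a graded cellular algebra to the quotient by the cell ideal of a minimal cell yields a graded cellular algebra whose cell datum is obtained by deleting that cell. Indeed, the images of the surviving basis elements $\cbas{S,T}{\lambda}$ with $\lambda\neq\ii[0]$ still form a homogeneous basis, the anti-involution descends because the cell ideal is ${}^{\invo}$-stable, and \ref{enum:cell1}--\ref{enum:cell4} (with $\ciset=\{1\}$) pass to the quotient verbatim, the ``lower-order'' pieces $\zigzag(\ord{}\lambda)$ simply being replaced by their images. The resulting cell datum is exactly the one described just before the statement, namely that of $\zigzag(\typeA[n])$ with the dummy $\ii[0]$ removed; this proves the proposition for $\bvec=\{\ii[1]\}$. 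Finally, the graph automorphism $\ii[k]\leftrightarrow\ii[n+1-k]$ of $\typeA[n]$ induces a graded algebra isomorphism between $\bzigzag(\typeA[n])$ with $\bvec=\{\ii[n]\}$ and the case $\bvec=\{\ii[1]\}$ just treated, which finishes the argument.

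I do not anticipate a genuine obstacle here — this is why the statement is flagged as routine — the single point requiring attention being the identification of the defining relation ideal $(\volele_{\ii[1]})$ with the bottom cell ideal of $\zigzag(\typeA[n])$. If one prefers to avoid the quotient description, the same result can be obtained by verifying \ref{enum:cell1}--\ref{enum:cell4} directly for the stated cell datum of $\bzigzag(\typeA[n])$, copying the proof of \fullref{proposition:cell-construction} essentially word for word: \ref{enum:cell1} follows from the $\bzigzag$-analogue of \fullref{lemma:basics3}, \ref{enum:cell2} is immediate, \ref{enum:cell3} is vacuous beyond $1^{\invo}=1$ since $\ciset=\{1\}$, and \ref{enum:cell4} is a short multiplication check inside the $(4n-3)$-dimensional algebra $\bzigzag(\typeA[n])$.
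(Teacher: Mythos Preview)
Your argument is correct. The paper itself gives no proof beyond the remark that the construction ``works verbatim as for $\zigzag(\typeA[n])$ with the only difference that we do not need a dummy cell'' and can be proven ``mutatis mutandis'' as \fullref{proposition:cell-construction}; this is precisely your alternative at the end.

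Your main route is genuinely different and more conceptual: you identify the dummy cell $\ii[0]$ of $\zigzag(\typeA[n])$ with the one-dimensional ideal $\K\volele_{\ii[1]}$ and then invoke the general fact that the quotient of a graded cellular algebra by the cell ideal of a minimal index is again graded cellular with that cell deleted. This neatly \emph{explains} why dropping the dummy is exactly what passes from $\zigzag(\typeA[n])$ to $\bzigzag(\typeA[n])$, rather than merely re-running the axiom check. The paper's direct verification has the virtue of being self-contained and not appealing to any external statement about cell quotients; your approach has the virtue of making the relationship between the two cell data transparent. Both are short, and your computation $\cbas{\pathx{1}{2},\pathx{1}{2}}{\ii[0]}=\pathxx{1}{2}{1}=\volele_{\ii[1]}$ is correct with the paper's conventions.
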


\subsubsection{Elimination}\label{subsubsec:rel-cell-elim-b}

For completeness:

\begin{lemmaqed}\label{lemma:the-cartan-b} 
If $\bzigzag$ is relative cellular, 
then $\mathrm{det}(\cmatrix^{\bvec})\geq 0$.
\end{lemmaqed}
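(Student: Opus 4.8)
The plan is to mimic the proof of \fullref{lemma:the-cartan} essentially verbatim, since the cellular-algebra input used there makes no reference to the particular algebra being studied. First I would recall that for \emph{any} relative cellular algebra $\calg$ the (ungraded) Cartan matrix factors over $\Z$ as $\cmatrix=\dmatrix^{\mathrm{T}}\dmatrix$, where $\dmatrix$ is the decomposition matrix; this is \cite[Proposition 3.6]{gl-cellular} in the classical setting and \cite[Theorem 3.23]{et-relcell} in the relative one, and it was already stated for $\zigzag$ in \fullref{lemma:the-cartan-DD}. As noted right after \fullref{definition:bzigzag}, everything done for $\zigzag$ applies \emph{mutatis mutandis} to $\bzigzag$, and the row/column enumeration conventions fixed before \fullref{lemma:the-cartan-DD} make sense for $\bzigzag$ as well; hence if $\bzigzag$ is relative cellular then $\cmatrix^{\bvec}=\dmatrix^{\mathrm{T}}\dmatrix$ for some integer matrix $\dmatrix$.

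From here the conclusion is immediate: a Gram matrix $\dmatrix^{\mathrm{T}}\dmatrix$ is positive semidefinite, so all of its eigenvalues are nonnegative and therefore $\mathrm{det}(\cmatrix^{\bvec})=\mathrm{det}(\dmatrix^{\mathrm{T}}\dmatrix)\geq 0$. Equivalently, one may invoke \cite[Proposition 3.2]{kx-cellular} together with \cite[Corollary 3.25]{et-relcell} directly, exactly as in \fullref{lemma:the-cartan}, to see that $\cmatrix^{\bvec}$ is positive semidefinite, which again forces $\mathrm{det}(\cmatrix^{\bvec})\geq 0$.

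There is no real obstacle here: the only thing worth a sentence is that $\bzigzag$ is a genuine finite-dimensional algebra, so the cited cellular-algebra machinery applies to it without modification. This lemma is recorded only "for completeness" and for later use — namely to restrict the list of graphs and vertex conditions for which $\bzigzag$ can be relative cellular, in combination with the recursive Cartan-determinant formula of \fullref{lemma:the-cartan-cell2} — so the proof should be one or two lines.
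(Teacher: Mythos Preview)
Your proposal is correct and matches the paper's treatment: the paper gives no proof at all for this lemma (it is a \texttt{lemmaqed} introduced only ``for completeness''), tacitly relying on exactly the same machinery you invoke, namely the positive semidefiniteness of the Cartan matrix of a relative cellular algebra via \cite[Corollary 3.25]{et-relcell} (as already used in \fullref{lemma:the-cartan}). Your additional remark that the factorization $\cmatrix^{\bvec}=\dmatrix^{\mathrm{T}}\dmatrix$ already forces $\det(\cmatrix^{\bvec})\geq 0$ is a perfectly good one-line alternative.
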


\begin{lemma}\label{lemma:cell-small-b}
Let $\Gg$ be of finite or affine $\typeADE$ graph. If $\bzigzag$ 
is relative cellular, then $\Gg$ is a type $\typeA[n]$ graph
and $\bvec=\{\ii[1]\}$ 
or $\bvec=\{\ii[n]\}$.
\end{lemma}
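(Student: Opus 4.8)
The plan is to reduce to a Cartan-determinant computation via \fullref{lemma:the-cartan-b}, which says that relative cellularity of $\bzigzag$ forces $\mathrm{det}(\cmatrix^{\bvec})\geq 0$, and then to rule out all finite and affine $\typeADE$ graphs except type $\typeA[n]$ with $\bvec=\{\ii[1]\}$ or $\bvec=\{\ii[n]\}$. First I would invoke \fullref{lemma:the-cartan-cell2} (in the one-vertex case $\bvec=\{\ii[c]\}$, it reads $\mathrm{det}(\cmatrix^{\bvec})=\mathrm{det}(\cmatrix(\zigzag))-\mathrm{det}(\cmatrix(\zigzag(\Gg-\ii[c])))$, and iterate for larger $\bvec$) to compute $\mathrm{det}(\cmatrix^{\bvec})$ in each case from the numerical data already tabulated in \eqref{eq:the-data}. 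This is the ungraded specialization $\qpar=1$ of those determinants. Since $\Gg-\ii[c]$ is a disjoint union of smaller finite $\typeADE$ pieces, $\mathrm{det}(\cmatrix(\zigzag(\Gg-\ii[c])))$ is a product of the entries in the finite-type table, all of which are positive; so the sign of $\mathrm{det}(\cmatrix^{\bvec})$ is governed by how much we subtract.

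The key steps, in order: (1) For $\Gg$ of type $\typeA[n]$, compute $\mathrm{det}(\cmatrix^{\bvec})$ for every non-empty $\bvec$; one finds it is $\geq 0$ precisely when $\bvec$ is a single leaf (giving $\mathrm{det}=(n+1)-n=1$), while any interior vertex or a second vertex drives the determinant negative — e.g. removing an interior vertex $\ii[c]$ of $\typeA[n]$ splits it into $\typeA[c-1]\sqcup\typeA[n-c]$ with product of determinants $c(n+1-c)$, and $(n+1)-c(n+1-c)<0$ for $2\le c\le n-1$. (2) For $\Gg$ of type $\typeD[n]$, $\typeE[6]$, $\typeE[7]$, $\typeE[8]$, check directly that removing any single vertex already yields $\mathrm{det}(\cmatrix^{\bvec})<0$ (the finite $\typeDE$ determinants $4,3,2,1$ are too small to survive subtracting even the smallest nontrivial product of $\typeADE$ determinants), hence no choice of $\bvec$ works. (3) For the affine graphs $\atypeA[n]$, $\atypeD[n]$, $\atypeE[6,7,8]$: here $\mathrm{det}(\cmatrix(\zigzag))\in\{0,4\}$, and deleting a vertex from an affine $\typeADE$ graph gives a finite $\typeADE$ graph (or disjoint union thereof) with strictly positive determinant, so $\mathrm{det}(\cmatrix^{\bvec})=0-(\text{positive})<0$ or $4-(\text{positive})$, which one checks is always negative for the relevant deletions; iterating over larger $\bvec$ only makes things worse. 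This eliminates all affine cases.

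The main obstacle will be the bookkeeping in Step (1) and Step (3): one must be careful that for $|\bvec|\geq 2$ the recursion from \fullref{lemma:the-cartan-cell2} is applied correctly (removing vertices one at a time, with the intermediate algebras $\zigzag^{\bvec'}$ still carrying the remaining vertex conditions), and that the graph $\Gg-\ii[c]$ is correctly identified as a disjoint union of $\typeADE$ types — in the $\typeD$ and $\typeE$ cases deleting the branch vertex produces three $\typeA$ components, and one must confirm the product of their determinants exceeds the small ambient determinant. A clean way to organize this is to tabulate, for each finite or affine $\typeADE$ graph $\Gg$ and each vertex $\ii[c]$, the pair $\bigl(\mathrm{det}(\cmatrix(\zigzag(\Gg))),\mathrm{det}(\cmatrix(\zigzag(\Gg-\ii[c])))\bigr)$, read off the sign of the difference, and note that once a single deletion already gives a negative value, no superset $\bvec$ can recover positivity (since each further deletion subtracts another positive product). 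The surviving case is exactly $\typeA[n]$ with $\bvec$ a single leaf, which is the claim; combined with \fullref{proposition:cell-construction-b} this will later pin down \fullref{theorem:cellb}.
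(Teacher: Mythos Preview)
Your overall strategy --- compute $\mathrm{det}(\cmatrix^{\bvec})$ via \fullref{lemma:the-cartan-cell2} and the table \eqref{eq:the-data}, then invoke \fullref{lemma:the-cartan-b} --- is exactly what the paper does first. But the determinant test alone does \emph{not} finish the job, and your proposal overlooks the surviving cases.

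Concretely: in Step~(1) your inequality $(n+1)-c(n+1-c)<0$ fails for $n=3$, $c=2$, where the determinant is $0$; in Step~(2) your claim that every single deletion in type $\typeD[n]$ gives a strictly negative determinant is false for the leaf $\ii[1]$ on the long arm, since $\Gg-\ii[1]$ is again of type $\typeD[n-1]$ (or $\typeA[3]$ when $n=4$) with determinant $4$, so $4-4=0$; and in Step~(3) for $\atypeA[2]$ one gets $4-3=1>0$. These three cases survive the determinant test and require separate arguments. The paper rules them out by analyzing the reciprocity equation $\cmatrix^{\bvec}=\dmatrix^{\mathrm{T}}\dmatrix$ directly (no integer solution for $\typeA[3]$ with $\bvec=\{\ii[2]\}$ or for $\typeD[4]$ with a leaf), by idempotent truncation to a $\typeD[4]$ subgraph for $\typeD[n\geq 5]$, and by a cell-filtration contradiction using the Loewy structure of the projectives for $\atypeA[2]$.

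Your treatment of $|\bvec|\geq 2$ is also shaky: the assertion that ``each further deletion subtracts another positive product'' assumes $\mathrm{det}(\cmatrix^{\bvec-\ii[c]}(\Gg-\ii[c]))>0$, but $\Gg-\ii[c]$ still carries the remaining vertex conditions from $\bvec-\ii[c]$, and you have not established positivity there. The paper bypasses this by using idempotent truncation to a subgraph carrying a \emph{single} vertex condition, reducing to the $|\bvec|=1$ analysis plus a handful of small direct checks ($\typeA[2]$ with $\bvec=\{\ii[1],\ii[2]\}$, and $\typeD[4]$ with various $\bvec$).
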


\makeautorefname{lemmaqed}{Lemmas}

\begin{proof}
The claim follows from a direct 
application of \fullref{lemma:the-cartan-b} 
and \ref{lemma:the-cartan-cell2}:\makeautorefname{lemmaqed}{Lemma}
For $\bvec=\{\ii[c]\}$ 
a small case-by-case check 
verifies that 
\begin{gather*}
\begin{tabular}{c|c|c|c|c|c|c|c|c|c}
type & $\typeA$ & $\typeD[],c{=}1$ & $\typeD[4],c{=}2$ & $\typeD[{>}4],c{>}1$ & $\typeE[]$ & $\atypeA[2m]$ & $\atypeA[2m+1]$ 
& $\atypeD[]$ & $\atypeE[]$
\\ 
\hline 
$\bvec$det & $n{+}1{-}c(n{-}c{+}1)$ & $0$ & ${<}0$ & ${<}0$ & ${<}0$ & $4{-}2m{-}1$ & ${<}0$ & ${<}0$ & ${<}0$
\\ 
\end{tabular}
\end{gather*} 
where we have used the data collected 
in \eqref{eq:the-data}.  
Since $n+1-c(n-c+1)\geq 0$ holds only 
in the cases $\bvec=\{\ii[1]\}$ or $\bvec=\{\ii[n]\}$, with one exception 
$\bvec=\{\ii[2]\}$ and $n=3$, and $4-2m-1\geq 0$ holds only if $m=1$, 
it remains to rule out those cases.
\medskip

\noindent\textit{Type $\typeA[3]$ with $\bvec=\{\ii[2]\}$.} 
Assuming relative cellularity, reciprocity would give
\[
\scalebox{.8}{$\displaystyle
\left(
\begin{array}{ccc}
2 & 1 & 0
\\
1 & 1 & 1
\\
0 & 1 & 2
\end{array}
\right)
$}
=
\cmatrix^{\bvec}=\dmatrix^{\mathrm{T}}\dmatrix
=
\scalebox{.8}{$\displaystyle
\left(
\begin{array}{ccc|cc}
1 & d_{21} & d_{31} & d_{41} & \cdots
\\
d_{21} & 1 & d_{32} & d_{42} & \cdots
\\
d_{31} & d_{32} & 1 & d_{43} & \cdots
\end{array}
\right)
$}
\scalebox{.8}{$\displaystyle
\left(
\begin{array}{ccc}
1 & d_{12} & d_{13}
\\
d_{21} & 1 & d_{23}
\\
d_{31} & d_{32} & 1
\\
\hline
d_{41} & d_{42} & d_{43}
\\
\vdots & \vdots & \vdots
\end{array}
\right)
$},
\]
which is impossible as one can easily check.
\medskip

\noindent\textit{Type $\typeD$ with $\bvec=\{\ii[1]\}$.} First assume that $n\geq 5$. Then 
there would be a type $\typeD[4]$ subgraph without vertex 
condition and idempotent truncation gives a contradiction to 
\fullref{lemma:cell-d4}. The remaining 
case, $\typeD[4]$ with vertex on any leaf, can be 
ruled out with the same matrix comparison as we have used above.
\medskip

\noindent\textit{Type $\atypeA[2]$.}
We first observe that in this case
all three possibilities to impose the vertex condition 
give isomorphic algebras. So let $\bvec=\{\ii[0]\}$. 
Then reciprocity can only work with
\[
\scalebox{.8}{$\displaystyle
\left(
\begin{array}{ccc}
1 & 1 & 1
\\
1 & 2 & 1
\\
1 & 1 & 2
\end{array}
\right)
$}
=
\cmatrix^{\bvec}=\dmatrix^{\mathrm{T}}\dmatrix
=
\scalebox{.8}{$\displaystyle
\left(
\begin{array}{ccc}
1 & 0 & 0
\\
1 & 1 & 0
\\
1 & 0 & 1
\end{array}
\right)
$}
\scalebox{.8}{$\displaystyle
\left(
\begin{array}{ccc}
1 & 1 & 1
\\
0 & 1 & 0
\\
0 & 0 & 1
\end{array}
\right)
$}
\]
as the usual reasoning shows. This would mean that we have 
three cell modules, with $\ldmod[0]$ being of dimension three, 
and the other two being simple. By \eqref{eq:loewy}, we also 
have projectives
\[
\lprojb[0]
=
\begin{gathered}
\ii[0]
\\
\pathx{\ii[1]}{\ii[0]}
\;
\pathx{\ii[2]}{\ii[0]}
\\
\phantom{\volele_{2}}
\end{gathered}
,\quad\quad
\lproj[1]
=
\begin{gathered}
\ii[1]
\\
\pathx{\ii[0]}{\ii[1]}
\;
\pathx{\ii[2]}{\ii[1]}
\\
\volele_{1}
\end{gathered}
,\quad\quad
\lproj[2]
=
\begin{gathered}
\ii[2]
\\
\pathx{\ii[0]}{\ii[2]}
\;
\pathx{\ii[1]}{\ii[2]}
\\
\volele_{2}
\end{gathered}
\]
with head $\lprojb[0]$ being $\lsimple[0]$,
and the heads of $\lproj[1]$ and $\lproj[2]$ 
are $\lsimple[1]=\ldmod[1]$ and $\lsimple[2]=\ldmod[2]$, 
respectively. However, any indecomposable 
projective has a filtration by cell modules, 
see \cite[Proposition 3.19]{et-relcell}, 
but only $\ldmod[0]$ contains $\lsimple[0]$, which gives a contradiction.
\medskip

If $\bvec$ has more than one vertex, then, 
with the same idempotent truncation arguments as above,
we only need to rule out the case of $\Gg$ being a type 
$\typeA[2]$ graph with $\bvec=\{\ii[1],\ii[2]\}$ or a type $\typeD[4]$ graph
for any $\bvec$, or a type $\typeA[3]$ graph with $\bvec=\{\ii[2]\}$.
Most of these have already been ruled out above.
By symmetry, for $\typeD[4]$ we just calculate
\begin{gather*}
\begin{tabular}{c|c|c|c|c}
type & $\typeD[4],\bvec=\{\ii[1],\ii[2]\}$ &
$\typeD[4],\bvec=\{\ii[1],\ii[3]\}$ & 
$\typeD[4],\bvec=\{\ii[1],\ii[2],\ii[3]\}$ &
$\typeD[4],\bvec=\{\ii[1],\ii[2],\ii[3],\ii[4]\}$
\\ 
\hline 
$\bvec$-det & 
$-4$ & $-1$ & $-3$ & $-2$
\\ 
\end{tabular}
\end{gather*}
which rules out this case by \fullref{lemma:the-cartan-b}. 
Finally, the case of a type $\typeA[2]$ 
graph with $\bvec=\{\ii[1],\ii[2]\}$ 
one again gets no matrix solution for reciprocity.
\end{proof}

\begin{lemma}\label{lemma:cell-big-b}
If $\Gg$ is not a finite or affine $\typeADE$ graph, then 
$\bzigzag$ is not relative cellular.
\end{lemma}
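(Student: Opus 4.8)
The plan is to reduce, by idempotent truncation, to the small graphs already handled in \fullref{theorem:cell} and \fullref{lemma:cell-small-b}. For a set $S$ of vertices of $\Gg$ and the idempotent $e=\sum_{\ii\in S}\ii$, the defining relations of $\bzigzag$ — in particular the boundedness relation~\ref{enum:zigzag-1} — show that $e\bzigzag(\Gg)e\cong\bzigzag(\Gg[S])$, where $\Gg[S]$ is the induced subgraph of $\Gg$ on $S$ and the vertex condition on it is $\bvec\cap S$ (using the conventions of \fullref{definition:zigzag} and \fullref{definition:bzigzag} when $\Gg[S]$ is small, and letting $\bzigzag$ of a disjoint union be the direct product of the $\bzigzag$'s of its components). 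Since idempotent truncations of relative cellular algebras are relative cellular, \cite[Proposition 2.8]{et-relcell}, and a finite direct product is relative cellular if and only if each of its factors is, it suffices to produce one induced subgraph $H\subseteq\Gg$ for which $\bzigzag(H)$, with vertex condition $\bvec\cap V(H)$, is not relative cellular.

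Since $\Gg$ is connected and not a finite or affine type $\typeADE$ graph, it is neither a path nor a cycle, so it has a vertex of degree at least three. If some connected component $H$ of the graph obtained from $\Gg$ by deleting all vertices of $\bvec$ still has a vertex of degree at least three, then $H$ is connected and is not a finite or affine type $\typeA[]$ graph while $\bvec\cap V(H)=\emptyset$, so $\bzigzag(H)=\zigzag(H)$ is not relative cellular by \fullref{theorem:cell}, and we are done. Otherwise every component of the $\bvec$-deleted graph is a path or a cycle, so every vertex $v$ of $\Gg$ with $\deg(v)\geq 3$ either lies in $\bvec$ or has a neighbour in $\bvec$. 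Fixing such a $v$ and selecting three of its neighbours — preferring neighbours in $\bvec$, and preferring an adjacent pair of neighbours once one member of the pair lies in $\bvec$ — the induced subgraph on $v$ together with the selected vertices (or on $v$ together with an adjacent pair of its neighbours) is always one of: a type $\typeD[4]$ graph with vertex condition on its centre or on a single leaf; a type $\atypeA[2]$ graph with non-empty vertex condition; a type $\typeA[3]$ graph with $\bvec=\{\ii[2]\}$; or a triangle with one extra pendant vertex, the vertex condition being on the pendant. Each of these is — or, after a further truncation, reduces to — a graph shown to be non-relative-cellular in \fullref{lemma:cell-small-b}, with the one new entry, the triangle-with-pendant, dispatched by the same reasoning: compute $\cmatrix^{\bvec}$ from \fullref{proposition:cartan-bzigzag} and check directly, as in the proof of \fullref{lemma:cell-d4}, that it admits no factorisation $\dmatrix^{\mathrm{T}}\dmatrix$ compatible with \fullref{lemma:the-cartan-DD}. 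In each case $\bzigzag(H)$, hence $\bzigzag(\Gg)$, fails to be relative cellular.

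The substantive work is the combinatorial bookkeeping in the last step: one must check that, whatever the local structure near a high-degree vertex and however $\bvec$ meets its closed neighbourhood, one of the listed small subgraphs genuinely occurs as an induced subgraph. The awkward configurations are those in which the branch vertex lies outside $\bvec$ but is forced to have a $\bvec$-neighbour, and those in which two of the three chosen neighbours happen to be adjacent, so that the expected type $\typeD[4]$ subgraph degenerates to a triangle (possibly carrying a pendant). Once this finite list of unavoidable small subgraphs has been pinned down, each is eliminated either by a one-line Cartan-determinant computation via \fullref{proposition:cartan-bzigzag}, \fullref{lemma:the-cartan-cell2} and \fullref{lemma:the-cartan-b}, or by the reciprocity argument $\cmatrix^{\bvec}=\dmatrix^{\mathrm{T}}\dmatrix$ used repeatedly in the foregoing lemmas.
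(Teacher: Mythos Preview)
Your overall strategy---reduce by idempotent truncation to small induced subgraphs and then eliminate each---is the same as the paper's, and the same critical case (the triangle with a pendant vertex, vertex condition on the pendant) emerges at the end. The organizational split into ``$\Gg-\bvec$ has a high-degree vertex'' versus not is a reasonable alternative to the paper's more direct ``no $\typeD[4]$ subgraph, no $\atypeA[]$ subgraph meeting $\bvec$'' reduction, and in Case~1 you get to invoke \fullref{theorem:cell} cleanly.

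However, there is a genuine gap in the final step. For the triangle-with-pendant with $\bvec=\{\ii[1]\}$ (pendant), the Cartan matrix \emph{does} admit a factorisation $\cmatrix^{\bvec}=\dmatrix^{\mathrm{T}}\dmatrix$ compatible with \fullref{lemma:the-cartan-DD}: working column by column from $\cmatrix^{\bvec}_{11}=1$ forces
\[
\dmatrix=
\begin{pmatrix}
1 & 1 & 0 & 0\\
0 & 1 & 1 & 1\\
0 & 0 & 1 & 0\\
0 & 0 & 0 & 1
\end{pmatrix},
\]
which is upper triangular with ones on the diagonal and hence satisfies all the numerical constraints in \fullref{lemma:the-cartan-DD}. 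So the argument ``as in the proof of \fullref{lemma:cell-d4}'' does not apply here. The paper reaches a contradiction by a different route: from this $\dmatrix$ one reads off the putative cell modules (one three-dimensional, the rest simple) and then checks, using the explicit Loewy structure \eqref{eq:loewy} of the projectives, that no order-compatible filtration of the indecomposable projectives by these cell modules exists, contradicting \cite[Proposition~3.19]{et-relcell}. You need this extra structural argument; the matrix equation alone is not enough.

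A secondary point: your combinatorial bookkeeping in Case~2 is only outlined, and you acknowledge as much. In particular the list of unavoidable small subgraphs and the role of the $\typeA[3]$ configuration with $\bvec=\{\ii[2]\}$ are not pinned down. The paper's organisation avoids most of this by first excluding $\typeD[4]$ and $\atypeA[]$-with-vertex-condition subgraphs globally, which leaves exactly one local configuration to handle.
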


\begin{proof}
Assume that $\bzigzag$ is relative cellular.
By the usual subgraph-truncation argument and by 
\fullref{lemma:cell-d4} as well as \fullref{lemma:cell-small-b}, 
we see that $\Gg$ can not contain a type $\typeD[4]$ subgraph 
(with or without any $\ii[c]\in\bvec$ being on this subgraph). 
Using the same arguments and \fullref{lemma:cell-small-b}, we 
see that $\Gg$ can not contain 
a type $\atypeA[]$ subgraph with $\ii[c]\in\bvec$ on it. Since 
$\Gg$ is not a type $\atypeA[]$ graph itself, it remains to rule 
out one case, i.e.
\[
\,
\xy
(0,0)*{
\begin{tikzcd}[ampersand replacement=\&,row sep=tiny,column sep=tiny,arrows={shorten >=-.5ex,shorten <=-.5ex},labels={inner sep=.05ex}]
\phantom{.} \& \phantom{.} \& \ii[3]\arrow[thick,-]{dd}{}
\\
\ii[1]\arrow[thick,-]{r}{} \& \ii[2]\arrow[thick,-]{ru}{}\arrow[thick,-]{rd}{} \& \phantom{.}
\\
\phantom{.} \& \phantom{.} \& \ii[4]
\end{tikzcd}};
\endxy
\quad
\text{for }\bvec=\{\ii[1]\}.
\]
(All other configurations of a trivalent vertex containing no 
$\typeD[4]$ subgraph contain a subgraph of type $\atypeA[2]$ 
with at least one vertex condition.)
The same reciprocity reasoning as before give only 
one potential solution, namely
\[
\scalebox{.8}{$\displaystyle
\left(
\begin{array}{cccc}
1 & 1 & 0 & 0
\\
1 & 2 & 1 & 1
\\
0 & 1 & 2 & 1
\\
0 & 1 & 1 & 2
\end{array}
\right)
$}
=
\cmatrix^{\bvec}=\dmatrix^{\mathrm{T}}\dmatrix
=
\scalebox{.8}{$\displaystyle
\left(
\begin{array}{cccc}
1 & 0 & 0 & 0
\\
1 & 1 & 0 & 0
\\
0 & 1 & 1 & 0
\\
0 & 1 & 0 & 1
\end{array}
\right)
$}
\scalebox{.8}{$\displaystyle
\left(
\begin{array}{cccc}
1 & 1 & 0 & 0
\\
0 & 1 & 1 & 1
\\
0 & 0 & 1 & 0
\\
0 & 0 & 0 & 1
\end{array}
\right)
$},
\]
from where we can read of the cell modules.
Again, this gives a contradiction to the 
filtration of the indecomposable projectives 
by cell modules.
\end{proof}

\subsubsection{The proof}\label{subsubsec:rel-cell-proof-b}

Nothing remains to be done:

\makeautorefname{lemma}{Lemmas}

\begin{proof}[Proof of \fullref{theorem:cellb}]
Combine \fullref{proposition:cell-construction-b} with
\fullref{lemma:cell-small-b} and \ref{lemma:cell-big-b}
\end{proof}

\makeautorefname{lemma}{Lemma}

In fact, by using \cite[Proposition 3.9]{at-tilting}, \cite[Theorem 3.9]{ast-cell} and 
\cite[Theorem A.13]{bt-cores}, a non-trivial 
consequence of \fullref{theorem:cellb} is that zigzag algebras 
(with or without vertex condition) can only appear 
as endomorphism algebras of duality stable tilting modules in certain highest weight categories 
if and only if $\Gg$ is a type $\typeA[]$ graph.
%
\section{Quasi-heredity}\label{section:qh}

\subsubsection{A brief reminder}\label{subsubsec:qh-reminder}

We start by recalling the definition of a quasi-hereditary algebra
as it appears e.g. in \cite[Below Example 3.3]{cps-qh}. To this end,  
a (two-sided) ideal $\qhideal$ in a finite-dimensional algebra $\qhalg$, which is 
projective as a left $\qhalg$-module, is called 
\textit{hereditary} if $\qhideal\qhideal=\qhideal$ and 
$\qhideal\algstuff{R}\mathrm{ad}(\qhalg)\qhideal=0$ hold, where 
$\algstuff{R}\mathrm{ad}(\qhalg)$ is the \textit{Jacobson radical} of $\qhalg$.

\begin{definition}\label{definition:qh}
A finite-dimensional algebra $\qhalg$ is called 
\textit{quasi-hereditary} if there exists a chain of ideals
\[
0=\qhideal_{0}\subset\qhideal_{1}\subset\dots\subset\qhideal_{k-1}\subset\qhideal_{k}=\qhalg,
\]
for some $k\in\Z_{\geq 1}$, such that the 
quotients $\qhideal_{l}/\qhideal_{l-1}$ 
are hereditary ideals in $\qhalg/\qhideal_{l-1}$.
\end{definition}

A chain as in \fullref{definition:qh} is called a \textit{hereditary chain}.

\subsubsection{The crucial example}\label{subsubsec:qh-examples}

\begin{example}\label{example:qh-typeA}
Let $\Gg$ be a type $\typeA[3]$ graph, and let $\bvec=\{\ii[1]\}$. 
Let $\qhideal_{1}=\bzigzag(\ii[1])\bzigzag$, 
$\qhideal_{2}=\bzigzag(\ii[1]+\ii[2])\bzigzag$ 
and $\qhideal_{3}=\bzigzag(\ii[1]+\ii[2]+\ii[3])\bzigzag$, i.e.
\begin{gather*}
\qhideal_{1}=\K\{
\ii[1],\pathx{2}{1},\pathx{1}{2},\volele_{\ii[2]}
\},
\quad
\qhideal_{2}=\K\{
\ii[2],\pathx{3}{2},\pathx{2}{3},\volele_{\ii[3]}
\}\oplus\qhideal_{1},
\quad
\qhideal_{3}=\K\{
\ii[3]
\}\oplus\qhideal_{1}\oplus\qhideal_{2},
\end{gather*}
where we recall that $\volele_{\ii[2]}=\pathxx{2}{1}{2}$ 
and $\volele_{\ii[3]}=\pathxx{3}{2}{3}$, while $\volele_{\ii[1]}=0$ 
because $\bvec=\{\ii[1]\}$. It follows that these form a hereditary chain.
\end{example}

\subsection{The case of \texorpdfstring{$\zigzag$}{Z}}\label{subsec:qh-zigzag}

\begin{proof}[Proof of \fullref{theorem:qh}]
The discussion about simple $\zigzag$-modules 
in \fullref{section:prelim} shows that the Jacobson 
radical $\algstuff{R}\mathrm{ad}(\zigzag)$ 
is equal to the span of all paths of positive length. 
In particular, all volume elements $\volele_{\ii}$ are in $\algstuff{R}\mathrm{ad}(\zigzag)$.
Hence, $\zigzag$ can never be 
quasi-hereditary since $\qhideal_{1}$ could not contain any 
idempotent, because any such idempotent would be a sum of vertex idempotents. 
Thus, we would have $\qhideal_{1}\algstuff{R}\mathrm{ad}(\zigzag)\qhideal_{1}\neq 0$.
\end{proof}

\subsection{The case of \texorpdfstring{$\bzigzag$}{Zb}}\label{subsec:qh-zigzag-b}

Before we start, note that the Jacobson 
radical $\algstuff{R}\mathrm{ad}(\bzigzag)$ of $\bzigzag$ is, as in the case of $\zigzag$, the 
span of all paths of positive length.

\subsubsection{Construction}\label{subsubsec:qh-construction}

Let $\Gg$ be a type $\typeA[n]$ graph, and 
let $\bzigzag$ the associated zigzag 
algebra with vertex condition $\bvec=\{\ii[1]\}$. 
Let $\qhideal_{0}=0$, and for each $\ii\in\{1,\dots,n\}$ 
we define
\begin{gather}\label{eq:qh-cells}
\qhideal_{i}=\bzigzag(\ii[1]+\dots+\ii[i])\bzigzag.
\end{gather}
Similarly, but reversing the summation order, in case $\bvec=\{\ii[n]\}$.

The following is just a summary of \fullref{example:qh-typeA}.

\begin{proposition}\label{proposition:qh-construction}
Assignment \eqref{eq:qh-cells} gives rise to a hereditary chain for $\bzigzag(\typeA[n])$ 
with vertex condition $\bvec=\{\ii[1]\}$. Similarly in case of $\bvec=\{\ii[n]\}$.
\end{proposition}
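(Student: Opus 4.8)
The plan is to prove, by induction on $n$, that the chain \eqref{eq:qh-cells} is hereditary, i.e. that each $\qhideal_{i}/\qhideal_{i-1}$ is a hereditary ideal of $\bzigzag/\qhideal_{i-1}$; the case $\bvec=\{\ii[n]\}$ follows from the case $\bvec=\{\ii[1]\}$ by reversing the labelling of the $\typeA[n]$ graph, so I only treat the latter. First I would identify the subspaces $\qhideal_{i}$ and the subquotient algebras. Using the boundedness relation \ref{enum:zigzag-1} together with the zigzag relation \ref{enum:zigzag-3} (which gives $\volele_{\ii[k]}=\pathxx{k}{k{-}1}{k}$), one checks that $\bzigzag\ii[c]\bzigzag$ is spanned by $\ii[c]$, by the arrows $\pathx{c}{d}$, $\pathx{d}{c}$ and by the volume elements $\volele_{\ii[d]}$ for $\upathx{c}{d}$; consequently $\qhideal_{i}=\bzigzag(\ii[1]+\dots+\ii[i])\bzigzag$ is precisely the span of those elements of the basis in \fullref{lemma:basics3} that touch $\{\ii[1],\dots,\ii[i]\}$, which for $i<n$ includes $\volele_{\ii[i{+}1]}=\pathxx{i{+}1}{i}{i{+}1}$. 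A dimension count then shows that $0=\qhideal_{0}\subsetneq\qhideal_{1}\subsetneq\dots\subsetneq\qhideal_{n}=\bzigzag$ is a genuine chain, and that $\bzigzag/\qhideal_{i-1}$ satisfies exactly the defining relations of $\bzigzag(\typeA[n-i+1])$ on the vertices $\ii[i],\dots,\ii[n]$ with the vertex condition imposed on the leaf $\ii[i]$ — the new relation $\volele_{\ii[i]}=0$ being exactly the statement $\pathxx{i}{i{-}1}{i}\in\qhideal_{i-1}$ — so that matching dimensions upgrades this into an isomorphism.

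Under this isomorphism the chain $0=\qhideal_{1}/\qhideal_{1}\subseteq\qhideal_{2}/\qhideal_{1}\subseteq\dots\subseteq\qhideal_{n}/\qhideal_{1}$ corresponds to the chain \eqref{eq:qh-cells} for $\bzigzag(\typeA[n-1])$ with a leaf vertex condition, so it remains only to show that $\qhideal_{1}$ is a hereditary ideal of $\bzigzag=\bzigzag(\typeA[n])$ with $\bvec=\{\ii[1]\}$ and then to invoke the inductive hypothesis; the base case is $\typeA[1]$ with a vertex condition, where $\bzigzag\cong\K$ and there is nothing to prove. The engine of the bottom step is the identity $\ii[1]\bzigzag\ii[1]=\K\ii[1]$, which holds because at the leaf $\ii[1]$ the only positive-degree endomorphism is $\volele_{\ii[1]}$ and the vertex condition kills it. From this one gets $\qhideal_{1}^{2}=\bzigzag(\ii[1]\bzigzag\ii[1])\bzigzag=\bzigzag\ii[1]\bzigzag=\qhideal_{1}$, as well as $\qhideal_{1}\algstuff{R}\mathrm{ad}(\bzigzag)\qhideal_{1}=\bzigzag\bigl(\ii[1]\algstuff{R}\mathrm{ad}(\bzigzag)\ii[1]\bigr)\bzigzag=0$ because $\ii[1]\algstuff{R}\mathrm{ad}(\bzigzag)\ii[1]=\ii[1]\bzigzag\ii[1]\cap\algstuff{R}\mathrm{ad}(\bzigzag)=0$ (recall $\algstuff{R}\mathrm{ad}(\bzigzag)$ is the span of the positive-length paths). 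For projectivity of $\qhideal_{1}$ as a left $\bzigzag$-module I would observe that, since $\ii[1]\bzigzag\ii[1]$ is the ground field, the multiplication map $\bzigzag\ii[1]\otimes_{\K}\ii[1]\bzigzag\to\bzigzag\ii[1]\bzigzag=\qhideal_{1}$ is a surjection between spaces of the same dimension ($2\cdot 2=4=\dim\qhideal_{1}$), hence an isomorphism, so $\qhideal_{1}\cong(\bzigzag\ii[1])^{\oplus 2}=(\lprojb[1])^{\oplus 2}$ is projective; this completes the bottom step and the induction.

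There is no conceptual obstacle — the statement is rightly called a summary of \fullref{example:qh-typeA} — so the only thing requiring care is the bookkeeping of the first paragraph: verifying that $\qhideal_{i}$ really collapses onto the claimed sub-basis (in particular that $\volele_{\ii[i{+}1]}$, but no volume element farther along the chain, lands in it), so that the subquotients $\bzigzag/\qhideal_{i-1}$ are honestly the smaller zigzag algebras with a fresh leaf vertex condition, and not some further degeneration.
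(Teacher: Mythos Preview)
Your proof is correct and follows essentially the same approach as the paper: both reduce the verification to the bottom ideal $\qhideal_{1}$ via the isomorphism $\bzigzag(\typeA[n])/\qhideal_{i}\cong\bzigzag(\typeA[n{-}i])$ with a fresh leaf vertex condition, and then check that $\qhideal_{1}$ is projective (isomorphic to $(\lprojb[1])^{\oplus 2}$) and satisfies $\qhideal_{1}\algstuff{R}\mathrm{ad}(\bzigzag)\qhideal_{1}=0$. Your presentation is slightly more explicit in isolating the identity $\ii[1]\bzigzag\ii[1]=\K\ii[1]$ as the common engine behind idempotency, the radical condition, and projectivity, but this is a matter of exposition rather than a different argument.
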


\begin{proof}
By construction, the $\qhideal_{i}$ are ideals in $\zigzag$ and form 
a chain as in \fullref{definition:qh}. Moreover, $\qhideal_{i}/\qhideal_{i{-}1}$ contains 
precisely one idempotent (which we identified with $\ii$), paths 
of length one either starting or ending at $\ii$ and the 
volume element $\volele_{\ii[i{+}1]}$. 
That is, we have
\begin{gather*}
\qhideal_{1}=\K\{
\ii[1],\pathx{2}{1},\pathx{1}{2},\volele_{\ii[2]}=\pathxx{2}{1}{2}
\},
\quad
\qhideal_{n}/\qhideal_{n{-}1}
=\K\{
\ii[n]
\},
\\
\qhideal_{i}/\qhideal_{i{-}1}
=
\K\{\ii,\pathx{i{+}1}{i},\pathx{i}{i{+}1},\volele_{\ii[i{+}1]}=\pathxx{i{+}1}{i}{i{+}1}\},
\quad i\in\{2,\dots,n-1\},
\end{gather*}
which shows that $\qhideal_{i}/\qhideal_{i{-}1}$ 
is an idempotent ideal. 
To check the other two conditions we observe that
$\bzigzag(\typeA[n])/\qhideal_{i}\cong\bzigzag(\typeA[n{-}i])$ 
with vertex condition imposed on its first leaf.
This means we only need to check these two conditions for $\qhideal_{1}$ 
where we get
\[
\qhideal_{1}\cong\lproj[1]\oplus\lproj[1],
\quad\quad
\qhideal_{1}\algstuff{R}\mathrm{ad}(\bzigzag)\qhideal_{1}=0,
\]
cf. \eqref{eq:loewy}. By symmetry, the same works with the vertex 
condition $\bvec=\{\ii[n]\}$.
\end{proof}

\subsubsection{Elimination}\label{subsubsec:qh-elim}

We recall a consequence of 
a classical result, cf. \cite[Proposition 1.3]{bf-qh}.

\begin{lemmaqed}\label{lemma:the-cartan-qh}
If $0=\qhideal_{0}\subset\qhideal_{1}\subset
\dots\subset\qhideal_{k-1}\subset\qhideal_{k}=\bzigzag$,
is a hereditary chain of $\bzigzag$, then 
$\mathrm{det}(\cmatrix^{\bvec}(\bzigzag/\qhideal_{l{-}1}))=1$ for all $l\in\{1,\dots,k\}$.
\end{lemmaqed}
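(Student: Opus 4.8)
The plan is to obtain this as a formal consequence of the classical fact --- essentially \cite[Proposition 1.3]{bf-qh} --- that a finite-dimensional quasi-hereditary algebra has Cartan determinant $1$, applied not to $\bzigzag$ itself but to each of the quotients $\bzigzag/\qhideal_{l-1}$. Throughout, $\cmatrix^{\bvec}(\bzigzag/\qhideal_{l-1})$ is to be read as the ordinary (ungraded) Cartan matrix of the finite-dimensional algebra $\bzigzag/\qhideal_{l-1}$, the superscript $\bvec$ only signalling that we remain in the setting of \fullref{section:qh}; since the determinant of a Cartan matrix does not depend on the enumeration of the simple modules, no ordering conventions enter.

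First I would isolate the input. If $\algstuff{A}$ is a finite-dimensional quasi-hereditary algebra, then $\mathrm{det}(\cmatrix(\algstuff{A}))=1$: its Cartan matrix factors as $\cmatrix(\algstuff{A})=\dmatrix^{\mathrm{T}}\dmatrix$, where $\dmatrix$ is the decomposition matrix recording the multiplicities of the simples in the standard modules, and $\dmatrix$ is unitriangular with respect to the defining partial order (each standard module has simple head occurring once, and all other composition factors strictly below it), cf. \cite[Proposition 1.3]{bf-qh}. Equivalently, I only need the more elementary formulation that peeling off a hereditary ideal leaves the Cartan determinant unchanged.

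Next I would check that hereditary chains truncate. Fix $l\in\{1,\dots,k\}$ and set $\algstuff{A}_{l}=\bzigzag/\qhideal_{l-1}$. I claim that
\[
0=\qhideal_{l-1}/\qhideal_{l-1}\subset\qhideal_{l}/\qhideal_{l-1}\subset\dots\subset\qhideal_{k}/\qhideal_{l-1}=\algstuff{A}_{l}
\]
is a hereditary chain of $\algstuff{A}_{l}$. Indeed, for $m\geq l$ the third isomorphism theorem identifies $(\qhideal_{m}/\qhideal_{l-1})/(\qhideal_{m-1}/\qhideal_{l-1})$ with $\qhideal_{m}/\qhideal_{m-1}$ as an ideal of $(\bzigzag/\qhideal_{l-1})/(\qhideal_{m-1}/\qhideal_{l-1})\cong\bzigzag/\qhideal_{m-1}$, and the assertion that this ideal is hereditary there is precisely one of the conditions of \fullref{definition:qh} built into the hypothesis that the original chain $0=\qhideal_{0}\subset\dots\subset\qhideal_{k}=\bzigzag$ is a hereditary chain of $\bzigzag$. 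Hence $\algstuff{A}_{l}$ is quasi-hereditary and finite-dimensional.

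Finally I would combine the two ingredients: since $\bzigzag/\qhideal_{l-1}$ is quasi-hereditary for every $l$, the input from the second paragraph gives $\mathrm{det}(\cmatrix(\bzigzag/\qhideal_{l-1}))=1$, which is the assertion. (In the ``peeling'' formulation one argues instead that $\mathrm{det}(\cmatrix(\bzigzag/\qhideal_{m-1}))=\mathrm{det}(\cmatrix(\bzigzag/\qhideal_{m}))$ for each $m$, because $\qhideal_{m}/\qhideal_{m-1}$ is a hereditary ideal of $\bzigzag/\qhideal_{m-1}$; starting from $\bzigzag/\qhideal_{k}=0$, whose Cartan matrix is empty with determinant $1$, downward induction on $m$ gives the value $1$ at every stage.) There is no substantive obstacle here: the only two points needing care are that the truncated chains really are hereditary chains --- handled by the third isomorphism theorem above --- and that $\cmatrix^{\bvec}(\bzigzag/\qhideal_{l-1})$ is interpreted as the Cartan matrix of that quotient algebra, so that the cited classical statement applies verbatim.
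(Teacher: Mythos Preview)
Your argument is correct and matches the paper's approach: the paper does not spell out a proof but simply records this lemma as a consequence of \cite[Proposition 1.3]{bf-qh}, which is precisely the input you invoke. Your added observation that the truncated chain is again a hereditary chain of $\bzigzag/\qhideal_{l-1}$ (via the third isomorphism theorem) is the routine step the paper leaves implicit.
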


\begin{lemma}\label{lemma:qh-small}
Let $\Gg$ be of finite or affine $\typeADE$ graph,
and assume that $\bvec=\{\ii[c]\}$. If $\bzigzag$ 
is quasi-hereditary, then $\Gg$ is a type $\typeA[n]$ graph
and $\bvec=\{\ii[1]\}$ or $\bvec=\{\ii[n]\}$.
\end{lemma}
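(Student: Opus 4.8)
The plan is to mimic the elimination strategy already used for relative cellularity in \fullref{lemma:cell-small-b}, but now with the sharper numerical constraint supplied by \fullref{lemma:the-cartan-qh}: a hereditary chain forces $\mathrm{det}(\cmatrix^{\bvec}(\bzigzag/\qhideal_{l-1}))=1$ for \emph{every} $l$, in particular $\mathrm{det}(\cmatrix^{\bvec})=1$ itself. So the first step is to run through the finite and affine $\typeADE$ cases with $\bvec=\{\ii[c]\}$ using the $\bvec$-determinant data assembled in \eqref{eq:the-data} and in the table inside the proof of \fullref{lemma:cell-small-b}, keeping only those $(\Gg,c)$ for which $\mathrm{det}(\cmatrix^{\bvec})=1$. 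From $\mathrm{det}(\cmatrix^{\bvec})=n+1-c(n-c+1)$ in type $\typeA[n]$ one sees this equals $1$ exactly when $c=1$ or $c=n$ (the boundary cases), with no sporadic exception this time (the value $0$ occurred at $\typeD$, $c=1$, but never the value $1$ outside type $\typeA[]$, and all other entries are $\leq 0$). In type $\atypeA[2m]$ the determinant is $4-2m-1=3-2m$, which is never $1$; all remaining finite and affine $\typeADE$ entries give nonpositive determinants. Hence after this step only type $\typeA[n]$ with $\bvec=\{\ii[1]\}$ or $\bvec=\{\ii[n]\}$ survives, which is exactly the claim.

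A small subtlety is that \fullref{lemma:the-cartan-qh} is stated for the \emph{ungraded} Cartan determinant, so I will phrase the elimination entirely in terms of $\mathrm{det}(\cmatrix^{\bvec})$; the dequantized values are read off by setting $\qpar=1$ in the graded determinants of \eqref{eq:the-data} (via \fullref{lemma:the-cartan-cell2}), or one simply quotes the already-dequantized table from the proof of \fullref{lemma:cell-small-b}. Since quasi-heredity is a strictly stronger hypothesis than relative cellularity in the presence of a nonempty vertex condition — indeed a quasi-hereditary algebra with the standard BGG reciprocity is relative cellular in our setting — one could alternatively invoke \fullref{lemma:cell-small-b} directly to reduce to type $\typeA[3]$ with $\bvec=\{\ii[2]\}$ and the multi-vertex $\bvec$ cases, then kill those with the determinant-equals-one criterion. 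I will take the direct determinant route as the cleaner argument.

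It remains to handle the case where $\bvec$ has more than one vertex, but the same idempotent-truncation philosophy applies: if $\bzigzag(\Gg)$ is quasi-hereditary and $\ii[c']\notin\bvec$, then $\ii[c']\bzigzag(\Gg)\ii[c']$-style truncations relate $\bzigzag$ to smaller zigzag algebras — however, strictly speaking this lemma only asserts the single-vertex statement ($\bvec=\{\ii[c]\}$), so I do not need the multi-vertex reduction here; that will be folded into the proof of \fullref{theorem:qhb} together with \fullref{lemma:qh-small} and a companion lemma for graphs that are not finite or affine $\typeADE$. So within this lemma the only work is the determinant bookkeeping described above.

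The step I expect to be the genuine content, rather than routine, is making sure the numerical table is correctly transcribed and that no sporadic small-rank coincidence produces determinant $1$ outside type $\typeA[]$ — in particular double-checking $\typeD[4]$ with $c=2$, the various $\typeE[]$ vertices, and low-rank affine cases like $\atypeA[2]$ and $\atypeD[4]$ by hand, since these are precisely the places where the relative-cellularity argument needed extra matrix-solution arguments. Here, fortunately, the rigidity of ``$\det=1$'' (as opposed to ``$\det\geq 0$'') should make every such case fall immediately to a determinant computation, so no auxiliary reciprocity-matrix analysis is needed.
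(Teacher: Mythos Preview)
Your determinant strategy is the right one, and for almost all finite and affine $\typeADE$ cases it works exactly as you say. But there is a genuine arithmetic slip that leaves a hole: you write that in type $\atypeA[2m]$ the $\bvec$-determinant $3-2m$ ``is never $1$''. For $m=1$ it \emph{is} $1$. Concretely, for $\atypeA[2]$ with $\bvec=\{\ii[0]\}$ one has
\[
\cmatrix^{\bvec}=
\begin{pmatrix}
1 & 1 & 1\\
1 & 2 & 1\\
1 & 1 & 2
\end{pmatrix},
\qquad
\det(\cmatrix^{\bvec})=1,
\]
so the criterion $\det(\cmatrix^{\bvec})=1$ does \emph{not} eliminate this case. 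This is precisely the one case the paper has to treat separately, and it is exactly the spot where the proof of \fullref{lemma:cell-small-b} also needed an ad hoc argument; your expectation that ``the rigidity of $\det=1$'' would make every small case fall immediately is unfortunately false here.

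The paper's repair uses the \emph{full} strength of \fullref{lemma:the-cartan-qh}, not just the $l=1$ instance. By \cite[Corollary 1.2]{bf-qh} the bottom ideal $\qhideal_{1}$ of any hereditary chain must be generated by the primitive idempotent $\ii[0]$, so $\qhideal_{1}=\bzigzag\,\ii[0]\,\bzigzag$ is forced. Passing to the quotient, $\bzigzag/\qhideal_{1}$ has Cartan matrix $\begin{psmallmatrix}1&1\\1&1\end{psmallmatrix}$ (it is the zigzag algebra on the remaining $\typeA[2]$ edge with both volume elements killed), whose determinant is $0\neq 1$, contradicting \fullref{lemma:the-cartan-qh} at the next step. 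You should insert this extra paragraph for $\atypeA[2]$; the rest of your argument is fine.
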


\begin{proof}
By \fullref{lemma:the-cartan-qh} and
very similar to the proof of \fullref{lemma:cell-small-b} 
(by calculating determinants), 
it remains to check the case that $\Gg$ is a type $\atypeA[2]$ graph 
with one vertex condition.
To this end, we first observe that in this case
all three possibilities to impose the vertex condition 
give isomorphic algebras. So let $\bvec=\{\ii[0]\}$. 
Then, by e.g. \cite[Corollary 1.2]{bf-qh}, 
one would need to set
\[
\qhideal_{1}=\K\{
\ii[0],
\pathx{1}{0},\pathx{0}{1},\volele_{\ii[1]},
\pathx{2}{0},\pathx{0}{1},\volele_{\ii[2]}
\}.
\]
But we have $\mathrm{det}(\cmatrix^{\bvec}(\bzigzag/\qhideal_{1}))=0$, 
contradicting \fullref{lemma:the-cartan-qh}.
\end{proof}

\begin{lemma}\label{lemma:qh-big}
If $\Gg$ is not a type $\typeA[n]$ graph with $\bvec=\{\ii[1]\}$ or $\bvec=\{\ii[n]\}$, 
then $\bzigzag$ is not quasi-hereditary.
\end{lemma}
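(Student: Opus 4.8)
The plan is to combine the two elimination lemmas already proven with an idempotent-truncation argument, exactly mirroring how \fullref{lemma:cell-big-b} was deduced from \fullref{lemma:cell-d4} and \fullref{lemma:cell-small-b}. First I would dispose of the case where $\Gg$ is itself a finite or affine $\typeADE$ graph: here \fullref{lemma:qh-small} handles $\bvec=\{\ii[c]\}$ a single vertex, and for $\bvec$ with more than one element I would use that $\bzigzag/\qhideal_{l-1}$-type reductions (or directly the determinant obstruction \fullref{lemma:the-cartan-qh} combined with the recursion \fullref{lemma:the-cartan-cell2}) rule these out, since adding more vertex conditions only makes $\mathrm{det}(\cmatrix^{\bvec})$ smaller, as the tables in the proof of \fullref{lemma:cell-small-b} already show. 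So the substance is the case where $\Gg$ is \emph{not} a finite or affine $\typeADE$ graph.

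For that case the key observation is that quasi-hereditarity passes to idempotent truncations: if $e$ is an idempotent fixed by the relevant structure, then $e\bzigzag e$ is again quasi-hereditary (this is classical, e.g. via \cite[Corollary 1.2]{bf-qh} or the standard statshuffle). So I would argue: a graph that is not finite or affine $\typeADE$ must, by the eigenvalue characterization recalled in the proof of \fullref{lemma:bipartite}, either contain an odd cycle (a type $\atypeA[2m]$ subgraph) or a trivalent configuration that is not contained in any $\typeD[4]$, or else be genuinely ``too big'' and contain a $\typeD[4]$ subgraph or an $\atypeA[]$ subgraph. Truncating to the relevant subgraph, and tracking where the vertices of $\bvec$ sit, reduces to finitely many small configurations: a $\typeD[4]$ subgraph (with or without a $\ii[c]\in\bvec$ on it), an $\atypeA[]$ subgraph carrying a vertex condition, an $\atypeA[]$ subgraph carrying none, and the exceptional trivalent vertex of \fullref{lemma:cell-big-b} now with $\bvec=\{\ii[1]\}$. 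Each of these is killed either by \fullref{lemma:qh-small}, by the determinant count \fullref{lemma:the-cartan-qh}, or — for the $\atypeA[]$-subgraph-without-condition and the trivalent case — by the reciprocity/matrix argument already carried out in \fullref{lemma:cell-big-b}, noting that any quasi-hereditary algebra is cellular so the cellularity obstructions apply verbatim.

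The main obstacle is bookkeeping rather than a new idea: one must be careful that the idempotent truncations really do preserve quasi-heredity in the graded, not-necessarily-basic setting, and that when we truncate away part of $\Gg$ we correctly keep track of which truncated algebra we land in (it is a $\bzigzag(\Gg')$ for the induced subgraph $\Gg'$, with $\bvec$ restricted to $\Gg'$, possibly empty — and if it becomes empty we are back to $\zigzag$, which is never quasi-hereditary by \fullref{theorem:qh}, giving the contradiction immediately). The only genuinely new small computation is ruling out the trivalent configuration with $\bvec=\{\ii[1]\}$, but its Cartan matrix is the one displayed in \fullref{lemma:cell-big-b}, whose only candidate $\dmatrix$ fails the projective-filtration compatibility; since quasi-hereditary implies cellular, that same contradiction applies. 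I would therefore write:

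\begin{proof}
Since any quasi-hereditary algebra is in particular cellular, \fullref{theorem:cellb} already rules out every graph except possibly a type $\typeA[n]$ graph with $\bvec=\{\ii[1]\}$ or $\bvec=\{\ii[n]\}$. Hence there is nothing left to prove: if $\Gg$ is not of this form, then $\bzigzag$ is not cellular, hence not quasi-hereditary. Alternatively, and without invoking \fullref{theorem:cellb}, one argues directly. If $\Gg$ is a finite or affine $\typeADE$ graph, then \fullref{lemma:qh-small} handles $\bvec=\{\ii[c]\}$, and for larger $\bvec$ one truncates to a single-vertex-condition subalgebra or applies \fullref{lemma:the-cartan-qh} together with the recursion of \fullref{lemma:the-cartan-cell2}, using the determinant data in \eqref{eq:the-data}, exactly as in the proof of \fullref{lemma:cell-small-b}. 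If $\Gg$ is not a finite or affine $\typeADE$ graph, then as in the proof of \fullref{lemma:cell-big-b} it contains either a type $\typeD[4]$ subgraph, or a type $\atypeA[]$ subgraph, or the trivalent configuration of \fullref{lemma:cell-big-b}. Idempotent truncation, which preserves quasi-heredity, then reduces to these cases. A type $\atypeA[]$ subgraph carrying some $\ii[c]\in\bvec$, and a type $\typeD[4]$ subgraph, are excluded by \fullref{lemma:cell-d4} and \fullref{lemma:cell-small-b} via cellularity; a type $\atypeA[]$ subgraph carrying no vertex condition truncates to $\zigzag(\atypeA[])$, which is not quasi-hereditary by \fullref{theorem:qh}; and the trivalent configuration with $\bvec=\{\ii[1]\}$ is excluded by the matrix comparison in \fullref{lemma:cell-big-b}. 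In all cases $\bzigzag$ fails to be quasi-hereditary.
\end{proof}
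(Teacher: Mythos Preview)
Both routes rest on false claims. The assertion ``any quasi-hereditary algebra is in particular cellular'' is not true: cellularity requires an anti-involution satisfying \fullref{definition:cell-algebra}, and plenty of quasi-hereditary algebras (e.g.\ path algebras of acyclic quivers with no arrow-reversing symmetry) admit none. You might hope to rescue this using the anti-involution ${}^{\invo}$ on $\bzigzag$, but ``split quasi-hereditary with a simple-preserving duality $\Rightarrow$ cellular'' is itself a non-trivial statement that you have neither proved nor cited, so invoking \fullref{theorem:cellb} here is a gap. More fatally, your alternative argument assumes that idempotent truncation preserves quasi-heredity, and this is false in general: if $\algstuff{A}$ is the Auslander algebra of $\Lambda=\K[\volele]/(\volele^{m})$ then $\algstuff{A}$ is quasi-hereditary, yet $e\algstuff{A}e\cong\Lambda$ for a suitable idempotent $e$, and $\Lambda$ has infinite global dimension. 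Hence truncating $\bzigzag$ to a subgraph proves nothing about quasi-heredity, and every step of your second argument that relies on such a truncation is unjustified.

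The paper's argument uses the one reduction that genuinely preserves quasi-heredity, namely passing to the quotient $\bzigzag/\qhideal_{1}$ by the bottom heredity ideal. It shows that $\qhideal_{1}$ must be generated by some $\ii[c]\in\bvec$ (otherwise the volume element violates $\qhideal_{1}\algstuff{R}\mathrm{ad}(\bzigzag)\qhideal_{1}=0$), that projectivity of $\qhideal_{1}$ as a left module forces $\ii[c]$ to be a leaf whose neighbour $\ii[j]$ is not in $\bvec$, and that then $\bzigzag/\qhideal_{1}$ is again a zigzag algebra, on $\Gg-\ii[c]$ with the vertex condition moved to $\ii[j]$. Recursion on this quotient lands either in two adjacent vertex conditions (already excluded by the projectivity step) or in the situation of \fullref{lemma:qh-small}.
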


\begin{proof}
Assume that $\bzigzag$ is quasi-hereditary.
Again, by \cite[Corollary 1.2]{bf-qh}, 
the ideal $\qhideal_{1}$ has to contain a 
primitive idempotent for some $\ii[c]\in\bvec$, 
and we get
\begin{align*}
\qhideal_{1}
&=
\K\{
\ii[c],\pathx{j}{c},\pathx{c}{j},\volele_{\ii[j]},
\pathx{d}{c},\pathx{c}{d}\mid \upathx{c}{j},\ii[j]\notin\bvec,\upathx{c}{d},\ii[d]\in\bvec
\}
\\
&\cong
\lprojb[c]
\oplus
{\textstyle\bigoplus_{\upathx{c}{j},\ii[j]\notin\bvec}}\,
\K\{
\pathx{c}{j},\volele_{\ii[j]}
\}
\oplus
{\textstyle\bigoplus_{\upathx{c}{d},\ii[d]\in\bvec}}\,
\K\{
\pathx{c}{d}
\},
\end{align*}
where the decomposition follows from the structure 
of $\bzigzag$. If $\bvec$ does not contain a leaf, then 
each indecomposable projective of $\bzigzag$ is of dimension at least three, 
cf. \eqref{eq:loewy}, and $\qhideal_{1}$ can not be a projective 
$\bzigzag$-module. Thus, assume that $\ii[c]\in\bvec$ is a leaf 
that gives $\qhideal_{1}$. By the same argument as above, 
$\ii[c]$ could not have a neighbor $\ii[c]\in\bvec$ since 
each indecomposable projective of $\bzigzag$ in this case is of dimension at least two.
In the remaining case, i.e. $\ii[c]\in\bvec$ is a leaf and its 
neighbor $\upathx{c}{j}$ is not in $\bvec$, we can consider $\bzigzag/\qhideal_{1}$ 
which would be quasi-hereditary, cf. \cite[Below Example 3.3]{cps-qh}. But this 
would recursively give a contradiction, since we have 
$\bzigzag/\qhideal_{1}\cong\zigzag^{\bvec-\ii[c]}(\Gg-\ii[c])$ 
where the vertex condition for $\zigzag^{\bvec-\ii[c]}(\Gg-\ii[c])$ is at $\ii[j]$: 
If $|\bvec|\geq 2$, then, at one point, one has two neighboring 
vertex conditions, contradicting the above observation. If $\bvec=\{\ii[c]\}$, then 
one will have a finite or affine type $\typeADE$ graph 
with one vertex condition. However, by \fullref{lemma:qh-small}, 
the only case where this would 
not give a contradiction is the case with $\Gg$ being a type $\typeA[n]$ graph 
with $\bvec=\{\ii[1]\}$ or $\bvec=\{\ii[n]\}$.
\end{proof}

\subsubsection{The proof}\label{subsubsec:qh-proof}

We collect the harvest:

\begin{proof}[Proof of \fullref{theorem:qhb}]
Combine \fullref{proposition:qh-construction}, which constructs 
the hereditary chain, with \fullref{lemma:qh-small}, which rules out all other cases.
\end{proof}
%
\section{Koszulity}\label{section:koszul}

\subsubsection{A brief reminder}\label{subsubsec:koszul-reminder}

We start one definition of 
a Koszul algebra, cf. \cite[Definition 1.1.2]{bgs-koszul} 
or \cite[Section 2.2]{pp-quadratic}. 
Recall here that a \textit{linear projective 
resolution} of a graded module $\algstuff{M}$ of a finite-dimensional, 
positively graded algebra $\kalg$ 
is an exact sequence
\begin{gather}\label{eq:koszul-reso}
\begin{tikzcd}[ampersand replacement=\&,row sep=small,column sep=small,arrows={shorten >=-.5ex,shorten <=-.5ex},labels={inner sep=.05ex}]
\cdots
\arrow[->]{r}{}
\&
\qpar^{2}
\algstuff{Q}_{2}
\arrow[->]{r}{}
\&
\qpar
\algstuff{Q}_{1}
\arrow[->]{r}{}
\&
\algstuff{Q}_{0}
\arrow[->>]{r}{}
\&
\algstuff{M},
\end{tikzcd}
\end{gather}
with graded projective $\kalg$-modules $\qpar^{t}\algstuff{Q}_{t}$ 
(for us this is the $t^{\mathrm{th}}$ part of the resolution) 
generated in degree $t$, and $\kalg$-equivariant maps of degree $0$.
Using our grading conventions from \fullref{section:prelim}, this is the same 
data as giving an exact sequence of homogeneous, $\kalg$-equivariant maps of degree $1$ 
between the graded projectives $\algstuff{Q}_{t}$.

\begin{definition}\label{definition:koszul}
A finite-dimensional, positively graded algebra $\kalg$ is called 
\textit{Koszul} if its degree $0$ part is semisimple and each simple $\kalg$-module admits 
a linear projective resolution.
\end{definition}

For us only the property of having a linear projective resolution will 
play a role; our algebras are evidently finite-dimensional, positively graded 
and have a semisimple degree $0$ part. 
Moreover, up to shifts, we can focus on simples which are concentrated in degree $0$.

\subsubsection{The crucial examples}\label{subsubsec:koszul-examples}

\begin{example}\label{example-koszul-affineA}
Let $\Gg$ be a type $\atypeA[2]$ graph. In this case it is easy to write 
down a linear projective resolution of $\lsimple[0]$ 
(of course, the others are similar):
\[
\xy
(0,0)*{
\begin{tikzcd}[ampersand replacement=\&,row sep=tiny,column sep=small,arrows={shorten >=-.5ex,shorten <=-.5ex},labels={inner sep=.05ex}]
\lproj[0]\arrow[thin,->]{rd}{\cdot\obstuff{0}\scalebox{.8}{$\to$}\obstuff{2}} \& \phantom{.} \& \phantom{.} \& \phantom{.} \& \phantom{.}
\\
\phantom{.} \& {\color{myred}\lproj[2]}\arrow[thin,myred,->]{rd}{\cdot\obstuff{2}\scalebox{.8}{$\to$}\obstuff{1}} \& \phantom{.} \& \phantom{.} \& \phantom{.}
\\
{\color{myblue}\lproj[1]}\arrow[thin,densely dashed,myblue,->]{rd}[swap]{\cdot\text{-}\obstuff{1}\scalebox{.8}{$\to$}\obstuff{0}}\arrow[thin,densely dashed,myblue,->]{ru}{\cdot\obstuff{1}\scalebox{.8}{$\to$}\obstuff{2}} \& \phantom{.} \& {\color{myblue}\lproj[1]}\arrow[thin,myblue,->]{rd}{\cdot\obstuff{1}\scalebox{.8}{$\to$}\obstuff{0}} \& \phantom{.} \& \phantom{.}
\\
\phantom{.} \& \lproj[0]\arrow[thin,densely dashed,->]{rd}[swap]{\cdot\text{-}\obstuff{0}\scalebox{.8}{$\to$}\obstuff{2}}\arrow[thin,densely dashed,->]{ru}{\cdot\obstuff{0}\scalebox{.8}{$\to$}\obstuff{1}} \& \phantom{.} \& \lproj[0]\arrow[thin,->>]{r}{} \& \lsimple[0]
\\
{\color{myred}\lproj[2]}\arrow[thin,densely dashed,myred,->]{rd}[swap]{\cdot\obstuff{2}\scalebox{.8}{$\to$}\obstuff{1}}\arrow[thin,densely dashed,myred,->]{ru}{\cdot\obstuff{2}\scalebox{.8}{$\to$}\obstuff{0}} \& \phantom{.} \& {\color{myred}\lproj[2]}\arrow[thin,myred,->]{ru}[swap]{\cdot\obstuff{2}\scalebox{.8}{$\to$}\obstuff{0}} \& \phantom{.} \& \phantom{.}
\\
\phantom{.} \& {\color{myblue}\lproj[1]}\arrow[thin,myblue,->]{ru}[swap]{\cdot\obstuff{1}\scalebox{.8}{$\to$}\obstuff{2}} \& \phantom{.} \& \phantom{.} \& \phantom{.}
\\
\lproj[0]\arrow[thin,->]{ru}[swap]{\cdot\obstuff{0}\scalebox{.8}{$\to$}\obstuff{1}} \& \phantom{.} \& \phantom{.} \& \phantom{.} \& \phantom{.}
\end{tikzcd}};
\endxy,
\]
where the rightmost map is the projection, while the other maps 
are given by post-composition (which commutes with the left action 
given by pre-composition) with the corresponding paths, where the 
dashed arrows hit linear combinations in the kernels. For example, 
because $\pathxx{0}{1}{0}=\volele_{\ii[0]}=\pathxx{0}{2}{0}$, the element 
$\pathx{0}{1}-\pathx{0}{2}$ is in the $1^{\mathrm{st}}$ kernel and 
we use two maps from $\lproj[0]$ to compensate for it. Since 
these are of degree $1$, we get a linear projective resolution by 
continuation of the sketched pattern.
\end{example}

\begin{example}\label{example-koszul-finiteD}
Let $\Gg$ be a type $\typeD[4]$ graph. We try to 
resolve ${\color{myblue}\lsimple[1]}$ and ${\color{myred}\lsimple[2]}$ linearly 
by projectives:
\[
\xy
(0,0)*{
\begin{tikzcd}[ampersand replacement=\&,row sep=tiny,column sep=small,arrows={shorten >=-.5ex,shorten <=-.5ex},labels={inner sep=.05ex}]
\phantom{.} \& \phantom{.} \& {\color{mypurple}\lproj[3]}\arrow[thin,mypurple,->]{rd}{\cdot\obstuff{3}\scalebox{.8}{$\to$}\obstuff{2}} \& \phantom{.} \& \phantom{.} \& \phantom{.}
\\
{\color{myblue}\lproj[1]}\arrow[thin,myblue,->]{r}[xshift=-.11cm]{\cdot\obstuff{1}\scalebox{.8}{$\to$}\obstuff{2}} \& {\color{myred}\lproj[2]}\arrow[thin,densely dashed,myred,->]{rd}[swap]{\cdot\text{-}\obstuff{2}\scalebox{.8}{$\to$}\obstuff{4}}\arrow[thin,densely dashed,myred,->]{ru}{\cdot\obstuff{2}\scalebox{.8}{$\to$}\obstuff{3}} \& \phantom{.} \& {\color{myred}\lproj[2]}\arrow[thin,myred,->]{r}[xshift=-.11cm]{\cdot\obstuff{2}\scalebox{.8}{$\to$}\obstuff{1}} \& {\color{myblue}\lproj[1]}\arrow[thin,myblue,->>]{r}{} \& {\color{myblue}\lsimple[1]}
\\
\phantom{.} \& \phantom{.} \& {\color{mygreen}\lproj[4]}\arrow[thin,mygreen,->]{ru}[swap]{\cdot\obstuff{4}\scalebox{.8}{$\to$}\obstuff{2}} \& \phantom{.} \& \phantom{.} \& \phantom{.}
\end{tikzcd}};
\endxy
\quad
\text{and}
\quad
\xy
(0,0)*{
\begin{tikzcd}[ampersand replacement=\&,row sep=tiny,column sep=small,arrows={shorten >=-.5ex,shorten <=-.5ex},labels={inner sep=.05ex}]
\phantom{.} \& {\color{mygreen}\lproj[4]}\arrow[thin,mygreen,->]{rd}[swap]{\cdot\obstuff{4}\scalebox{.8}{$\to$}\obstuff{2}} \& \phantom{.} \& {\color{myblue}\lproj[1]}\arrow[thin,myblue,->]{rdd}{\cdot\obstuff{1}\scalebox{.8}{$\to$}\obstuff{2}} \& \phantom{.} \& \phantom{.}
\\
\phantom{.} \& \phantom{.} \& {\color{myred}\lproj[2]}\arrow[thin,densely dashed,myred,->]{rd}[swap]{\cdot\text{-}\obstuff{2}\scalebox{.8}{$\to$}\obstuff{3}}\arrow[thin,densely dashed,myred,->]{ru}{\cdot\obstuff{2}\scalebox{.8}{$\to$}\obstuff{1}} \& \phantom{.} \& \phantom{.} \& \phantom{.}
\\
{\color{myred}\lproj[2]}\arrow[thin,densely dashed,myred,->]{ruu}{\cdot\text{-}\obstuff{2}\scalebox{.8}{$\to$}\obstuff{4}}\arrow[thin,densely dashed,myred,->]{rdd}[swap]{\cdot\text{-}\obstuff{2}\scalebox{.8}{$\to$}\obstuff{1}}\arrow[thin,densely dashed,myred,->]{r}[xshift=-.11cm]{\cdot\obstuff{2}\scalebox{.8}{$\to$}\obstuff{3}} \& {\color{mypurple}\lproj[3]}\arrow[thin,densely dashed,mypurple,->]{rd}[swap]{\cdot\obstuff{3}\scalebox{.8}{$\to$}\obstuff{2}}\arrow[thin,densely dashed,mypurple,->]{ru}{\cdot\obstuff{3}\scalebox{.8}{$\to$}\obstuff{2}} \& \phantom{.} \& {\color{mypurple}\lproj[3]}\arrow[thin,mypurple,->]{r}[xshift=-.11cm]{\cdot\obstuff{3}\scalebox{.8}{$\to$}\obstuff{2}} \& {\color{myred}\lproj[2]}\arrow[thin,myred,->>]{r}{} \& {\color{myred}\lsimple[2]}
\\
\phantom{.} \& \phantom{.} \& {\color{myred}\lproj[2]}\arrow[thin,densely dashed,myred,->]{rd}[swap]{\cdot\text{-}\obstuff{2}\scalebox{.8}{$\to$}\obstuff{4}}\arrow[thin,densely dashed,myred,->]{ru}{\cdot\obstuff{2}\scalebox{.8}{$\to$}\obstuff{3}} \& \phantom{.} \& \phantom{.} \& \phantom{.}
\\
\phantom{.} \& {\color{myblue}\lproj[1]}\arrow[thin,myblue,->]{ru}{\cdot\obstuff{1}\scalebox{.8}{$\to$}\obstuff{2}} \& \phantom{.} \& {\color{mygreen}\lproj[4]}\arrow[thin,mygreen,->]{ruu}[swap]{\cdot\obstuff{4}\scalebox{.8}{$\to$}\obstuff{2}} \& \phantom{.} \& \phantom{.}
\end{tikzcd}};
\endxy,
\]
where we use the same conventions as in \fullref{example-koszul-affineA}.
The $4^{\mathrm{th}}$ parts have kernels supported in degree $2$, and we are stuck. 
(Note that this happens after the same number of steps.) 
As we will see, the same holds 
for all type $\typeADE$ graphs.

In contrast, if we add the vertex condition $\bvec=\{\ii[1]\}$ to 
the type $\typeA[]$ graph, then the linear projective resolutions 
exist and are all finite. This happens since ${\color{myblue}\lprojb[1]}$ 
will not contain a volume element, e.g.
\[
\xy
(0,0)*{
\begin{tikzcd}[ampersand replacement=\&,row sep=tiny,column sep=small,arrows={shorten >=-.5ex,shorten <=-.5ex},labels={inner sep=.05ex}]
\phantom{.} \& \phantom{.} \& {\color{myblue}\lprojb[1]}\arrow[thin,myblue,->]{rd}[xshift=-.11cm]{\cdot\obstuff{1}\scalebox{.8}{$\to$}\obstuff{2}} \& \phantom{.} \& \phantom{.} \& \phantom{.}
\\
{\color{myblue}\lprojb[1]}\arrow[thin,myblue,->]{r}[xshift=-.11cm]{\cdot\obstuff{1}\scalebox{.8}{$\to$}\obstuff{2}} \& {\color{myred}\lproj[2]}\arrow[thin,densely dashed,myred,->]{ru}{\cdot\obstuff{2}\scalebox{.8}{$\to$}\obstuff{1}}\arrow[thin,densely dashed,myred,->]{rd}[swap]{\cdot\text{-}\obstuff{2}\scalebox{.8}{$\to$}\obstuff{3}} \& \phantom{.} \& {\color{myred}\lproj[2]}\arrow[thin,myred,->]{r}[xshift=-.11cm]{\cdot\obstuff{2}\scalebox{.8}{$\to$}\obstuff{1}} \& {\color{myblue}\lprojb[1]}\arrow[thin,myblue,->>]{r}{} \& {\color{myblue}\lsimple[1]}
\\
\phantom{.} \& \phantom{.} \& {\color{mypurple}\lproj[3]}\arrow[thin,mypurple,->]{ru}[swap]{\cdot\obstuff{3}\scalebox{.8}{$\to$}\obstuff{2}} \& \phantom{.} \& \phantom{.} \& \phantom{.}
\end{tikzcd}};
\endxy
\quad\text{and}\quad
\xy
(0,0)*{
\begin{tikzcd}[ampersand replacement=\&,row sep=tiny,column sep=small,arrows={shorten >=-.5ex,shorten <=-.5ex},labels={inner sep=.05ex}]
\phantom{.} \& \phantom{.} \& {\color{myblue}\lprojb[1]}\arrow[thin,myblue,->]{rd}[xshift=-.11cm]{\cdot\obstuff{1}\scalebox{.8}{$\to$}\obstuff{2}} \& \phantom{.} \& \phantom{.}
\\
{\color{myblue}\lprojb[1]}\arrow[thin,myblue,->]{r}[xshift=-.11cm]{\cdot\obstuff{1}\scalebox{.8}{$\to$}\obstuff{2}} \& {\color{myred}\lproj[2]}\arrow[thin,densely dashed,myred,->]{ru}{\cdot\obstuff{2}\scalebox{.8}{$\to$}\obstuff{1}}\arrow[thin,densely dashed,myred,->]{rd}[swap]{\cdot\text{-}\obstuff{2}\scalebox{.8}{$\to$}\obstuff{3}} \& \phantom{.} \& {\color{myred}\lproj[2]}\arrow[thin,myred,->>]{r}{} \& {\color{myred}\lsimple[2]}
\\
\phantom{.} \& \phantom{.} \& {\color{mypurple}\lproj[3]}\arrow[thin,mypurple,->]{ru}[swap]{\cdot\obstuff{3}\scalebox{.8}{$\to$}\obstuff{2}} \& \phantom{.} \& \phantom{.}
\end{tikzcd}};
\endxy
\]
\[
\text{and}\quad
\begin{tikzcd}[ampersand replacement=\&,row sep=tiny,column sep=small,arrows={shorten >=-.5ex,shorten <=-.5ex},labels={inner sep=.05ex}]
{\color{myblue}\lprojb[1]}\arrow[thin,myblue,->]{r}[xshift=-.11cm]{\cdot\obstuff{1}\scalebox{.8}{$\to$}\obstuff{2}} \& {\color{myred}\lproj[2]}\arrow[thin,myred,->]{r}[xshift=-.11cm]{\cdot\obstuff{2}\scalebox{.8}{$\to$}\obstuff{3}} 
\& {\color{mypurple}\lproj[3]}\arrow[thin,mypurple,->>]{r}{} \& {\color{mypurple}\lsimple[3]}
\end{tikzcd},
\]
where we calculate the resolutions 
for all simple $\bzigzag(\typeA[3])$-modules. 
(In this case all resolutions are finite, but of different length. 
We will see below that this is in fact always the case for the type $\typeA[]$ graphs 
with vertex condition imposed on one leaf.)
\end{example}

\subsection{The case of \texorpdfstring{$\zigzag$}{Z}}\label{subsec:koszul-zigzag}

\subsubsection{Construction}\label{subsubsec:koszul-construction}

We first show abstractly that $\zigzag$ is Koszul in case $\Gg$ 
is not a type $\typeADE$ graph. Then we construct the linear projective
resolution explicitly using a non-terminating algorithm motivated by Chebyshev polynomials. 

Before showing koszulity we make the following observation, 
which is inspired by \cite[Section 2]{b-cube-zero}. 
Let us write $\algstuff{Q}(\placeholder)$ for the projective cover of a module. 
If $\algstuff{K}_{0}$ is a $\zigzag$-module which has a radical filtration of length 
$2$ with multiplicity vectors $\underline{a}_{0}$ of $\algstuff{K}_{0}/\mathrm{rad}(\algstuff{K}_{0})$, 
respectively $\underline{b}_{0}$ of $\mathrm{rad}(\algstuff{K}_{0})/\mathrm{rad}^{2}(\algstuff{K}_{0})$, 
then we have a short exact sequence
$\algstuff{K}_{1}\hookrightarrow\algstuff{Q}(\algstuff{K}_{0})\twoheadrightarrow\algstuff{K}_{0}$,
where $\underline{a}_{1}=\amatrix\underline{a}_{0}-\underline{b}_{0}$, 
respectively $\underline{b}_{1}=\underline{a}_{0}$, 
are the corresponding multiplicity vectors of the radical filtration of $\algstuff{K}_{1}$.
Further, if $\algstuff{K}_{0}$ is graded, then so 
is $\algstuff{K}_{1}$. 

Assume that $\underline{a}_{1}\neq 0$.
If $\algstuff{K}_{0}$ 
is generated in degree $0$ and its radical filtration 
is equal to its grading filtration, then $\algstuff{K}_{1}$ is 
generated in degree $1$, and 
its radical filtration is also equal to its grading filtration. 
This holds since $\mathrm{rad}(\algstuff{K}_{1})$ lies 
inside $\mathrm{rad}^2(\algstuff{Q}(\algstuff{K}_{0}))$, 
hence is of degree $2$, and $\underline{a}_1$ agrees with the multiplicities 
of the kernel of $\algstuff{Q}(\algstuff{K}_{0})\twoheadrightarrow\algstuff{K}_{0}$ 
in degree $1$. Thus, the filtrations agree 
and $\algstuff{K}_{1}$ is generated in degree $1$.
Hence, as long as $\underline{a}_{t}\neq 0$, 
one can produce $\zigzag$-modules $\algstuff{K}_{t}$ for all $t\in\Z_{\geq 0}$ 
in the same manner having the same properties.

Now come some of our main players in this section, the \textit{Chebyshev polynomials} (of the second kind). 
They are defined via the recursion
\begin{gather}\label{eq:cp-recursion-formula}
U_{-1}(\xpar)=0,
\quad
U_{0}(\xpar)=1,
\quad
U_{t}(\xpar)=\xpar U_{t{-}1}(\xpar)-U_{t{-}2}(\xpar),
\text{ for }t\in\Z_{\geq 1}.
\end{gather}
Having the polynomials defined, observe that, 
for $t\in\Z_{\geq 0}$, the $t^{\mathrm{th}}$ multiplicity vectors of 
the radical filtration of $\algstuff{K}_{t}$ are given by
\begin{gather}\label{eq:cp-recursion}
\begin{pmatrix}
\underline{a}_{t}
\\
\underline{b}_{t}
\end{pmatrix}
=
\begin{pmatrix}
U_{t}(\amatrix) & -U_{t{-}1}(\amatrix)
\\
U_{t{-}1}(\amatrix) & -U_{t{-}2}(\amatrix)
\end{pmatrix}
\begin{pmatrix}
\underline{a}
\\
\underline{b}
\end{pmatrix},
\end{gather}
where we let $U_{-2}(\xpar)=0$ in case $t=0$.

\begin{proposition}\label{proposition:koszul-abstract}
If $\Gg$ is not a type $\typeADE$ graph, then
$\zigzag$ is Koszul.
\end{proposition}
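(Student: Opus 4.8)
The plan is to run the explicit construction set up just before this proposition, starting from the simple module $\lsimple$ (concentrated in degree $0$), and to show that it never stalls. So fix a vertex $\ii$ and put $\algstuff{K}_0=\lsimple$; this is generated in degree $0$ with its (trivial) radical filtration equal to its grading filtration, so $\underline{a}_0=e_{\ii}$ and $\underline{b}_0=0$, where $e_{\ii}$ is the standard basis vector at $\ii$. Define $\algstuff{K}_{t+1}=\ker\big(\algstuff{Q}(\algstuff{K}_t)\twoheadrightarrow\algstuff{K}_t\big)$ as above. By the discussion preceding the proposition, as long as $\underline{a}_t\neq 0$ the module $\algstuff{K}_t$ is again generated in degree $t$ with radical filtration of length $2$ equal to its grading filtration, and then the $\algstuff{Q}(\algstuff{K}_t)$, placed in homological degree $t$, assemble into a linear projective resolution of $\lsimple$. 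Hence it suffices to prove $\underline{a}_t\neq 0$ for all $t\in\Z_{\geq 0}$.

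By \eqref{eq:cp-recursion} (applied with $\underline{a}=e_{\ii}$ and $\underline{b}=0$) we have $\underline{a}_t=U_t(\amatrix)e_{\ii}$, so the task reduces to the purely spectral statement that $U_t(\amatrix)e_{\ii}\neq 0$ for every vertex $\ii$ and every $t\geq 0$. Since $\Gg$ is finite and connected, $\amatrix$ is an irreducible non-negative matrix, so the Perron--Frobenius theorem provides an entrywise positive vector $v$ with $\amatrix v=\rho v$, where $\rho=\rho(\amatrix)$ is the spectral radius. Because $\Gg$ is not a finite type $\typeADE$ graph, the spectral characterisation of finite and affine type $\typeADE$ graphs recalled in the proof of \fullref{lemma:bipartite} (see \cite{sm-ADE}, \cite[Section 3.1.1]{bh-graphs}) forces $\rho\geq 2$.

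It then remains to observe that $U_t(x)>0$ for all $x\geq 2$ and all $t\geq 0$, which is immediate from \eqref{eq:cp-recursion-formula} by induction on $t$: one has $U_0(x)=1$ and $U_1(x)=x\geq 2$, and if $0<U_{t-2}(x)<U_{t-1}(x)$ then $U_t(x)=xU_{t-1}(x)-U_{t-2}(x)\geq 2U_{t-1}(x)-U_{t-2}(x)>U_{t-1}(x)>0$, so $(U_t(x))_{t\geq 0}$ is positive and strictly increasing. (Equivalently, $U_t(2\cosh\varphi)=\sinh((t+1)\varphi)/\sinh\varphi$ for $\rho=2\cosh\varphi\geq 2$.) Since $\amatrix$, and hence $U_t(\amatrix)$, is symmetric, $(U_t(\amatrix)e_{\ii})^{\mathrm{T}}v=e_{\ii}^{\mathrm{T}}U_t(\amatrix)v=U_t(\rho)\,v_{\ii}>0$ as $v_{\ii}>0$; in particular $U_t(\amatrix)e_{\ii}\neq 0$. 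Feeding this back into the construction yields the desired (infinite) linear projective resolution of each simple, so $\zigzag$ is Koszul.

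The only point requiring care is that each $\algstuff{K}_t$ genuinely carries the claimed structure — generated in degree $t$, radical length $2$, radical filtration equal to the grading filtration — but this is exactly what the discussion preceding the proposition establishes; inside the proof itself the only real inputs are the Perron--Frobenius bound $\rho(\amatrix)\geq 2$ for non-$\typeADE$ graphs and the elementary positivity of the Chebyshev polynomials on $[2,\infty)$.
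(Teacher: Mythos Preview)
Your proof is correct and follows the same global architecture as the paper: set up the successive kernels $\algstuff{K}_t$, observe that linearity persists provided $\underline{a}_t=U_t(\amatrix)e_{\ii}\neq 0$, and then establish this nonvanishing via the spectral characterisation of $\typeADE$ graphs together with Perron--Frobenius theory. The difference lies in how the nonvanishing is verified. The paper argues that $U_t(\amatrix)$ itself is a symmetric, irreducible, non-negative matrix---the non-negativity being imported from an external categorification result \cite{mt-soergel}---and then applies Perron--Frobenius to $U_t(\amatrix)$ to conclude it has no zero columns. Your route is more direct and self-contained: you apply Perron--Frobenius only to $\amatrix$, obtaining a strictly positive eigenvector $v$ at the spectral radius $\rho\geq 2$, and then pair $U_t(\amatrix)e_{\ii}$ against $v$ using the symmetry of $\amatrix$ and the elementary positivity of $U_t$ on $[2,\infty)$. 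This sidesteps the external input entirely. What the paper's approach buys in return is the non-negativity of all entries of $U_t(\amatrix)$, which you do not prove (and do not need for Koszulity); that non-negativity is in any case forced a~posteriori, since the columns of $U_t(\amatrix)$ are multiplicity vectors of genuine modules.
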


Recall that a resolution as in \eqref{eq:koszul-reso} 
is called \textit{minimal} if no indecomposable summand of $\algstuff{Q}_{t}$ 
lies in the kernel for all $t\in\Z_{\geq 0}$.

\begin{proof}
We first note that the only graphs $\Gg$ such that $U_{t}(\amatrix)=0$ 
holds for some $t\in\Z_{\geq 0}$ are type $\typeADE$ graphs. 
This follows since the roots of 
the Chebyshev polynomial $U_{t}(X)$ are known to be all of the form  
$2\cos(k/(t+1)\pi)\in]-2,2[$ for $k\in\{1,\dots,t\}$, while, 
by \cite{sm-ADE} or \cite[Section 3.1.1]{bh-graphs}, 
$\Gg$ has a Perron--Frobenius eigenvalue $\lambda\geq 2$. 
This also implies that $U_{t}(\amatrix)$ is irreducible (meaning 
that, for all $\ii,\ii[j]$, there exists $N_{\ii\ii[j]}\in\Z_{\geq 0}$ 
such that the $\ii$-$\ii[j]$ position of $U_{t}(\amatrix)^{N_{\ii\ii[j]}}$ is in $\Z_{>0}$).
Further, that $U_{t}(\amatrix)$ has non-negative entries follows
from categorification, cf. \cite[End of Section 5.1]{mt-soergel} 
(which uses the polynomial $2\imatrix+\amatrix$). Hence, $U_{t}(\amatrix)$ is a 
symmetric, irreducible, non-negative matrix, 
and we can apply Perron--Frobenius theory to find 
an eigenvector of $U_{t}(\amatrix)$ with entries and associated 
eigenvalue from $\R_{>0}$. But having such 
an eigenvector implies that no columns or rows can be zero.
Thus, if we start with $\algstuff{K}_{0}=\lsimple[\ii]$, then $\underline{a}_{t}$ 
in \eqref{eq:cp-recursion} will never be the zero vector.
Summarized, to produce a minimal projective resolution of $\algstuff{K}_{0}=\lsimple[\ii]$ 
we successively resolve the module $\algstuff{K}_{t}$ with its projective cover 
$\algstuff{Q}(\algstuff{K}_{t})$ with kernel $\algstuff{K}_{t{+}1}$. By the arguments 
above, this minimal projective resolution will be linear, and the decomposition of 
$\algstuff{Q}(\algstuff{K}_{t})$ into indecomposables 
is given by the $\ii^{\mathrm{th}}$ column of $U_{t}(\amatrix)$.
\end{proof} 

We now construct the linear projective resolutions explicitly.

\begin{definition}\label{definition:koszul-resolution-graph}
Fix a graph $\Gg$.
A \textit{resolution graph} $\Rg=(V,E)$ associated to $\Gg$ is a directed graph, whose 
vertex set $V=\bigcup_{t\in\Z_{\geq 0}}V_{t}$ is a disjoint union of 
finite sets $V_{t}$ such that each vertex $v(\ii)$
is labeled by a vertex $\ii$ of $\Gg$. Moreover, the edge 
set $E=\bigcup_{t\in\Z_{\geq 1}}E_{t}$ 
is a disjoint union of finite sets $E_{t}$ such that 
$E_{t}$ contains only edges $e(z)\colon v(\ii)\to v(\ii[j])$ from $V_{t}$ to $V_{t{+}1}$ 
that are labeled by some $z\in\K$.
The two sets $V_{t},E_{t}$ are called the 
\textit{$t^{\mathrm{th}}$ level of $\Rg$}.

A level $\sshift$ resolution graph $\Rg_{\sshift}$ is the same data, but 
$V_{t}=\emptyset=E_{t}$ for all $t\in\Z_{\geq\sshift}$.

Further, if we fix $\ii$, then we also consider monochrome sets $V_{t{-}1}(\ii)=\{v(\ii)\in V_{t{-}1}\}$ 
(``colored'' only by $\ii$), 
$E_{t}(\ii)=\{e(z)\colon v(\ii)\to v(\ii[j])\mid v(\ii)\in V_{t{-}1}\}$ and 
$V_{t}(\ii)=\{v(\ii[j])\mid v(\ii[j])\text{ is a target of }e(z)\in E_{t}(\ii)\}$, and the graph 
$\Rg_{t}(\ii)=(V_{t{-}1}(\ii)\cup V_{t}^{\ii},E_{t}(\ii))$.
Denote by $\Rg_{t}(\ii)=\bigcup_{r}\Rg_{t}(\ii,r)$ its decomposition 
into connected components $\Rg_{t}(\ii,r)=(V_{t{-}1}(\ii,r)\cup V_{t}(\ii,r),E_{t}(\ii,r))$ 
(seen as an unoriented graph).
\end{definition}

A \textit{successor of a vertex $v(\ii)$} in a directed graph is a vertex $v(\ii[j])$ 
such that there is a directed path from $v(\ii)$ to $v(\ii[j])$. Similarly 
in case of successors of a set of vertices.

\begin{definition}\label{definition:koszul-weighting}
A \textit{$\K$-weighting} of a resolution graph and a fixed set of vertices $V^{\prime}$ of it
are elements $b_{v(\ii[j])}\in\K$ for all successors $v(\ii[j])$ of $V^{\prime}$ such that 
for each vertex $v(\ii)\in X$ we have
\[
{\textstyle\sum_{v(\ii[j])}}\;
z_{v(\ii[j])}b_{v(\ii[j])}=0,
\]
where $z_{v(\ii[j])}\in\K$ is the label of $e(z)\colon v(\ii)\to v(\ii[j])$, 
and the sum is over all successors $v(\ii[j])$ of $v(\ii)$. We denote the 
vector space of all $\K$-weightings of $V^{\prime}$ by $\mathrm{We}(V^{\prime})$.
\end{definition}

\IncMargin{1em}
\begin{algorithm}
\SetKwData{Left}{left}\SetKwData{This}{this}\SetKwData{Up}{up}
\SetKwFunction{Union}{Union}\SetKwFunction{FindCompress}{FindCompress}
\SetKwInOut{Input}{input}\SetKwInOut{Output}{output}

 \Input{a graph $\Gg$ and a fixed vertex $\ii$ of it\;}
 \Output{a resolution graph $\Rg=\Rg(\ii)=(\bigcup_{s}V_{\sshift},\bigcup_{s}E_{\sshift})$ for $\Gg$\;}
 \BlankLine
 \textit{initialization ($\sshift=0$)}, let $\Rg_{0}=(V_{0},E_{0})$ be the 
 level $0$ resolution graph with $V_{0}=\{\ii\}$, $E_{0}=\emptyset$, 
 and set $V_{t}=\emptyset=E_{t}$ for all $t\in\Z_{\geq -1}$, $t\neq 0$\;
 \For{$\sshift\in\Z_{\geq 1}$}{
 \tcc*[h]{recall that the vertices of $\Gg$ are numbered, say $\{\ii[1],\dots,\ii[n]\}$}\;
 \For{$\ii[j]=\ii[1]$ \KwTo $\ii[n]$}{
 \For{$\Rg_{\sshift}^{\ii}(r)$ connected component}{
 \tcc*[h]{the potential solutions}\;
 fix a basis $\mathbb{B}(\sshift,\ii,r)$ of $\mathrm{We}(V_{\sshift{-}1}(\ii,r))$\;
 {\tcc*[h]{add them to $\Rg_{\sshift}$; called fork moves}\;
 \For{$b=(b_{1},\dots,b_{l})\in\mathbb{B}(\sshift,\ii,r)$}
 {add a vertex $v(\ii)$ to $V_{\sshift{+}1}$ 
 and edges $e(b_{k})\colon v(\ii[j]_{k}) \to v(\ii)$ to $E_{\sshift}$\;}
 }}}
 \tcc*[h]{singleton moves}\;
 \For{$v(\ii[j])\in V_{\sshift{-}1}$}
 {add a vertex $v(\ii[k])$ to $V_{\sshift}$ for all neighbors of $\ii[j]$ in $\Gg$ which are not neighbors of $v(\ii[k])$ in $\Rg$, 
 and an edge $e(1)\colon v(\ii[k])\to v(\ii[j])$ to $E_{\sshift}$\;}
 }
 \caption{The Chebyshev algorithm a.k.a. producing 
 linear projective resolutions.}\label{algorithm:koszul}
\end{algorithm}
\DecMargin{1em}

\fullref{algorithm:koszul} is, by birth, well-defined and depends on some choices, 
since in each step there are a few systems of linear equations 
(with unknowns $b_{v(\ii[j])}$) one needs to solve 
and choose a basis for its solution space. However, we will see in 
\fullref{lemma:koszul-multiplicities} that the vertex sets are independent of the involved choices. 
Note further that \fullref{algorithm:koszul} usually does not terminate, 
and output is to be understood that 
we can stop the algorithm at any level and get arbitrary long 
parts of the resolutions.

\begin{example}\label{example:algorithm}
In this example, for readability, we omit the orientation of the edges 
(we always read left to right and underline the starting vertex). 
We also write $\ii$ short for $v(\ii)$ in illustrations, and if we do not specify 
the edge labels, then they are $1$, by convention, while we write $-$ instead of $-1$ for short.

The first example is a type $\atypeA[2]$ graph where we choose 
the vertex $\ii[0]$. Then one gets
\[
\xy
(0,0)*{
\begin{tikzcd}[ampersand replacement=\&,row sep=tiny,column sep=small,arrows={shorten >=-.5ex,shorten <=-.5ex},labels={inner sep=.05ex}]
\ii[0]\arrow[thin,-]{rd}{} \& \phantom{.} \& \phantom{.} \& \phantom{.}
\\
\phantom{.} \& {\color{myred}\ii[2]}\arrow[thin,myred,-]{rd}{} \& \phantom{.} \& \phantom{.}
\\
{\color{myblue}\ii[1]}\arrow[thin,densely dashed,myblue,-]{rd}[description]{\text{-}}\arrow[thin,densely dashed,myblue,-]{ru}{} \& \phantom{.} \& {\color{myblue}\ii[1]}\arrow[thin,myblue,-]{rd}{} \& \phantom{.}
\\
\phantom{.} \& \ii[0]\arrow[thin,densely dashed,-]{rd}[description]{\text{-}}\arrow[thin,densely dashed,-]{ru}{} \& \phantom{.} \& \underline{\ii[0]}
\\
{\color{myred}\ii[2]}\arrow[thin,densely dashed,myred,-]{rd}{}\arrow[thin,densely dashed,myred,-]{ru}{} \& \phantom{.} \& {\color{myred}\ii[2]}\arrow[thin,myred,-]{ru}{} \& \phantom{.}
\\
\phantom{.} \& {\color{myblue}\ii[1]}\arrow[thin,myblue,-]{ru}{} \& \phantom{.} \& \phantom{.}
\\
\ii[0]\arrow[thin,-]{ru}{} \& \phantom{.} \& \phantom{.} \& \phantom{.}
\end{tikzcd}};
\endxy,
\]
where the dashed lines come from 
fork moves, while the straight lines come from singleton moves, cf. \fullref{example-koszul-affineA}. 
Note that all $\K$-weighting spaces are one-dimensional in this case, 
and the above is up to scaling unique.
 
Let us now do an example with two very different choices along the way. 
So let $\Gg$ be a type $\typeD[5]$ graph with starting vertex ${\color{myred}2}$, and we get
\[
\xy
(0,0)*{
\begin{tikzcd}[ampersand replacement=\&,row sep=tiny,column sep=small,arrows={shorten >=-.5ex,shorten <=-.5ex},labels={inner sep=.05ex}]
\phantom{.} \& \phantom{.} \& {\color{myorange}\ii[5]}\arrow[thin,myorange,-]{rd}{} \& \phantom{.} \& {\color{myred}\ii[2]}\arrow[thin,densely dashed,myred,-]{rd}{}\arrow[thin,densely dashed,myred,-]{rddd}[description]{\text{-}} \& \phantom{.} \& \phantom{.}
\\
\phantom{.} \& {\color{mypurple}\ii[3]}\arrow[thin,densely dashed,mypurple,-]{rd}{}\arrow[thin,densely dashed,mypurple,-]{ru}{}\arrow[thin,densely dashed,mypurple,-]{rddd}[description]{\text{-}}  \& \phantom{.} \& {\color{mypurple}\ii[3]}\arrow[thin,densely dashed,mypurple,-]{rd}{}\arrow[thin,densely dashed,mypurple,-]{ru}{} \& \phantom{.} \& {\color{myblue}\ii[1]}\arrow[thin,myblue,-]{rd}{} \& \phantom{.}
\\
{\color{myred}\ii[2]}\arrow[thin,densely dashed,myred,-]{rd}{}\arrow[thin,densely dashed,myred,-]{ru}{} \& \phantom{.} \& {\color{mygreen}\ii[4]}\arrow[thin,densely dashed,mygreen,-]{ru}[description]{\text{-}}\arrow[thin,densely dashed,mygreen,-]{rd}{} \& \phantom{.} \& {\color{mygreen}\ii[4]}\arrow[thin,mygreen,-]{rd} \& \phantom{.} \& \underline{{\color{myred}\ii[2]}}
\\
\phantom{.} \& {\color{myblue}\ii[1]}\arrow[thin,myblue,-]{rd}{} \& \phantom{.} \& {\color{mypurple}\ii[3]}\arrow[thin,densely dashed,mypurple,-]{ru}{}\arrow[thin,densely dashed,mypurple,-]{rd}[description]{\text{-}} \& \phantom{.} \& {\color{mypurple}\ii[3]}\arrow[thin,mypurple,-]{ru}{} \& \phantom{.}
\\
\phantom{.} \& \phantom{.} \& {\color{myred}\ii[2]}\arrow[thin,myred,-]{ru}{} \& \phantom{.} \& {\color{myorange}\ii[5]}\arrow[thin,myorange,-]{ru}{} \& \phantom{.} \& \phantom{.}
\end{tikzcd}};
\endxy
\quad\text{or}\quad
\xy
(0,0)*{
\begin{tikzcd}[ampersand replacement=\&,row sep=tiny,column sep=small,arrows={shorten >=-.5ex,shorten <=-.5ex},labels={inner sep=.05ex}]
\phantom{.} \& \phantom{.} \& {\color{myorange}\ii[5]}\arrow[thin,densely dashed,myorange,-]{rd}{}\arrow[thin,densely dashed,myorange,-]{rddd}[description,near start]{\text{-}} \& \phantom{.} \& {\color{myred}\ii[2]}\arrow[thin,densely dashed,myred,-]{rd}{}\arrow[thin,densely dashed,myred,-]{rddd}[description]{\text{-}} \& \phantom{.} \& \phantom{.}
\\
\phantom{.} \& {\color{mypurple}\ii[3]}\arrow[thin,densely dashed,myred,-]{ru}[description]{\text{-}}\arrow[thin,densely dashed,myred,-]{rd}[description]{\text{-}}\arrow[thin,densely dashed,myred,-]{rddd}{} \& \phantom{.} \& {\color{mypurple}\ii[3]}\arrow[thin,densely dashed,mypurple,-]{ru}[description]{2}\arrow[thin,densely dashed,mypurple,-]{rd}{}\arrow[thin,densely dashed,mypurple,-]{rddd}{} \& \phantom{.} \& {\color{myblue}\ii[1]}\arrow[thin,myblue,-]{rd}{} \& \phantom{.}
\\
{\color{myred}\ii[2]}\arrow[thin,densely dashed,myred,-]{rd}[description]{\text{-}}\arrow[thin,densely dashed,myred,-]{ru}{} \& \phantom{.} \& {\color{mygreen}\ii[4]}\arrow[thin,densely dashed,mygreen,-]{ru}[description]{2}\arrow[thin,densely dashed,mygreen,-]{rd}[description,near start]{\text{-}} \& \phantom{.} \& {\color{mygreen}\ii[4]}\arrow[thin,mygreen,-]{rd} \& \phantom{.} \& \underline{{\color{myred}\ii[2]}}
\\
\phantom{.} \& {\color{myblue}\ii[1]}\arrow[thin,myblue,-]{rd}{} \& \phantom{.} \& {\color{mypurple}\ii[3]}\arrow[thin,densely dashed,mypurple,-]{rd}{}\arrow[thin,densely dashed,mypurple,-]{ru}[description,near end]{2}\arrow[thin,densely dashed,mypurple,-]{ruuu}[description,near end]{3} \& \phantom{.} \& {\color{mypurple}\ii[3]}\arrow[thin,mypurple,-]{ru}{} \& \phantom{.}
\\
\phantom{.} \& \phantom{.} \& {\color{myred}\ii[2]}\arrow[thin,densely dashed,myred,-]{ru}[description]{\text{-}2}\arrow[thin,densely dashed,myred,-]{ruuu}[description,near start]{3} \& \phantom{.} \& {\color{myorange}\ii[5]}\arrow[thin,myorange,-]{ru}{} \& \phantom{.} \& \phantom{.}
\end{tikzcd}};
\endxy.
\]
Note the following two crucial observation: First, although we made quite different choices 
above, the multiplicities in each column are equal, and given by the Chebyshev recursion 
\eqref{eq:cp-recursion}. Second, \fullref{algorithm:koszul} 
terminates for the finite type $\typeADE$ graphs, but not for the affine 
ones. We will see that both is always the case.
\end{example}

If we have a minimal linear projective resolution as in \eqref{eq:koszul-reso} 
which stops at $\algstuff{Q}_{\sshift}$ for some $\sshift\in\Z_{\geq 0}$, 
then we say its of length $\sshift$.

\begin{definition}\label{definition:koszul-algo-output}
Let $\sshift\in\Z_{\geq 0}$, and let $\Rg_{\sshift}=(\bigcup_{t}V_{t},\bigcup_{t}E_{t})$ be 
the output of \fullref{algorithm:koszul} of which we assume that 
it has not terminated. Then we define 
a complex of $\zigzag$-modules:
\smallskip
\begin{enumerate}

\setlength\itemsep{.15cm}

\renewcommand{\theenumi}{(\ref{definition:koszul-algo-output}.a)}
\renewcommand{\labelenumi}{\theenumi}

\item \label{enum:koszul1} For $0\leq t\leq\sshift$ we let $\algstuff{Q}_{t}=\bigoplus_{v(\ii)\in V_{t}}\lproj[i]$, i.e.
we identify the vertices $v(\ii)$ in the $t^{\mathrm{th}}$ level with the projectives
$\zigzag$-modules $\lproj[i]$, and take direct sums.

\renewcommand{\theenumi}{(\ref{definition:koszul-algo-output}.b)}
\renewcommand{\labelenumi}{\theenumi}

\item \label{enum:koszul2} For $0\leq t\leq\sshift{-}1$ we let 
$\phi\colon\algstuff{Q}_{t{-}1}\to\algstuff{Q}_{t}=\bigoplus_{e(z)\colon v(\ii)\to v(\ii[j])\in E_{t}}(\cdot z\pathx{i}{j})$, i.e.
we identify the edges $e(z)\colon v(\ii)\to v(\ii[j])$ in the $t^{\mathrm{th}}$ level with the
$\zigzag$-equivariant maps given by post-composition with $z\pathx{i}{j}$, and take matrices.

\end{enumerate}
\smallskip

We write $\mathrm{Reso}(\Rg_{\sshift})$ for this complex.
\end{definition}

\begin{lemma}\label{lemma:koszul-multiplicities}
Assume \fullref{algorithm:koszul} does not terminate before the $\sshift^{\mathrm{th}}$ level, 
and $\mathrm{Reso}(\Rg_{\sshift})$ is a minimal linear projective resolution of length $\sshift$ 
with multiplicities given by \eqref{eq:cp-recursion}. If $U_{\sshift}(\amatrix)\neq 0$, 
then $\mathrm{Reso}(\Rg_{\sshift{+}1})$ is minimal linear projective 
resolution of length $\sshift+1$.
\end{lemma}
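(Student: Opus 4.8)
The plan is to run an induction on the level $\sshift$, where the inductive hypothesis is precisely that $\mathrm{Reso}(\Rg_{\sshift})$ is a minimal linear projective resolution of length $\sshift$ whose term-by-term multiplicities are given by the Chebyshev recursion \eqref{eq:cp-recursion}. The statement of the lemma is exactly the inductive step, so the work is to show that \emph{one more run} of the for-loop body of \fullref{algorithm:koszul} (the fork moves followed by the singleton moves at level $\sshift$) produces a $\algstuff{Q}_{\sshift{+}1}$ together with a map $\algstuff{Q}_{\sshift}\to\algstuff{Q}_{\sshift{+}1}$ so that the augmented complex $\mathrm{Reso}(\Rg_{\sshift{+}1})$ is again exact, minimal, and linear. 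The abstract existence of such a resolution is already guaranteed by the discussion preceding \fullref{proposition:koszul-abstract}: the kernel $\algstuff{K}_{\sshift{+}1}$ of $\algstuff{Q}(\algstuff{K}_{\sshift})\twoheadrightarrow\algstuff{K}_{\sshift}$ is generated in degree $\sshift{+}1$ with radical filtration equal to its grading filtration, and its head multiplicities are $\underline{a}_{\sshift{+}1}=\amatrix\underline{a}_{\sshift}-\underline{b}_{\sshift}=U_{\sshift{+}1}(\amatrix)\underline{a}-U_{\sshift}(\amatrix)\underline{b}$, which is nonzero by the hypothesis $U_{\sshift}(\amatrix)\neq 0$ together with the Perron--Frobenius argument from \fullref{proposition:koszul-abstract}. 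So the whole content is that \fullref{algorithm:koszul} correctly \emph{computes} this abstractly-existing kernel and its projective cover.

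First I would make explicit the identification between $\mathrm{Reso}(\Rg_{\sshift})$ and the tower $\algstuff{K}_0\leftarrow\algstuff{Q}(\algstuff{K}_0)\leftarrow\cdots$: by \fullref{definition:koszul-algo-output}, $\algstuff{Q}_{\sshift}=\bigoplus_{v(\ii)\in V_{\sshift}}\lproj[i]$ and the map $\algstuff{Q}_{\sshift}\to\algstuff{Q}_{\sshift{-}1}$ is the matrix of post-compositions $\cdot z\pathx{i}{j}$; since $\mathrm{Reso}(\Rg_{\sshift})$ is assumed to be a minimal linear resolution of length $\sshift$, its image $\algstuff{K}_{\sshift}$ inside $\algstuff{Q}_{\sshift{-}1}$ is generated in degree $\sshift$ with $\mathrm{rad}(\algstuff{K}_{\sshift})$ in degree $\sshift{+}1$. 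The generators of $\algstuff{K}_{\sshift}$ (its degree-$\sshift$ part, i.e. its head) are, on each summand $\lproj[j]$ of $\algstuff{Q}_{\sshift{-}1}$ sitting over a vertex $v(\ii[j])\in V_{\sshift{-}1}$, certain elements of degree $1$. By \fullref{lemma:basics3} the degree-$1$ part of $\lproj[j]$ is spanned by the $\pathx{i}{j}$ for $\upathx{i}{j}$ — precisely the data encoding a neighbor of $\ii[j]$ in $\Gg$. An element of $\algstuff{K}_{\sshift}$ of degree $\sshift$ is thus a $\K$-linear combination $\sum_{v(\ii[j])}\sum_{\upathx{i}{j}} c_{i,j}\,\pathx{i}{j}$, which lands in the \emph{kernel} of the previous differential iff for each source vertex $v(\ii)$ feeding into $v(\ii[j])$ the coefficients satisfy the cancellation condition $\sum z_{v(\ii[j])}b_{v(\ii[j])}=0$ — exactly \fullref{definition:koszul-weighting}. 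So $\mathrm{We}(V_{\sshift{-}1}(\ii,r))$ is literally (a component of) the space of degree-$\sshift$ cycles ending at copies of $\lsimple[\ii]$, and a basis of it corresponds to a minimal set of generators of $\algstuff{K}_{\sshift}$ whose head is supported at $\ii$. This is the identification that makes the fork moves produce exactly one new vertex $v(\ii)\in V_{\sshift{+}1}$ for each basis weighting, with the edge labels recording the combination that must be hit; the singleton moves then account for the remaining generators of $\algstuff{K}_{\sshift}$, namely those of the form $\pathx{i}{j}$ coming from a single neighbor that does not already cancel against anything (the ``trivial'' relation $\pathxx{k}{j}{i}\cdot$-type cancellations), ensuring every degree-$\sshift$ generator of $\algstuff{K}_{\sshift}$ is covered. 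Having matched generators, the projective cover of $\algstuff{K}_{\sshift}$ is $\bigoplus_{v(\ii)\in V_{\sshift{+}1}}\lproj[i]$, which is exactly $\algstuff{Q}_{\sshift{+}1}$, and the augmentation map is surjective onto $\algstuff{K}_{\sshift}$ by construction; exactness at $\algstuff{Q}_{\sshift}$ is then immediate, linearity holds because all edge maps are post-composition with a single path $\pathx{i}{j}$ (degree $1$), and minimality holds because the cover of $\algstuff{K}_{\sshift}$ is by definition minimal (equivalently: the abstract tower is minimal, as noted before \fullref{proposition:koszul-abstract}).

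The multiplicity claim then falls out: the number of copies of $\lproj[i]$ in $\algstuff{Q}_{\sshift{+}1}$ equals $\dim\mathrm{We}(V_{\sshift}(\ii,\placeholder))$ plus the singleton contribution, which by the preceding paragraph is $\dim$ of the degree-$(\sshift{+}1)$ head of $\algstuff{K}_{\sshift{+}1}$ at $\ii$, i.e. the $\ii$-th entry of $\underline{a}_{\sshift{+}1}$; and \eqref{eq:cp-recursion} already records $\underline{a}_{\sshift{+}1}=U_{\sshift{+}1}(\amatrix)\underline{a}-U_{\sshift}(\amatrix)\underline{b}$, so the new column of multiplicities is the $\ii$-th column of $U_{\sshift{+}1}(\amatrix)$ when $\algstuff{K}_0=\lsimple[\ii]$, matching \fullref{proposition:koszul-abstract}. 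For independence of choices it suffices to note that while the \emph{basis} $\mathbb{B}(\sshift,\ii,r)$ of $\mathrm{We}$ is a choice, its \emph{cardinality} $\dim\mathrm{We}(V_{\sshift{-}1}(\ii,r))$ is not, and it is this dimension alone that determines $V_{\sshift{+}1}$; the edge labels of course change, but the resulting complexes are isomorphic.

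\textbf{The main obstacle} I anticipate is the careful bookkeeping in the identification of $\mathrm{We}(V_{\sshift{-}1}(\ii,r))$ with the degree-$(\sshift{+}1)$ head of the kernel $\algstuff{K}_{\sshift}$ — in particular keeping straight that the relations imposed (the $\K$-weighting conditions) are \emph{exactly} the cancellations forced by the boundedness relation \ref{enum:zigzag-1} and the zigzag relation \ref{enum:zigzag-3} of $\zigzag$, and nothing more. One must verify that a linear combination $\sum c_{i,j}\pathx{i}{j}$ lies in the image of the degree-$\sshift$ differential (rather than merely being a cycle) precisely when the weighting conditions, level by level, are satisfied; this uses \fullref{lemma:basics3} to pin down exactly which compositions $\pathxx{k}{j}{i}$ are nonzero (only when $\ii[k]=\ii$, giving $\volele_{\ii[j]}$ — which is then zero in $\bzigzag$ but not in $\zigzag$; here we are in the $\zigzag$ case so it survives) and to control the degree-$(\sshift{+}1)$ part of each $\lproj[j]$. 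The rest — exactness, linearity, minimality, the multiplicity formula — is then bookkeeping on top of the abstract construction that was already set up before \fullref{proposition:koszul-abstract}.
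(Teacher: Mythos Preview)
Your proposal is correct and takes essentially the same approach as the paper, whose proof is three sentences: since $U_{\sshift}(\amatrix)\neq 0$, the kernel $\algstuff{K}_{\sshift}$ is generated in a single degree (by the discussion preceding \fullref{proposition:koszul-abstract}), so its degree-$1$ part is semisimple, and the fork and singleton moves of \fullref{algorithm:koszul} amount precisely to choosing a basis of this semisimple module. Your write-up is a more explicit unpacking of that identification of $\mathrm{We}(V_{\sshift{-}1}(\ii,r))$ with the head of the kernel; the one extraneous ingredient is your appeal to Perron--Frobenius to force $\underline{a}_{\sshift{+}1}\neq 0$, which properly belongs to \fullref{proposition:koszul-abstract} (and is unavailable for $\typeADE$ graphs) rather than to this inductive-step lemma.
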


\begin{proof}
Because $U_{\sshift}(\amatrix)\neq 0$ we know by the same arguments as above 
that the kernel $\algstuff{K}_{\sshift}$ in generated in degree $1$. 
Thus, its degree $1$ part $\algstuff{K}_{\sshift}^{1}$ is a semisimple
$\zigzag$-module. Because $\algstuff{K}_{\sshift}^{1}$ is 
semisimple, the next step of the resolution is determined by choosing a basis 
for this $\zigzag$-module, which is precisely what \fullref{algorithm:koszul} does.
\end{proof}

\begin{lemma}\label{lemma:koszul-terminate}
\fullref{algorithm:koszul} terminates if and only if $\Gg$ is a type $\typeADE$ graph.
\end{lemma}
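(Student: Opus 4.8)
The plan is to connect termination of \fullref{algorithm:koszul} directly to the vanishing behaviour of the Chebyshev polynomials $U_{t}(\amatrix)$, using the multiplicity bookkeeping from \eqref{eq:cp-recursion} and the structural analysis already carried out for \fullref{proposition:koszul-abstract} and \fullref{lemma:koszul-multiplicities}. First I would fix a starting vertex $\ii$ and observe that, by \fullref{lemma:koszul-multiplicities} together with an induction on the level $\sshift$, as long as $U_{\sshift}(\amatrix)\neq 0$ the algorithm produces a genuine minimal linear projective resolution up to level $\sshift+1$ with $\algstuff{Q}_{\sshift}=\bigoplus_{v(\ii)\in V_{\sshift}}\lproj[i]$ having indecomposable multiplicities given by the $\ii^{\mathrm{th}}$ column of $U_{\sshift}(\amatrix)$, hence $V_{\sshift}\neq\emptyset$ precisely when that column is nonzero. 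Since $U_{t}(\amatrix)$ is a symmetric nonnegative irreducible matrix (as noted in the proof of \fullref{proposition:koszul-abstract}, using categorification for nonnegativity and Perron--Frobenius for irreducibility), whenever it is nonzero it has no zero column; so the algorithm does not terminate before level $\sshift+1$. This gives the ``only if'' direction in contrapositive form: if $\Gg$ is \emph{not} a type $\typeADE$ graph, then by the root computation $2\cos(k\pi/(t+1))\in{]-2,2[}$ versus the Perron--Frobenius eigenvalue $\lambda\geq 2$ (already spelled out in the proof of \fullref{proposition:koszul-abstract}), no $U_{t}(\amatrix)$ vanishes, so the algorithm runs forever.

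For the converse, suppose $\Gg$ \emph{is} a finite type $\typeADE$ graph. Then the largest eigenvalue of $\amatrix$ is $2\cos(\pi/h)$ where $h$ is the Coxeter number, and \emph{all} eigenvalues of $\amatrix$ are of the form $2\cos(m_{k}\pi/h)$ for the exponents $m_{1},\dots,m_{n}$ of the root system; in particular every eigenvalue of $\amatrix$ is a root of $U_{h-1}(X)$ (since the roots of $U_{h-1}$ are exactly $2\cos(k\pi/h)$ for $k=1,\dots,h-1$, and each exponent $m_{k}$ lies in $\{1,\dots,h-1\}$). Because $\amatrix$ is diagonalizable and $U_{h-1}$ annihilates each of its eigenvalues, $U_{h-1}(\amatrix)=0$. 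I would then take $\sshift$ minimal with $U_{\sshift}(\amatrix)=0$; at that level $\underline{a}_{\sshift}=0$ by \eqref{eq:cp-recursion}, so the kernel $\algstuff{K}_{\sshift}$ vanishes, $V_{\sshift}=\emptyset$, and by the level-by-level nature of \fullref{algorithm:koszul} all subsequent levels are empty as well — i.e. the algorithm terminates. (A small remark: one should check that the case $\sshift=0$, i.e. $U_{0}=1\neq 0$, does not interfere, which is clear; and that minimality of $\sshift$ together with \fullref{lemma:koszul-multiplicities} guarantees that the partial resolution is genuinely a resolution up to the point where it closes off.)

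The main obstacle, and the step deserving the most care, is the verification that $U_{h-1}(\amatrix)=0$ for finite type $\typeADE$ — equivalently, the precise identification of the spectrum of $\amatrix$ with the exponent/Coxeter-number data. This is classical (it is the statement that the eigenvalues of the Cartan/adjacency matrix of an $\typeADE$ Dynkin diagram are $2\cos(m_{k}\pi/h)$), but it requires citing the right reference; I would invoke the same sources already used in the paper, e.g. \cite{sm-ADE} or \cite[Section 3.1.1]{bh-graphs}, possibly supplemented by a standard reference for the exponents, rather than reproving it. Everything else — the inductive bootstrap that nonvanishing of $U_{\sshift}(\amatrix)$ forces nonempty $V_{\sshift+1}$, and the trivial propagation of emptiness once a level is empty — follows formally from \fullref{lemma:koszul-multiplicities}, \eqref{eq:cp-recursion}, and the Perron--Frobenius argument already in place for \fullref{proposition:koszul-abstract}. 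A secondary subtlety is making sure the argument is uniform over the choice of starting vertex $\ii$: since $U_{t}(\amatrix)$ is irreducible when nonzero, \emph{no} column is zero until the whole matrix vanishes, so the termination level $\sshift$ is independent of $\ii$, which is consistent with the second observation in \fullref{example:algorithm}.
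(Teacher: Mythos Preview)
Your approach is essentially the one taken in the paper: non-termination for non-$\typeADE$ graphs via the Perron--Frobenius argument from \fullref{proposition:koszul-abstract}, and termination for $\typeADE$ graphs via $U_{h-1}(\amatrix)=0$. Your added detail on the $\typeADE$ side (identifying the spectrum of $\amatrix$ with $2\cos(m_{k}\pi/h)$ through the exponents, then using diagonalizability) is correct and makes explicit what the paper's proof leaves as ``with $t+1$ being the Coxeter number''.

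There is, however, one inaccuracy worth flagging. When $U_{\sshift}(\amatrix)=0$, you write ``so the kernel $\algstuff{K}_{\sshift}$ vanishes, $V_{\sshift}=\emptyset$''. The kernel does \emph{not} vanish: by \eqref{eq:cp-recursion} you get $\underline{a}_{\sshift}=0$ but $\underline{b}_{\sshift}=\underline{a}_{\sshift-1}\neq 0$, so $\algstuff{K}_{\sshift}$ is a nonzero semisimple module concentrated in degree $\sshift+1$. What is true is that the degree-$\sshift$ part of $\algstuff{K}_{\sshift}$ vanishes, and since \fullref{algorithm:koszul} only adds degree-$1$ maps (equivalently, by the proof of \fullref{lemma:koszul-multiplicities} it chooses a basis for the degree-$\sshift$ piece of the kernel), it finds nothing to add and $V_{\sshift}=\emptyset$. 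So your conclusion is right, but the reasoning needs this adjustment. The distinction is conceptually relevant: it is precisely the survival of $\algstuff{K}_{\sshift}$ in degree $\sshift+1$ that obstructs linearity of the minimal projective resolution and underlies \fullref{proposition:finite-ADE-koszul}; if $\algstuff{K}_{\sshift}$ really vanished, $\zigzag$ would be Koszul in the $\typeADE$ case as well.
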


\begin{proof}
This is a direct consequence of \fullref{lemma:koszul-multiplicities} 
since (as we have already seen above) $U_{t}(\amatrix)$ contains a
zero column for some $t\in\Z_{\geq 0}$ if and only if $U_{t}(\amatrix)=0$ 
for some $t\in\Z_{\geq 0}$ if and only if $\Gg$ is a type $\typeADE$ graph 
(with $t+1$ being the Coxeter number of $\Gg$). 
\end{proof}

\begin{proposition}\label{proposition:koszul-the-reso}
If \fullref{algorithm:koszul} does not terminate, then it produces 
a minimal linear projective resolution of the $\zigzag$-module $\lsimple$ 
with multiplicities given by \eqref{eq:cp-recursion}.
\end{proposition}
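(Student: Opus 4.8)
The plan is to prove the statement by induction on the level $\sshift$, where the inductive step is supplied essentially verbatim by \fullref{lemma:koszul-multiplicities}. Concretely, I would show by induction on $\sshift\in\Z_{\geq 0}$ that $\mathrm{Reso}(\Rg_{\sshift})$ is a minimal linear projective resolution of $\lsimple$ of length $\sshift$ (in the sense of \fullref{lemma:koszul-multiplicities}, i.e.\ its first $\sshift{+}1$ terms), whose projective in each level has multiplicity vector given by the Chebyshev recursion \eqref{eq:cp-recursion} started at $\underline{a}=\mathrm{e}_{\ii}$, $\underline{b}=0$ — this being the radical-filtration data of $\algstuff{K}_{0}=\lsimple$. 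Granting this for every $\sshift$, and using that \fullref{algorithm:koszul} does not terminate by hypothesis, the truncations $\mathrm{Reso}(\Rg_{\sshift})$ stabilize on each fixed level and glue to an infinite minimal linear projective resolution of $\lsimple$ with the prescribed multiplicities, which is the claim.

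For the base case $\sshift=0$, the level $0$ resolution graph has $V_{0}=\{v(\ii)\}$ and $E_{0}=\emptyset$, so $\mathrm{Reso}(\Rg_{0})$ is just $\lproj\twoheadrightarrow\lsimple$, which is the projective cover and hence a minimal projective presentation of length $0$; its multiplicity vector is $\mathrm{e}_{\ii}=U_{0}(\amatrix)\mathrm{e}_{\ii}$, as required by \eqref{eq:cp-recursion}. For the inductive step, assume $\mathrm{Reso}(\Rg_{\sshift})$ has the stated property. Since the algorithm does not terminate, \fullref{lemma:koszul-terminate} says that $\Gg$ is not a type $\typeADE$ graph; as recorded in the proof of \fullref{proposition:koszul-abstract} (the roots of $U_{\sshift}(\xpar)$ lie in $]-2,2[$, whereas the Perron--Frobenius eigenvalue of $\Gg$ is $\geq 2$) this forces $U_{\sshift}(\amatrix)\neq 0$. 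Thus the hypothesis of \fullref{lemma:koszul-multiplicities} is satisfied, and we conclude that $\mathrm{Reso}(\Rg_{\sshift{+}1})$ is again a minimal linear projective resolution of length $\sshift{+}1$. That its new level carries the multiplicities dictated by \eqref{eq:cp-recursion} follows from the abstract discussion preceding that display: the next projective is the projective cover of the kernel $\algstuff{K}_{\sshift{+}1}$, whose head has multiplicity vector $\amatrix\underline{a}_{\sshift}-\underline{b}_{\sshift}$, i.e.\ exactly the $(\sshift{+}1)$st output of the Chebyshev matrix recursion on $(\underline{a},\underline{b})$. Finally, since two runs of the algorithm differ only in the choices of bases of the $\K$-weighting spaces (\fullref{definition:koszul-weighting}), \fullref{lemma:koszul-multiplicities} also shows the level sets — hence the isomorphism type of $\mathrm{Reso}(\Rg_{\sshift})$ — do not depend on these choices.

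The genuinely substantive work is not in this argument but already inside \fullref{lemma:koszul-multiplicities}: one must check that the fork moves and singleton moves of \fullref{algorithm:koszul} produce precisely a basis of the degree $1$ part $\algstuff{K}_{\sshift}^{1}$ of the kernel. This rests on the facts that $\algstuff{K}_{\sshift}$ is generated in degree $1$ (so $\algstuff{K}_{\sshift}^{1}$ is semisimple and a basis of it determines both the projective cover and the next differential), that singleton moves account for the generators of $\mathrm{rad}(\algstuff{Q}_{\sshift})$ hitting the simples already present, and that the defining relation of a $\K$-weighting is exactly the condition that two consecutive differentials compose to zero — i.e.\ that $\mathrm{Reso}(\Rg_{\sshift})$ is a complex in the first place. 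Given \fullref{lemma:koszul-multiplicities}, the present proposition is then just the assembly of the induction above, the only extra input being $U_{\sshift}(\amatrix)\neq 0$, which, as noted, is immediate from the non-termination hypothesis.
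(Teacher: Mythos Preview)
Your proposal is correct and is essentially the same argument as the paper's one-line proof (``a direct consequence of \fullref{lemma:koszul-multiplicities} and \ref{lemma:koszul-terminate}''), just unpacked: you make the induction on $\sshift$ explicit, spell out the base case, and record that non-termination gives $U_{\sshift}(\amatrix)\neq 0$ via \fullref{lemma:koszul-terminate} so that \fullref{lemma:koszul-multiplicities} applies at each step. The extra paragraph on what happens inside \fullref{lemma:koszul-multiplicities} is helpful commentary but not part of the proof of this proposition.
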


\makeautorefname{lemma}{Lemmas}

\begin{proof}
This is a direct consequence of 
\fullref{lemma:koszul-multiplicities} and \ref{lemma:koszul-terminate}.
\end{proof}

\makeautorefname{lemma}{Lemma}

\subsubsection{Elimination}\label{subsubsec:koszul-elim}

Here is the numerical condition which we are going to use.

\begin{lemma}\label{lemma:the-cartan-koszul}
If $\zigzag$ is Koszul, then 
its graded Cartan matrix $\cmatrix_{\qpar}$ is invertible 
in the ring of matrices with entries from $\Z[[\qpar]]$.
Moreover, the column sums of $\cmatrix_{\qpar}^{-1}$ are power series 
in $\Z[[\qpar]]$ of the from
\[
{\textstyle\sum_{\sshift=0}^{\infty}}\,(-1)^{\sshift}a_{\sshift}\qpar^{\sshift}
\]
with coefficients $a_{\sshift}\in\Z_{\geq 1}$.
\end{lemma}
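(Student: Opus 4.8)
The plan is to read $\cmatrix_{\qpar}^{-1}$ off the minimal linear projective resolutions of the simple modules that Koszulity supplies, and to use the Frobenius structure of $\zigzag$ to see that these resolutions never terminate. I would first dispose of invertibility, which in fact needs nothing: by \fullref{proposition:cartan-zigzag} we have $\cmatrix_{\qpar}=\qnumber{2}\imatrix+\qpar\amatrix$, so $\cmatrix_{\qpar}\equiv\imatrix\pmod{\qpar}$, and any square matrix over $\Z[[\qpar]]$ that is invertible modulo $\qpar$ is invertible over $\Z[[\qpar]]$ (factor out the constant term and expand the resulting geometric series). The substance of the statement is therefore the description of the column sums of the inverse.

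For that, fix a vertex $\ii$ and, using Koszulity, take the minimal linear projective resolution $\cdots\to\qpar^{\sshift}\algstuff{Q}_{\sshift}\to\cdots\to\qpar\algstuff{Q}_{1}\to\algstuff{Q}_{0}\twoheadrightarrow\lsimple$, with $\algstuff{Q}_{\sshift}=\bigoplus_{j}\lproj[j]^{\oplus m_{\sshift,j}}$ and $m_{\sshift,j}\in\N$. By the Loewy picture \eqref{eq:loewy} each $\lproj[j]$ is concentrated in degrees $0,1,2$, so $\qpar^{\sshift}\algstuff{Q}_{\sshift}$ is concentrated in degrees $\geq\sshift$; truncating the resolution at level $\sshift$ and passing to the limit, the alternating sum of graded composition multiplicities converges and gives, in the graded Grothendieck group, $[\lsimple]=\sum_{\sshift\geq 0}(-1)^{\sshift}\qpar^{\sshift}\sum_{j}m_{\sshift,j}\,[\lproj[j]]$. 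Since by \fullref{proposition:cartan-zigzag} the $j$-th column of $\cmatrix_{\qpar}$ represents $[\lproj[j]]$ and $[\lsimple]$ is the $\ii$-th coordinate vector, this exhibits the $\ii$-th column of $\cmatrix_{\qpar}^{-1}$ as the vector whose $j$-th entry is $\sum_{\sshift\geq 0}(-1)^{\sshift}m_{\sshift,j}\qpar^{\sshift}$; summing over $j$, its $\ii$-th column sum equals $\sum_{\sshift\geq 0}(-1)^{\sshift}a_{\sshift}\qpar^{\sshift}$ with $a_{\sshift}=\sum_{j}m_{\sshift,j}$ the number of indecomposable summands of $\algstuff{Q}_{\sshift}$, and $\ii$ was arbitrary.

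It remains to see $a_{\sshift}\in\Z_{\geq 1}$ for all $\sshift$, i.e. that the resolution never stops; this is the only step needing an idea rather than degree bookkeeping, and I expect it to be the crux. Here I would invoke \fullref{lemma:basics2}: $\zigzag$ is Frobenius, hence self-injective, and it is visibly not semisimple (its indecomposable projectives are not simple, cf. \eqref{eq:loewy}). For a non-semisimple self-injective algebra every simple module has infinite projective dimension — a simple of finite projective dimension would be projective, hence injective, forcing semisimplicity — so $\algstuff{Q}_{\sshift}\neq 0$, whence $a_{\sshift}\geq 1$, for every $\sshift$. The alternating sign $(-1)^{\sshift}$ is built into the shape of a linear resolution, and the self-injectivity input is exactly what yields the strict positivity of the $a_{\sshift}$; the remaining care goes into justifying the convergence of the infinite alternating sum, which is standard since $\qpar^{\sshift}\algstuff{Q}_{\sshift}$ lives in degrees $\geq\sshift$.
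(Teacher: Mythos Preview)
Your argument is correct and follows the same underlying idea as the paper --- the inverse of the graded Cartan matrix encodes the graded multiplicities in the minimal linear projective resolutions of the simples --- but you are considerably more explicit. The paper simply cites \cite[Theorem~2.11.1]{bgs-koszul} for invertibility and then says the inverse ``encodes the graded multiplicities of the resolution of the simples by projectives,'' without spelling out why $a_{\sshift}\geq 1$ rather than merely $a_{\sshift}\geq 0$. Your direct invertibility argument from $\cmatrix_{\qpar}\equiv\imatrix\pmod{\qpar}$ is more elementary than invoking BGS, and your use of \fullref{lemma:basics2} (self-injectivity plus non-semisimplicity forces infinite projective dimension of every simple) supplies precisely the strict positivity that the paper leaves to the reader; this positivity is what is actually used in the subsequent gap lemma, so making it explicit is a genuine improvement.
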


\begin{proof}
The statement about the invertibility 
of the graded Cartan matrix is the usual consequence 
of koszulity, cf. \cite[Theorem 2.11.1]{bgs-koszul}. For the 
second statement note that the inverse 
of the graded Cartan matrix encodes the 
graded multiplicities of the resolution of the simples 
by projectives.
\end{proof}

For a formal power series $f=\sum_{i=0}^{\infty}b_{i}\qpar^{i}\in\Z[[\qpar]]$ 
we say that $f$ has \textit{gaps of 
size $k\in\Z_{\geq 0}$} if there exist $i\leq j\in\Z_{\geq 0}$ such that $j-i+1=k$ and 
$b_{i}=b_{i+1}=\cdots=b_{j-1}=b_{j}=0$.

Recall that $n$ denotes the number of vertices of $\Gg$.

\begin{lemma}\label{lemma:the-cartan-koszul-b}
If $\zigzag$ is Koszul, 
then $(\mathrm{det}(\cmatrix_{\qpar}))^{-1}$ does not have 
gaps of size $>2n-2$.
\end{lemma}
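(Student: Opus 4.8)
The plan is to derive the claim from \fullref{lemma:the-cartan-koszul} via the adjugate formula for $\cmatrix_{\qpar}^{-1}$ together with a crude degree count. First I would record the shape of the Cartan matrix: by \fullref{proposition:cartan-zigzag} we have $\cmatrix_{\qpar}=\qnumber{2}\imatrix+\qpar\amatrix$ with $\qnumber{2}=1+\qpar^{2}$, so every entry of $\cmatrix_{\qpar}$ is a polynomial in $\qpar$ of degree at most $2$ (diagonal entries $1+\qpar^{2}$, off-diagonal entries $0$ or $\qpar$), and $\cmatrix_{\qpar}\equiv\imatrix\pmod{\qpar}$. Hence $\mathrm{det}(\cmatrix_{\qpar})\in\Z[\qpar]$ has constant term $1$, so it is a unit in $\Z[[\qpar]]$, and the identity $\cmatrix_{\qpar}\cdot\mathrm{adj}(\cmatrix_{\qpar})=\mathrm{det}(\cmatrix_{\qpar})\imatrix$ gives $\cmatrix_{\qpar}^{-1}=\mathrm{det}(\cmatrix_{\qpar})^{-1}\,\mathrm{adj}(\cmatrix_{\qpar})$ over $\Z[[\qpar]]$. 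Each entry of $\mathrm{adj}(\cmatrix_{\qpar})$ is, up to sign, an $(n-1)\times(n-1)$ minor of $\cmatrix_{\qpar}$, i.e. a sum of products of $n-1$ entries of $\cmatrix_{\qpar}$, hence a polynomial of degree at most $2(n-1)=2n-2$; in particular so is any column sum of $\mathrm{adj}(\cmatrix_{\qpar})$.

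Now fix one column, say the first, let $p(\qpar)$ be the first column sum of $\mathrm{adj}(\cmatrix_{\qpar})$ (so $\deg p\leq 2n-2$), and write $F(\qpar)=(\mathrm{det}(\cmatrix_{\qpar}))^{-1}=\sum_{\sshift\geq 0}c_{\sshift}\qpar^{\sshift}\in\Z[[\qpar]]$. The first column sum of $\cmatrix_{\qpar}^{-1}$ is $F(\qpar)p(\qpar)$, and by \fullref{lemma:the-cartan-koszul} this equals $v(\qpar)=\sum_{\sshift=0}^{\infty}(-1)^{\sshift}a_{\sshift}\qpar^{\sshift}$ with every $a_{\sshift}\in\Z_{\geq 1}$; in particular $[\qpar^{N}]v(\qpar)\neq 0$ for all $N\in\Z_{\geq 0}$. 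Suppose, for contradiction, that $F$ has a gap of size $>2n-2$, so $c_{i}=c_{i+1}=\cdots=c_{i+2n-2}=0$ for some $i\in\Z_{\geq 0}$. Setting $N=i+2n-2$ and using $\deg p\leq 2n-2$, the Cauchy product gives
\[
0\neq[\qpar^{N}]v(\qpar)=[\qpar^{N}]\bigl(F(\qpar)p(\qpar)\bigr)=\sum_{k=0}^{2n-2}\bigl([\qpar^{k}]p(\qpar)\bigr)\,c_{N-k},
\]
but every index $N-k$ occurring on the right lies in $\{i,i+1,\dots,i+2n-2\}$, so the sum vanishes --- a contradiction. Hence $(\mathrm{det}(\cmatrix_{\qpar}))^{-1}$ has no gap of size $>2n-2$.

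The only step that needs genuine care is the degree bound on the adjugate: one must observe that since all entries of $\cmatrix_{\qpar}$ have degree at most $2$, an $(n-1)\times(n-1)$ minor has degree at most $2n-2$, and that this length $2n-2$ is precisely what makes the product $F(\qpar)p(\qpar)=v(\qpar)$ force a nonzero coefficient of $F$ somewhere inside every block of $2n-1$ consecutive coefficients. Everything else is routine bookkeeping, and no input on $\zigzag$ beyond \fullref{proposition:cartan-zigzag} and \fullref{lemma:the-cartan-koszul} is used.
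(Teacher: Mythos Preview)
Your proof is correct and follows essentially the same route as the paper's: both use the adjugate/cofactor identity $\cmatrix_{\qpar}\cdot\mathrm{adj}(\cmatrix_{\qpar})=\mathrm{det}(\cmatrix_{\qpar})\imatrix$, bound the degree of the cofactor entries (and hence of any column sum) by $2n-2$ via \fullref{proposition:cartan-zigzag}, and then read off from the Cauchy product and \fullref{lemma:the-cartan-koszul} that a block of $2n-1$ consecutive zero coefficients in $(\mathrm{det}(\cmatrix_{\qpar}))^{-1}$ would force a zero coefficient in the column sum of $\cmatrix_{\qpar}^{-1}$. Your additional remark that $\cmatrix_{\qpar}\equiv\imatrix\pmod{\qpar}$, whence $\mathrm{det}(\cmatrix_{\qpar})$ is a unit in $\Z[[\qpar]]$, makes explicit a point the paper leaves implicit.
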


\begin{proof}
Recall that the matrix of cofactors $A^{\ast}$ of a fixed 
matrix $A$ is determined by $AA^{\ast}=\mathrm{det}(A)\imatrix$. By construction, 
the matrix $\cmatrix_{\qpar}^{\ast}$ has its entries in $\Z[\qpar]$. 
In fact, the entries of $\cmatrix_{\qpar}^{\ast}$ are 
polynomials in $\Z[\qpar]$ of degree 
at most $2n-2$ (this follows from \fullref{proposition:cartan-zigzag}), and the 
statement from \fullref{lemma:the-cartan-koszul} boils down to 
the graded Cartan determinant being invertible with the claimed property. To be precise,
if the $j^{\mathrm{th}}$ column sum of $\cmatrix_{\qpar}^{\ast}$ 
is of the form $\sum_{\sshift=0}^{2n-2}b_{\sshift}\qpar^{\sshift}$, 
and the inverse of the graded 
Cartan determinant is $\sum_{\sshift=0}^{\infty}b_{\sshift}^{\prime}\qpar^{\sshift}$, then
$({\textstyle\sum_{\sshift=0}^{2n-2}}\,b_{\sshift}\qpar^{\sshift})
({\textstyle\sum_{\sshift=0}^{\infty}}\,b_{\sshift}^{\prime}\qpar^{\sshift})
=
{\textstyle\sum_{\sshift=0}^{\infty}}\,(-1)^{\sshift}a_{\sshift}\qpar^{\sshift}$
implies $(-1)^{k}a_{k}=b_{0}b_{k}^{\prime}+\cdots+b_{2n-2}b_{k-2n+2}^{\prime}$. Thus, 
$a_{j}=0$ for some $j\in\Z_{\geq 0}$ if there 
would be a gap of size $>2n-2$.
\end{proof}

\begin{proposition}\label{proposition:finite-ADE-koszul}
If $\Gg$ is a type $\typeADE$ graph, then $\zigzag$ is not Koszul.
\end{proposition}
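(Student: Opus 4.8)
The plan is to deduce non-Koszulity from the numerical obstruction in \fullref{lemma:the-cartan-koszul}, made effective by an explicit closed form for the inverse graded Cartan matrix. By \fullref{proposition:cartan-zigzag} we have $\cmatrix_{\qpar}=(1+\qpar^{2})\imatrix+\qpar\amatrix$, and since this reduces to $\imatrix$ at $\qpar=0$ it is invertible in the matrix ring over $\Z[[\qpar]]$. The first step is to check, directly from the recursion \eqref{eq:cp-recursion-formula}, the identity
\[
\cmatrix_{\qpar}^{-1}={\textstyle\sum_{\sshift\geq 0}}\,(-1)^{\sshift}U_{\sshift}(\amatrix)\qpar^{\sshift},
\]
which is nothing but the generating function $\sum_{\sshift}U_{\sshift}(\xpar)\qpar^{\sshift}=(1-\xpar\qpar+\qpar^{2})^{-1}$ for the Chebyshev polynomials, evaluated at $\xpar=\amatrix$ and with $\qpar$ replaced by $-\qpar$; it also records the same combinatorics as the syzygy recursion \eqref{eq:cp-recursion}.

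Granting this, the $\ii$-th column sum of $\cmatrix_{\qpar}^{-1}$ equals $\sum_{\sshift\geq 0}(-1)^{\sshift}\big(\sum_{j}(U_{\sshift}(\amatrix))_{j\ii}\big)\qpar^{\sshift}$. If $\zigzag$ were Koszul, \fullref{lemma:the-cartan-koszul} would force this to be of the form $\sum_{\sshift}(-1)^{\sshift}a_{\sshift}\qpar^{\sshift}$ with every $a_{\sshift}\in\Z_{\geq 1}$, so in particular $\sum_{j}(U_{\sshift}(\amatrix))_{j\ii}\geq 1$ for all $\sshift$. But if $\Gg$ is of type $\typeADE$ with Coxeter number $h$, then $U_{h-1}(\amatrix)=0$: the roots of $U_{h-1}$ are the numbers $2\cos(k\pi/h)$ with $1\leq k\leq h-1$, and these include all eigenvalues $2\cos(m_{i}\pi/h)$ of $\amatrix$ — this is precisely the vanishing already exploited in the proof of \fullref{lemma:koszul-terminate}. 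Hence the $(h-1)$-st coefficient of every column sum of $\cmatrix_{\qpar}^{-1}$ is $0$, contradicting $a_{h-1}\geq 1$. This handles all finite $\typeADE$ graphs uniformly, including the degenerate graphs $\typeA[1]$ and $\typeA[2]$ (for which, alternatively, $\zigzag$ is not even quadratic, cf.\ \fullref{lemma:basics1} and \fullref{definition:zigzag}).

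I do not expect a genuine obstacle: the argument is immediate once the closed form for $\cmatrix_{\qpar}^{-1}$ is in place. The only thing to be careful about is that one really wants the full strength of \fullref{lemma:the-cartan-koszul} (positivity of all column-sum coefficients) rather than \fullref{lemma:the-cartan-koszul-b}: the latter, applied to the graded Cartan determinants in \eqref{eq:the-data}, settles $\typeA[n]$ for $n\geq 2$ (where $\mathrm{det}(\cmatrix_{\qpar})^{-1}=(1-\qpar^{2})/(1-\qpar^{2n+2})$ has a gap of size $2n-1>2n-2$), $\typeD[n]$ for $n$ odd, and all three exceptional types, but it fails for $\typeD[n]$ with $n$ even, where $\mathrm{det}(\cmatrix_{\qpar})=(1+\qpar^{2})(1+\qpar^{2n-2})$ and $\mathrm{det}(\cmatrix_{\qpar})^{-1}$ has no large gaps. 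One could also argue homologically: since $\zigzag$ is self-injective by \fullref{lemma:basics2}, the simple $\lsimple$ has infinite projective dimension, yet by \fullref{lemma:koszul-multiplicities} its minimal resolution is linear through homological degree $h-2$ with $\sshift$-th term $\bigoplus_{j}\lproj[j]^{(U_{\sshift}(\amatrix))_{j\ii}}$, so the $(h-1)$-st syzygy is nonzero but, since $U_{h-1}(\amatrix)=0$, carries no generator in degree $h-1$, contradicting linearity — but since the Cartan computation covers everything at once I would run with it.
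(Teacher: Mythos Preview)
Your argument is correct and cleaner than the paper's own proof. The key identity $\cmatrix_{\qpar}^{-1}=\sum_{\sshift\geq 0}(-1)^{\sshift}U_{\sshift}(\amatrix)\qpar^{\sshift}$, obtained directly from the Chebyshev generating function, lets you apply \fullref{lemma:the-cartan-koszul} in one stroke: the vanishing $U_{h-1}(\amatrix)=0$ for the Coxeter number $h$ (already established in the paper for \fullref{lemma:koszul-terminate}) kills the $(h{-}1)$-st coefficient of every column sum, violating the required positivity $a_{\sshift}\in\Z_{\geq 1}$ uniformly for all $\typeADE$ types.

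The paper takes a different route, working instead through \fullref{lemma:the-cartan-koszul-b} and the graded Cartan \emph{determinant}. As you observe, this requires computing $\mathrm{det}(\cmatrix_{\qpar})^{-1}$ type by type from the data in \eqref{eq:the-data} and checking for large gaps; it succeeds immediately for $\typeA[n]$, $\typeD[n]$ with $n$ odd, and the exceptional types, but fails outright for $\typeD[n]$ with $n$ even, where the gap is only of size $1$. To rescue that case the paper proves an auxiliary claim that every $(n{-}1)$-minor of $\cmatrix_{\qpar}(\zigzag(\typeD[2m]))$ is divisible by $\qnumber{2}$, allowing a cancellation that restores a usable gap. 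Your approach bypasses all of this: by working with the full inverse matrix rather than its determinant you avoid the case split and the minors computation entirely. The homological variant you sketch at the end is essentially the same observation phrased module-theoretically, and would also work.
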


\begin{proof}
We want to use \fullref{lemma:the-cartan-koszul-b} to show that 
$\zigzag$ is not Koszul. To this end, observe that 
we already know from \eqref{eq:the-data}
the corresponding graded Cartan determinants. Their formal inverses are not hard to 
compute:
\begin{gather*}
\begin{aligned}
\typeA\colon& (1-\qpar^{2}){\textstyle\sum_{\sshift=0}^{\infty}}\,\qpar^{(2n+2)\sshift},
\quad&&\text{gap}=2n-1,
\\
\typeD,n\text{ even}\colon& (1-\qpar^{2}\pm\cdots+\qpar^{2n-4})
{\textstyle\sum_{\sshift=0}^{\infty}}\,(-1)^{\sshift}(\sshift+1)\qpar^{(2n-2)\sshift},\quad&&\text{gap}=1,
\\
\typeD,n\text{ odd}\colon& (1-\qpar^{2}\pm\cdots-\qpar^{2n-4})
{\textstyle\sum_{\sshift=0}^{\infty}}\qpar^{(4n-4)\sshift},\quad&&\text{gap}=2n-1,
\\
\typeE[6]\colon& (1-\qpar^{2}+\qpar^{4}-\qpar^{8}+\qpar^{10}-\qpar^{12})
{\textstyle\sum_{\sshift=0}^{\infty}}\,\qpar^{24\sshift},\quad&&\text{gap}=11,
\\
\typeE[7]\colon& (1-\qpar^{2}+\qpar^{4}){\textstyle\sum_{\sshift=0}^{\infty}}\,(-1)^{\sshift}\qpar^{18\sshift},
\quad&&\text{gap}=13,
\\
\typeE[8]\colon& (1-\qpar^{2}+\qpar^{4}+\qpar^{10}-\qpar^{12}+\qpar^{14})
{\textstyle\sum_{\sshift=0}^{\infty}}\,(-1)^{\sshift}\qpar^{30\sshift},\quad&&\text{gap}=15.
\end{aligned}
\end{gather*}
Hence, except for the type $\typeD[n]$ graph with $n=2m$, the gap equals $2n-1$.  
To rule out the remaining cases, 
first recall the various graded Cartan determinants, cf. in the proof of \fullref{lemma:bipartite}, and
we make the following claim.
\medskip

\textit{Claim.} The graded determinant of 
every $n-1$ minor of $\cmatrix_{\qpar}(\zigzag(\typeD[2m]))$ is divisible by $\qnumber{2}$.
\medskip
 
Before we prove this claim, let us state the consequences. This means that we can simplify
\[
\cmatrix_{\qpar}\cmatrix_{\qpar}^{\ast}=(1+\qpar^{2n{-}2})\qnumber{2}\imatrix
\quad\rightsquigarrow\quad
\cmatrix_{\qpar}\ccmatrix_{\qpar}^{\ast}=(1+\qpar^{2n{-}2})\imatrix,
\]
where $\ccmatrix_{\qpar}^{\ast}=(\qnumber{2})^{-1}\cmatrix_{\qpar}^{\ast}$ 
is a matrix with entries in $\Z[\qpar]$ 
of degree at most $2n-4$. But now the argument using gaps applies 
again since $(1+\qpar^{2n{-}2})^{-1}=\sum_{\sshift=0}^{\infty}(-1)^{\sshift}\qpar^{(2n-2)\sshift}$ 
(the gap is of size $2n-3$, which is strictly greater than $2n-4$). Hence,
this case is ruled out as well, and it remains to prove the claim.
\medskip

\textit{Proof of the claim.} We prove the statement 
by induction on the number of vertices. For the type $\typeD[4]$ graph the 
matrix of cofactors is easy to compute, i.e. we have
\[
\cmatrix_{\qpar}(\zigzag(\typeD[4]))^{\ast}
=
\qnumber{2}
\scalebox{.8}{$\displaystyle
\left(
\begin{array}{cccc}
1+\qpar^{4} & -\qpar\qnumber{2} & \qpar^{2} & \qpar^{2}
\\
-\qpar\qnumber{2} & (\qnumber{2})^{2} & -\qpar\qnumber{2} & -\qpar\qnumber{2}
\\
\qpar^{2} & -\qpar\qnumber{2} & 1+\qpar^{4} & \qpar^{2}
\\
\qpar^{2} & -\qpar\qnumber{2} & \qpar^{2} & 1+\qpar^{4}
\end{array}
\right)
$}.
\]
So assume that $n=2m>4$, and take a $i$-$j$ minor of $\cmatrix_{\qpar}(\zigzag(\typeD[n]))$, obtained by
erasing the $i^{\mathrm{th}}$ row and the $j^{\mathrm{th}}$ column. 
By symmetry, it suffices to consider the case $i\leq j$. As long as $i\leq n-2$ 
it turns out the determinant of this minor is equal to the 
determinant of $\cmatrix_{\qpar}(\zigzag(\typeA[i{-}1]))$ times the determinant of a $n-i$ minor 
of $\cmatrix_{\qpar}(\zigzag(\typeD[n{-}i{+}1]))$. But either $\mathrm{det}(\cmatrix_{\qpar}(\zigzag(\typeA[i{-}1])))=\qnumber{i}$ is divisible 
by $\qnumber{2}$, in case $i$ is even, or we know by induction that $\mathrm{det}(\cmatrix_{\qpar}(\zigzag(\typeD[n{-}i{+}1])))$, 
is in case $i$ is odd. Thus, three cases remain, i.e. $i=j=n-1$, $i=j=n$ and $i=n-1$ and $j=n$. 
The first two cases are easy since the determinant of the minor is equal to 
$\mathrm{det}(\cmatrix_{\qpar}(\zigzag(\typeA[n{-}1])))=\qnumber{n}$. For the remaining case 
one first expands the minor along the last column, followed by an expansion in the last row 
to determine that this minor is equal to 
$\pm\mathrm{det}(\cmatrix_{\qpar}(\zigzag(\typeA[n{-}3])))=\qnumber{(n{-}2)}$.
\end{proof}

\subsubsection{The proof}\label{subsubsec:koszul-proof}

We collect the all statements from above.

\begin{proof}[Proof of \fullref{theorem:koszul}]
\fullref{proposition:finite-ADE-koszul} shows that 
$\zigzag$ is not Koszul for $\Gg$ being a type $\typeADE$ graph, 
while \fullref{proposition:koszul-the-reso} constructs the linear 
projective resolution in all other cases, where we use the fact that 
having a linear projective resolution of length $\sshift$ for 
all $\sshift\in\Z_{\geq 0}$ implies koszulity. 
\end{proof}

\subsection{The case of \texorpdfstring{$\bzigzag$}{bZ}}\label{subsec:koszul-bzigzag}

\subsubsection{Construction -- Part I}\label{subsubsec:koszul-construction-b}

We first observe again how the successive kernels 
in a resolution of a $\bzigzag$-module with a two step 
radical filtration changes. 

For this purpose, we use the same notation and argumentation as above.
In particular, we get a short exact sequence 
$\algstuff{K}_{1}^{\bvec}\hookrightarrow\algstuff{Q}
(\algstuff{K}_{0}^{\bvec})\twoheadrightarrow\algstuff{K}_{0}^{\bvec}$. 
The crucial difference is that the 
multiplicity vectors of the 
radical filtration of $\algstuff{K}_{1}^{\bvec}$ are now 
$\underline{a}_{1}=\amatrix\underline{a}_{0}-\underline{b}_{0}$ 
and $\underline{b}_{1}=(\imatrix-\ematrix_{\bvec})\underline{a}_{0}$, 
where $\ematrix_{\bvec}$ as before denotes the diagonal matrix 
with only non-zero entries equal to $1$ in the $\ii[c]$-$\ii[c]$ position. 
Hence, as before, we can produce $\bzigzag$-modules 
$\algstuff{K}_{t}^{\bvec}$ for all $t\in\Z_{\geq 0}$ as long as $\underline{a}_{t}\neq 0$ which are generated in degree $t$ and have radical and grading filtrations that agree.

Note that, we do not have a pure polynomial recursion 
due to the occurrence of the matrix $\ematrix_{\bvec}$, 
and we have to modify our arguments.
To this end, we introduce a \textit{recursion of Chebyshev polynomials 
with matrix coefficients}, namely
\begin{gather}\label{eq:cpb-recursion-formula}
U^{\bvec}_{-1}(\xmpar)=0,
\;\;
U^{\bvec}_{0}(\xmpar)=\imatrix,
\;\;
U^{\bvec}_{t}(\xmpar)=\xmpar U^{\bvec}_{t{-}1}(\xmpar)
-(\imatrix-\ematrix_{\bvec})U^{\bvec}_{t{-}2}(\xmpar),
\text{ for }t\in\Z_{\geq 1}.
\end{gather}

\begin{remark}\label{remark:two-variable-CP}
One could state $U^{\bvec}_{t}(\xmpar)$ in terms of a polynomial
in two non-commuting variables $\xmpar,\ympar$. But we only need 
the version with $\ympar=(\imatrix-\ematrix_{\bvec})$, so we stick 
with it.
\end{remark}

Having these, observe that for $t\in\Z_{\geq 0}$, the $t^{\mathrm{th}}$ multiplicity vectors of 
the radical filtration of $\algstuff{K}_{t}^{\bvec}$ are then given by
\begin{gather}\label{eq:matrixcp-recursion}
\begin{pmatrix}
\underline{a}_{t}
\\
\underline{b}_{t}
\end{pmatrix}
=
\begin{pmatrix}
U^{\bvec}_{t}(\amatrix) 
& -U^{\bvec}_{t{-}1}(\amatrix)
\\
(\imatrix-\ematrix_{\bvec})U^{\bvec}_{t{-}1}(\amatrix) 
& -(\imatrix-\ematrix_{\bvec})U^{\bvec}_{t{-}2}(\amatrix)
\end{pmatrix}
\begin{pmatrix}
\underline{a}_0
\\
\underline{b}_0
\end{pmatrix}.
\end{gather}

We now express the $U^{\bvec}_{t}(\xmpar)$ in term of 
the usual 
Chebyshev polynomials 
$U_{t}(\xmpar)$ viewed as polynomials with matrix coefficients. 
To state it, we need the set of length $k$ compositions  
$C(t,k)=\{(i_{1},\dots,i_{k})\mid i_{j} 
\in\Z_{\geq 0},i_{1}+\dots+i_{k}=t+2-2k\}$ 
for $k,t\in\Z_{\geq 0}$.

\begin{lemma}\label{lemma:relation-polynomials}
For $t\in\Z_{\geq 0}$ it holds
\begin{gather}\label{eq:equality-polynomials}
U^{\bvec}_{t}(\xmpar)={\textstyle\sum_{k=1}^{r(t)}}\, 
{\textstyle\sum_{\underline{i}\in C(t,k)}}\,U_{i_{1}}(\xmpar) 
\ematrix_{\bvec}U_{i_{2}}(\xmpar)\ematrix_{\bvec} 
\dots \ematrix_{\bvec} U_{i_{k}}(\xmpar),
\end{gather}
where $r(t)=\lfloor t/2\rfloor +1$.
\end{lemma}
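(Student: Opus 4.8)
The plan is to prove \eqref{eq:equality-polynomials} by passing to generating functions and relating the two recursions through a single geometric series. Introduce
\[
f(z)={\textstyle\sum_{t\geq 0}}\,U_{t}(\xmpar)z^{t},
\qquad
g(z)={\textstyle\sum_{t\geq 0}}\,U^{\bvec}_{t}(\xmpar)z^{t},
\]
viewed as power series in a central formal variable $z$ with coefficients in the ring generated by $\xmpar$ and $\ematrix_{\bvec}$. Multiplying \eqref{eq:cp-recursion-formula} by $z^{t}$ and summing over $t\geq 1$ (using $U_{-1}=0$) gives $(\imatrix-\xmpar z+z^{2})f=\imatrix$; since $\xmpar$ commutes with $z$ and $\imatrix$, this says $f$ is the two-sided inverse of $f^{-1}:=\imatrix-\xmpar z+z^{2}$. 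The identical manipulation of \eqref{eq:cpb-recursion-formula} yields $(\imatrix-\xmpar z+(\imatrix-\ematrix_{\bvec})z^{2})g=\imatrix$, i.e.\ $g^{-1}=\imatrix-\xmpar z+(\imatrix-\ematrix_{\bvec})z^{2}=f^{-1}-\ematrix_{\bvec}z^{2}$; here the matrix coefficients multiply on the left, which is harmless since every series involved has constant term $\imatrix$, so one-sided and two-sided inverses coincide.

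Next I would factor $g^{-1}=f^{-1}-\ematrix_{\bvec}z^{2}=f^{-1}\bigl(\imatrix-f\ematrix_{\bvec}z^{2}\bigr)$ and invert, obtaining
\[
g=\bigl(\imatrix-f\ematrix_{\bvec}z^{2}\bigr)^{-1}f={\textstyle\sum_{k\geq 0}}\,z^{2k}(f\ematrix_{\bvec})^{k}f,
\]
the geometric expansion being legitimate because $f\ematrix_{\bvec}z^{2}$ has vanishing constant term. Substituting $f=\sum_{j\geq 0}U_{j}(\xmpar)z^{j}$ into $(f\ematrix_{\bvec})^{k}f$ and collecting powers of $z$, the coefficient of $z^{t}$ in $g$ becomes a sum over $k\geq 0$ and tuples $(j_{1},\dots,j_{k+1})$ of non-negative integers with $2k+j_{1}+\dots+j_{k+1}=t$ of the monomials $U_{j_{1}}(\xmpar)\ematrix_{\bvec}U_{j_{2}}(\xmpar)\ematrix_{\bvec}\cdots\ematrix_{\bvec}U_{j_{k+1}}(\xmpar)$. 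Writing $\ell=k+1$, the constraint reads $j_{1}+\dots+j_{\ell}=t+2-2\ell$, that is $(j_{1},\dots,j_{\ell})\in C(t,\ell)$, and non-emptiness forces $1\leq\ell\leq\lfloor t/2\rfloor+1=r(t)$. Comparing with the definition of $g$ this is precisely the asserted formula \eqref{eq:equality-polynomials}.

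The computation is essentially mechanical, so I do not expect a real obstacle; the one point needing care is the non-commutativity of $\xmpar$ and $\ematrix_{\bvec}$, which means one must track the side on which the recursion coefficients act and invert accordingly — justified throughout by the constant-term remark above. A direct induction on $t$ is an equivalent route: using $\xmpar U_{m}(\xmpar)=U_{m+1}(\xmpar)+U_{m-1}(\xmpar)$ (valid for all $m\geq 0$ with $U_{-1}=0$), one splits the right-hand side $R_{t}$ of \eqref{eq:equality-polynomials} appearing in $\xmpar R_{t-1}$ according to whether the first part of the composition is zero, identifies the two pieces with $R_{t-2}$ and $\ematrix_{\bvec}R_{t-2}$ respectively, and thereby verifies $R_{t}=\xmpar R_{t-1}-(\imatrix-\ematrix_{\bvec})R_{t-2}$ together with $R_{-1}=0$, $R_{0}=\imatrix$; the only slightly delicate step there is the bookkeeping of compositions under the shift of the first part.
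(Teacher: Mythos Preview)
Your generating-function argument is correct and takes a genuinely different route from the paper. The paper proves \eqref{eq:equality-polynomials} by direct induction on $t$: denoting the right-hand side by $W_{t}(\xmpar)$, it verifies the recursion $W_{t}=\xmpar W_{t-1}-(\imatrix-\ematrix_{\bvec})W_{t-2}$ by splitting the summands of $W_{t}$ according to whether the first index $i_{1}$ of the composition satisfies $i_{1}\geq 2$, $i_{1}=1$, or $i_{1}=0$, and matching each piece against terms of $\xmpar W_{t-1}$ or $\ematrix_{\bvec}W_{t-2}$ via the Chebyshev recursion on the leading factor. Your approach instead encodes both recursions as inverses of quadratic polynomials in a central variable $z$, notes that these inverses differ by $\ematrix_{\bvec}z^{2}$, and extracts the formula from the geometric-series factorisation $g=(\imatrix-f\ematrix_{\bvec}z^{2})^{-1}f$. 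This is cleaner and avoids the three-way case analysis; it also makes transparent why the compositions $C(t,k)$ appear, as they are precisely the convolution indices arising from multiplying $k{-}1$ copies of $f\ematrix_{\bvec}$ against $f$. The paper's induction is more elementary in that it stays within polynomial identities, but requires more bookkeeping. Your closing sketch of the inductive alternative is essentially the paper's argument, though the paper's three-way split on $i_{1}$ is slightly finer than the dichotomy you describe.
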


Note that, in case $\ematrix_{\bvec}$ would be the zero 
matrix, \eqref{eq:equality-polynomials} gives 
$U^{\bvec}_{t}(\xmpar)=U_{t}(\xmpar)$.

\begin{proof}
We prove the statement by induction. For $t=0$ and $t=1$ the 
equality holds. (In this case 
$U^{\bvec}_{t}(\xmpar) = U_{t}(\xmpar)$.) To show 
equality for $t\geq 2$, denote by $W_{t}(\xmpar)$ the 
right-hand side of \eqref{eq:equality-polynomials} 
for fixed $t$ and we verify it satisfies the same 
recursion as $U^{\bvec}_{t}(\xmpar)$, i.e. 
\[
W_{t}(\xmpar)=\xmpar W_{t{-}1}(\xmpar)-(\imatrix-\ematrix_{\bvec}) W_{t{-}2}(\xmpar).
\]
Next, let $U_{i_{1}}(\xmpar)\ematrix_{\bvec} 
U_{i_{2}}(\xmpar)\ematrix_{\bvec}\dots\ematrix_{\bvec}U_{i_{k}}(\xmpar)$ 
be a summand of $W_t(\xmpar)$ with ${\underline{i} \in C(t,k)}$ 
for some $1\leq k\leq r(t)$. We distinguish three cases.
\medskip

\noindent\textit{Case $i_{1}\geq 2$.} For this 
we have that $U_{i_{1}-1}(\xmpar)\ematrix_{\bvec} 
U_{i_{2}}(\xmpar)\ematrix_{\bvec}\dots\ematrix_{\bvec}U_{i_{k}}(\xmpar)$ is 
a summand of $W_{t-1}(\xmpar)$ for the sequence of indices 
in $C(t{-}1,k)$ and $U_{i_{1}-2}(\xmpar)\ematrix_{\bvec} 
U_{i_{2}}(\xmpar)\ematrix_{\bvec}\dots\ematrix_{\bvec} 
U_{i_{k}}(\xmpar)$ is a summand of $W_{t{-}2}(\xmpar)$ 
for the sequence of indices in $C(t{-}2,k)$. Hence, we have 
\[
U_{i_{1}}(\xmpar)\ematrix_{\bvec}\dots 
\ematrix_{\bvec} U_{i_{k}}(\xmpar)=\xmpar  
\cdot\left(U_{i_{1}{-}1}(\xmpar)\ematrix_{\bvec} 
\dots\ematrix_{\bvec}U_{i_{k}}(\xmpar)\right) 
-U_{i_{1}{-}2}(\xmpar)\ematrix_{\bvec}\dots\ematrix_{\bvec}U_{i_{k}}(\xmpar)
\]
by the Chebyshev recursion for the first factor.
\medskip

\noindent\textit{Case $i_{1}=1$.} In this 
case we only have that $U_{0}(\xmpar) 
\ematrix_{\bvec}U_{i_{2}}(\xmpar)\ematrix_{\bvec} 
\dots\ematrix_{\bvec}U_{i_{k}}(\xmpar)$ is a summand 
of $W_{t{-}1}(\xmpar)$ for the sequence of indices in 
$C(t{-}1,k)$. We obtain
\[
U_{i_{1}}(\xmpar)\ematrix_{\bvec} \dots \ematrix_{\bvec} 
U_{i_{k}}(\xmpar)=\xmpar\cdot\left(U_{i_{1}{-}1}(\xmpar) 
\ematrix_{\bvec}\dots\ematrix_{\bvec} U_{i_{k}}(\xmpar)\right),
\]
by using that $U_{1}(\xmpar)=\xmpar U_{0}(\xmpar)$.
\medskip

\noindent\textit{Case $i_{1}=0$.} Note that, if we 
omit the first factor in this case, then $U_{i_{2}}(\xmpar) 
\ematrix_{\bvec}\dots\ematrix_{\bvec} U_{i_{k}}(\xmpar)$ 
is a summand of $W_{t{-}2}(\xmpar)$ for the sequence of 
indices in $C(t{-}2,k{-}1)$. Thus,
\[
U_{i_{1}}(\xmpar)\ematrix_{\bvec}\dots 
\ematrix_{\bvec}U_{i_{k}}(\xmpar)=\ematrix_{\bvec} 
\cdot\left(U_{i_{2}}(\xmpar)\ematrix_{\bvec}\dots 
\ematrix_{\bvec}U_{i_{k}}(\xmpar)\right),
\]
since $U_{0}(\xmpar)=\imatrix$.
\medskip

\noindent The sum of the $W_{t}(\xmpar)$ terms 
appearing in the first two cases equals the part 
$\xmpar W_{t{-}1}(\xmpar)-W_{t{-}2}(\xmpar)$ in the recursion, 
while the sum of the terms in the third case equals the 
part $\ematrix_{\bvec}W_{t{-}2}(\xmpar)$. 
Thus, we obtain that 
$W_{t}(\xmpar)$ satisfies \eqref{eq:matrixcp-recursion} 
and is equal to $U_{t}^{\bvec}$.
\end{proof}

Outside of the case of type $\typeADE$ graphs, 
koszulity of $\bzigzag$ is 
obtained very similarly to the koszulity of $\zigzag$ 
with \fullref{proposition:koszul-abstract}, i.e.:

\begin{proposition}\label{proposition:koszul-boundary}
If $\Gg$ is not a type $\typeADE$ graph, then $\bzigzag$ is Koszul.
\end{proposition}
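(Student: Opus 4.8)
The plan is to imitate the proof of \fullref{proposition:koszul-abstract} essentially verbatim, replacing the scalar Chebyshev polynomials by the matrix-coefficient polynomials $U^{\bvec}_{t}$ from \eqref{eq:cpb-recursion-formula}. Run the construction of \fullref{subsubsec:koszul-construction-b} starting from $\algstuff{K}_{0}^{\bvec}=\lsimple$, so that $\underline{a}_{0}$ is the $\ii$-th standard basis vector and $\underline{b}_{0}=0$; by \eqref{eq:matrixcp-recursion} the head multiplicity vector of the $t$-th kernel is then $\underline{a}_{t}=U^{\bvec}_{t}(\amatrix)\underline{a}_{0}$, the $\ii$-th column of $U^{\bvec}_{t}(\amatrix)$. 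Exactly as in the $\zigzag$ case, it suffices to show $\underline{a}_{t}\neq 0$ for all $t\in\Z_{\geq 0}$, i.e. that $U^{\bvec}_{t}(\amatrix)$ has no zero column: granting this, each $\algstuff{K}_{t}^{\bvec}$ is nonzero, generated in degree $t$, and has radical filtration equal to grading filtration, so resolving $\algstuff{K}_{t}^{\bvec}$ by its projective cover $\algstuff{Q}(\algstuff{K}_{t}^{\bvec})$ step by step produces a linear projective resolution of $\lsimple$ (which, since $\Gg$ is not of type $\typeADE$, never terminates). As $\bzigzag$ is finite-dimensional, positively graded with semisimple degree-$0$ part, \fullref{definition:koszul} then yields koszulity.

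The key input is \fullref{lemma:relation-polynomials}, which writes $U^{\bvec}_{t}(\amatrix)$ as a sum over compositions of alternating products $U_{i_{1}}(\amatrix)\ematrix_{\bvec}U_{i_{2}}(\amatrix)\ematrix_{\bvec}\cdots\ematrix_{\bvec}U_{i_{k}}(\amatrix)$. Two observations finish the argument. First, each scalar Chebyshev matrix $U_{i}(\amatrix)$ has entrywise non-negative entries (recalled in the proof of \fullref{proposition:koszul-abstract}, via \cite[End of Section 5.1]{mt-soergel}), and $\ematrix_{\bvec}$ is a non-negative diagonal matrix, so every summand in \eqref{eq:equality-polynomials} is entrywise non-negative, and hence so is $U^{\bvec}_{t}(\amatrix)$; in particular $\underline{a}_{t}$ and $\underline{b}_{t}$ are genuine (non-negative) multiplicity vectors. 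Second, the $k=1$ summand is precisely the ordinary Chebyshev matrix $U_{t}(\amatrix)$, so $U^{\bvec}_{t}(\amatrix)-U_{t}(\amatrix)$ is entrywise non-negative. Since $\Gg$ is not a type $\typeADE$ graph we have $U_{t}(\amatrix)\neq 0$, and by the Perron--Frobenius argument in the proof of \fullref{proposition:koszul-abstract} the symmetric, irreducible, non-negative matrix $U_{t}(\amatrix)$ admits a strictly positive eigenvector and therefore has no zero column. Adding a non-negative matrix cannot create a zero column, so $U^{\bvec}_{t}(\amatrix)$ has no zero column either; hence $\underline{a}_{t}\neq 0$ for every $t$.

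Putting this together, the construction never stalls and produces, for each vertex $\ii$, a minimal linear projective resolution of $\lsimple$ whose $t$-th term is the direct sum of projectives $\lproj[j]$ and $\lprojb[j]$ with the multiplicities prescribed by \eqref{eq:matrixcp-recursion}, proving that $\bzigzag$ is Koszul. I expect the only mildly delicate point to be the bookkeeping behind the first paragraph: transcribing from the proof of \fullref{proposition:koszul-abstract} and from \fullref{subsubsec:koszul-construction-b} the fact that $\underline{a}_{t}\neq 0$ really does force the next kernel $\algstuff{K}_{t+1}^{\bvec}$ to be generated in a single degree, so that the $\bzigzag$-modules $\algstuff{K}_{t}^{\bvec}$ retain the radical-filtration-equals-grading-filtration property all along the resolution. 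Everything else is a direct translation of the scalar case.
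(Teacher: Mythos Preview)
Your proof is correct and follows essentially the same route as the paper: use \fullref{lemma:relation-polynomials} to write $U^{\bvec}_{t}(\amatrix)$ as a sum of non-negative matrices whose $k=1$ term is $U_{t}(\amatrix)$, then invoke the non-$\typeADE$ Perron--Frobenius argument from \fullref{proposition:koszul-abstract} to conclude that $U_{t}(\amatrix)$ (and hence $U^{\bvec}_{t}(\amatrix)$) has no zero column, so the successive kernels stay generated in a single degree. Your write-up is in fact slightly more explicit than the paper's about why the $k=1$ summand dominates, but the argument is the same.
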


Note that, morally speaking, \eqref{eq:matrixcp-recursion} 
and \fullref{proposition:koszul-boundary} imply that 
the number of projective indecomposables in the minimal linear projective resolutions 
grows faster if one increases the number of 
vertices having a vertex 
conditions, and also 
the closer one gets to such vertices, 
cf. \fullref{example-koszul-finiteD}.

\begin{proof}
As argued in the proof of 
\fullref{proposition:koszul-abstract}, since 
$\Gg$ is not a type $\typeADE$ graph, we know 
that $U_{t}(\amatrix)$ will always be a non-negative integral 
matrix without zero columns or rows.
Thus, by \fullref{lemma:relation-polynomials}, 
the same is true for $U_{t}^{\bvec}(\amatrix)$, because all 
summands in \eqref{eq:equality-polynomials} evaluated at 
$\amatrix$ have non-negative entries with the leading 
term $U_{t}(\amatrix)$ also having non-zero columns and rows.
Hence, if we start with $\algstuff{K}_{0}^{\bvec}=\lsimple[\ii]$, then 
we know that $\underline{a}_{t}=U_{t}^{\bvec}(\amatrix)\underline{a}_0$ 
will never be zero, and we are done by the same reasoning as in 
the proof of \fullref{proposition:koszul-abstract}.
\end{proof}

\subsubsection{Construction -- Part II}\label{subsubsec:koszul-ADE-construction-b}

This time there are no cases that need to be eliminated 
(as already indicated in \fullref{example-koszul-finiteD}), 
since all $\Gg$ and all $\bvec$ will give 
Koszul algebras. Before we can show this, we recall that the 
Chebyshev polynomials have a closed form, namely
\begin{gather}\label{eq:cp-formula}
U_{t}(\xpar)={\textstyle\sum_{k=0}^{s(t)}}\,
(-1)^{k}{\textstyle\binom{t-k}{k}}\,\xpar^{t-2k},
\text{ for }t\in\Z_{\geq 0},
\end{gather}
where $s(t)=\lfloor t/2\rfloor$. To get a matrix version of 
\eqref{eq:cp-formula} we let $\xmpar$ and $\ympar$ denote 
two non-commuting variables of degrees $1$ and $2$, respectively.
Let $M(t,k)$ be the set of monomials in $\xmpar$ and $\ympar$ 
of degree $t$ with $k$ different $\ympar$-factors. (For example, $M(t,0)=\{\xmpar^{t}\}$ 
and $M(t,1)=\{\xmpar^{t{-}2}\ympar,\xmpar^{t{-}3}\ympar\xmpar,\dots,\ympar\xmpar^{t{-}2}\}$.)

\begin{lemma}\label{lemma:koszul-boundary-poly}
We have
\begin{gather}\label{eq:cpb-formula}
U_{t}^{\bvec}(\xmpar)={\textstyle\sum_{k=0}^{s(t)}}\,
(-1)^{k}{\textstyle\sum_{m\in M(t,k)}}\,m(\xmpar,\imatrix-\ematrix_{\bvec}),
\text{ for }t\in\Z_{\geq 0},
\end{gather}
where $\xmpar$ is to be assumed to be a matrix.
\end{lemma}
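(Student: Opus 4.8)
The plan is to prove \eqref{eq:cpb-formula} by induction on $t$, working directly from the recursion \eqref{eq:cpb-recursion-formula}. Regard each $M(t,k)$ as a set of (ordered) monomials in two non-commuting variables $\xmpar,\ympar$ of degrees $1$ and $2$, and set $W_{t}={\textstyle\sum_{k=0}^{s(t)}}(-1)^{k}{\textstyle\sum_{m\in M(t,k)}}m$, a non-commutative polynomial in $\xmpar$ and $\ympar$; thus \eqref{eq:cpb-formula} asserts that $U^{\bvec}_{t}(\xmpar)$ equals $W_{t}$ after the specialisation $\ympar\mapsto\imatrix-\ematrix_{\bvec}$ (with $\xmpar$ already a matrix). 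Since the recursion \eqref{eq:cpb-recursion-formula} together with its initial values determines the sequence $(U^{\bvec}_{t})_{t\geq 0}$ uniquely, it suffices to check the formal identities $W_{0}=\imatrix$, $W_{1}=\xmpar$ and $W_{t}=\xmpar W_{t{-}1}-\ympar W_{t{-}2}$ for $t\in\Z_{\geq 2}$; specialising $\ympar$ then finishes the proof.

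The base cases are immediate, as $M(0,0)=\{1\}$, $M(1,0)=\{\xmpar\}$ and all other $M(0,k)$, $M(1,k)$ are empty. For $t\in\Z_{\geq 2}$, partition the monomials occurring in $W_{t}$ according to their first letter (these monomials being nonempty), which is either $\xmpar$ or $\ympar$. Deleting the initial $\xmpar$ is a bijection from $\{m\in M(t,k)\mid m\text{ begins with }\xmpar,\ k\geq 0\}$ onto ${\textstyle\coprod_{k}}M(t{-}1,k)$ that preserves the number of $\ympar$-factors, hence the sign $(-1)^{k}$; so the monomials beginning with $\xmpar$ contribute exactly $\xmpar W_{t{-}1}$ to $W_{t}$. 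Deleting the initial $\ympar$ is a bijection from $\{m\in M(t,k)\mid m\text{ begins with }\ympar,\ k\geq 1\}$ onto ${\textstyle\coprod_{k}}M(t{-}2,k)$ that lowers the number of $\ympar$-factors by one, hence reverses the sign; so these monomials contribute $-\ympar W_{t{-}2}$. Adding the two contributions gives $W_{t}=\xmpar W_{t{-}1}-\ympar W_{t{-}2}$. Along the way one checks that the index ranges match, i.e. $\max\{s(t{-}1),\,s(t{-}2)+1\}=s(t)$, and that when $t$ is even the top index $k=s(t)=t/2$ contributes only the monomial $\ympar^{t/2}$, which indeed begins with $\ympar$.

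The argument is essentially routine; the only thing to watch is the non-commutativity. Because \eqref{eq:cpb-recursion-formula} multiplies $\imatrix-\ematrix_{\bvec}$ on the \emph{left} of $U^{\bvec}_{t{-}2}$, it is essential that the elements of $M(t,k)$ are genuine words in $\xmpar,\ympar$ (so that a word may begin with $\ympar$) and that the two bijections above strip the \emph{leftmost} letter: prepending $\xmpar$, respectively $\ympar$, is then precisely left-multiplication by $\xmpar$, respectively by $\ympar$, in the recursion. As a consistency check, setting $\ematrix_{\bvec}=0$ specialises $\ympar\mapsto\imatrix$ and collapses \eqref{eq:cpb-formula} to the scalar closed form \eqref{eq:cp-formula}, using $|M(t,k)|=\binom{t-k}{k}$; this matches the remark following \fullref{lemma:relation-polynomials} that $U^{\bvec}_{t}=U_{t}$ when $\ematrix_{\bvec}$ vanishes.
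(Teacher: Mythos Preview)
Your proof is correct and follows essentially the same approach as the paper: both verify the base cases $t=0,1$ and then check that the right-hand side of \eqref{eq:cpb-formula} satisfies the recursion \eqref{eq:cpb-recursion-formula} by partitioning the monomials according to whether the leftmost letter is $\xmpar$ or $\ympar$. Your write-up is simply more explicit about the bijections and the sign bookkeeping than the paper's one-sentence version.
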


Indeed, \eqref{eq:cpb-formula}
specializes to \eqref{eq:cp-formula} in case $\ematrix_{\bvec}$ 
is the zero matrix.

\begin{proof}
One immediately checks that $U_{0}^{\bvec}(\xmpar)$ and 
$U_{1}^{\bvec}(\xmpar)$ satisfy \eqref{eq:cpb-formula}. Analyzing formula 
\eqref{eq:cpb-formula}, one sees that the monomials 
in $U_{t}^\bvec(\xmpar)$ can be obtained by multiplying 
the ones from $U_{t{-}1}^\bvec(\xmpar)$ with $\xmpar$ from 
the left and subtracting the ones from $U_{t{-}2}^\bvec(\xmpar)$ 
multiplied with $(\imatrix-\ematrix_{\bvec})$ from the 
left, which is exactly the recursion \eqref{eq:cpb-recursion-formula}. 
Thus, the claim follows.
\end{proof}

To obtain positivity of $U_{t}^{\bvec}(\amatrix)$ we 
need a combinatorial interpretation of the summands 
in \eqref{eq:cpb-formula} in terms of certain paths, where 
we recall that e.g. $\pathxx{i}{j}{i}$ denotes a path 
in the double of $\Gg$, which we from now on identify 
with paths in $\Gg$, by convention.

\begin{definition}\label{definition:koszul-inflow}
For each vertex $\ii\notin\bvec$ choose 
an edge of $\Gg$ that is incident with $\ii$. If every edge 
of $\Gg$ is chosen by at most one vertex, then we 
call this a \textit{singleton inflow} (outside of $\bvec$).

For a fixed choice of singleton 
inflow, we call a path $\pathxx{i}{j}{i}$ a \textit{chosen zigzag
at $\ii\notin\bvec$} if the arrow 
$\pathx{j}{i}$ was the choice for the singleton inflow at the vertex $\ii$.
\end{definition}

\begin{example}\label{example:koszul-inflow}
A singleton 
inflow is not unique and might not exist at all:
a type $\typeADE$ graph possesses at least one singleton inflow 
as long as $\bvec\neq\emptyset$, but if we would allow $\bvec=\emptyset$, then
the type $\typeA[]$ graph has none at all, while a type $\atypeA[]$ graph has exactly two.
\end{example}

\begin{lemma}\label{lemma:zigzag-nooverlap}
Assume that $\Gg$ has a singleton inflow and 
let $p$ be a path in $\Gg$. Then two distinct 
chosen zigzags in $p$ cannot have any edges of the path in common.
\end{lemma}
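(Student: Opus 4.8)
The plan is to work with the edge-occurrences of the path $p$ and to invoke the single defining property of a singleton inflow, namely that no edge of $\Gg$ is selected by two distinct vertices.

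First I would fix notation. Write $p$ as a path $v_{0}\to v_{1}\to\dots\to v_{\ell}$ in $\Gg$, with edges $e_{m}=\{v_{m-1},v_{m}\}$ for $m\in\{1,\dots,\ell\}$, labelled by their position, so that $e_{m}$ and $e_{m'}$ count as the same edge of $p$ only when $m=m'$. By \fullref{definition:koszul-inflow}, a chosen zigzag in $p$ is a consecutive subpath $v_{m-1}\to v_{m}\to v_{m+1}$ with $v_{m-1}=v_{m+1}=:\ii$ (hence $\ii\notin\bvec$), $v_{m}=:\ii[j]$, such that the edge $\{\ii,\ii[j]\}$ of $\Gg$ is the one chosen by $\ii$ for the singleton inflow; it uses exactly the two edges $e_{m},e_{m+1}$ of $p$ and is determined by the index $m$, so distinct chosen zigzags in $p$ have distinct indices.

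Next I would reduce to one overlap case. If two chosen zigzags, at indices $m<m'$, have an edge of $p$ in common, then $\{m,m+1\}\cap\{m',m'+1\}\ne\emptyset$, which forces $m'=m+1$. So suppose there is a chosen zigzag at $m$ and one at $m+1$. From the one at $m$ we read $v_{m-1}=v_{m+1}=:\ii$, $v_{m}=:\ii[j]$, and that $\ii$ chose the edge $\{\ii,\ii[j]\}$. The one at $m+1$ is the subpath $v_{m}\to v_{m+1}\to v_{m+2}$ with $v_{m}=v_{m+2}$; since $v_{m}=\ii[j]$ and $v_{m+1}=\ii$, this subpath equals $\ii[j]\to\ii\to\ii[j]$, a chosen zigzag at $\ii[j]$, so $\ii[j]\notin\bvec$ and $\ii[j]$ chose the edge $\{\ii[j],\ii\}=\{\ii,\ii[j]\}$.

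Thus the single edge $\{\ii,\ii[j]\}$ of $\Gg$ would be selected for the inflow by both $\ii$ and $\ii[j]$, which are distinct since $\Gg$ is simple — contradicting that a singleton inflow assigns each edge to at most one vertex. Hence no two distinct chosen zigzags in $p$ share an edge of $p$, which proves the lemma. There is no genuine obstacle in this argument; the one point needing attention is recognizing, in the overlap case, that the two zigzags force one and the same $\Gg$-edge to be the inflow choice of two different vertices, and that is exactly what the singleton condition forbids. (The conditions involving $\bvec$ are automatic, since a chosen zigzag occurs only at a vertex outside $\bvec$.)
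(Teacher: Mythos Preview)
Your proof is correct and follows essentially the same route as the paper's: reduce to two consecutive chosen zigzags sharing one edge of the path, read off that the resulting subpath is $\ii\to\ii[j]\to\ii\to\ii[j]$, and conclude that both $\ii$ and $\ii[j]$ selected the same edge $\{\ii,\ii[j]\}$, contradicting the singleton inflow condition. The only difference is cosmetic---you index edge-occurrences explicitly while the paper argues directly on a length-$3$ subpath---so there is nothing substantive to compare.
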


\begin{proof}
We can immediately reduce this statement to a 
path of length $3$, i.e. $p=\pathxxx{i}{j}{k}{l}$ 
and assume that $\pathxx{i}{j}{k}$ and $\pathxx{j}{k}{l}$ are 
chosen zigzags. This forces $\ii=\ii[k]$ and $\ii[j]=\ii[l]$, thus 
$p=\pathxxx{i}{j}{i}{j}$ with the second and third 
edge both being chosen for the singleton inflow. 
This is a contradiction, since they are both 
the edge connecting $\ii$ and $\ii[j]$.
\end{proof}

For a fixed choice of a singleton 
inflow, we 
associate to a path  
$p=\pathxxxx{i_{1}}{i_{2}}{\dots}{i_{t{-}1}}{i_{t}}$
a monomial $m(p)$ in non-commuting variables 
$\xmpar$ and $\ympar$ by first substituting any 
chosen zigzags in $p$ by $\ympar$, and 
afterwards all remaining edges by $\xmpar$.

\begin{example}\label{example:koszul-inflow-2}
Take the type $\typeA[4]$ graph with $\bvec=\{\ii[1]\}$. Then there 
is a unique singleton inflow, which we illustrate by 
orient an edge towards a vertex in case it is the chosen one 
for that vertex, i.e.
\[
\begin{tikzcd}[ampersand replacement=\&,row sep=large,column sep=tiny,arrows={shorten >=-.5ex,shorten <=-.5ex},labels={inner sep=.05ex}]
{\color{myblue}\ii[1]}
\arrow[thick,myred,->]{r}{}
\&
{\color{myred}\ii[2]}
\arrow[thick,mypurple,->]{r}{}
\&
{\color{mypurple}\ii[3]}
\arrow[thick,mygreen,->]{r}{}
\&
{\color{mygreen}\ii[4]}
\end{tikzcd}
,\text{ for }
\bvec=\{\ii[1]\}.
\]
With this singleton inflow, for example, the path $\pathxxx{3}{2}{1}{2}$ in $\Gg$
is associated to $\xmpar\ympar$, with $\ympar$ corresponding 
to $\pathxx{2}{1}{2}$. Similarly, in case $\bvec=\{\ii[1],\ii[2]\}$, then 
a singleton inflow would be
\[
\begin{tikzcd}[ampersand replacement=\&,row sep=large,column sep=tiny,arrows={shorten >=-.5ex,shorten <=-.5ex},labels={inner sep=.05ex}]
{\color{myblue}\ii[1]}
\arrow[thick,-]{r}{}
\&
{\color{myred}\ii[2]}
\arrow[thick,mypurple,->]{r}{}
\&
{\color{mypurple}\ii[3]}
\arrow[thick,mygreen,->]{r}{}
\&
{\color{mygreen}\ii[4]}
\end{tikzcd}
,\text{ for }
\bvec=\{\ii[1],\ii[2]\},
\]
and $\pathxxx{3}{2}{1}{2}$ would be associated to 
$\xmpar^{3}$.
\end{example}

The next lemma yields a combinatorial 
interpretation of $m(\amatrix,\imatrix-\ematrix_{\bvec})$ for $m\in M(t,k)$.

\begin{lemma}\label{lemma:m-interpretation}
Let $m\in M(t,k)$ for $t\in\Z_{\geq 0}$ and 
$0\leq k\leq s(t)$. Then the $\ii$-$\ii[j]$ position of 
$m(\amatrix,\imatrix-\ematrix_{\bvec})$ is equal 
to the number of paths $p=\pathxxxx{j}{i_{1}}{\dots}{i_{t{-}1}}{i}$ 
(of length $t$) from $j$ to $i$ such that $m(p)=m$.
\end{lemma}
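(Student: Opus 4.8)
The plan is to establish the identity by a direct bijection between the nonzero terms of the matrix product defining $m(\amatrix,\imatrix-\ematrix_{\bvec})$ and the pairs consisting of a path $p$ from $\ii[j]$ to $\ii$ together with a marked set $S$ of chosen zigzags occurring in $p$; here ``$m(p)=m$'' is to be read as: $m$ is obtained from $p$ by replacing \emph{some} set of chosen zigzags occurring in $p$ by $\ympar$ and all other edges by $\xmpar$, which is legitimate for any such set since by \fullref{lemma:zigzag-nooverlap} the chosen zigzags in a path are pairwise edge-disjoint.

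First I would record the two elementary building blocks. Because $\Gg$ is simple, $\amatrix_{\ii[a]\ii[b]}\in\{0,1\}$ equals the number of single edges of $\Gg$ from $\ii[a]$ to $\ii[b]$; and because a singleton inflow assigns to each $\ii[a]\notin\bvec$ exactly one incident edge, there is a unique chosen zigzag $\pathxx{a}{b}{a}$ rooted at $\ii[a]$, so $(\imatrix-\ematrix_{\bvec})_{\ii[a]\ii[b]}\in\{0,1\}$ equals the number of chosen zigzags from $\ii[a]$ back to $\ii[a]$, namely $1$ exactly when $\ii[a]=\ii[b]\notin\bvec$. Writing $m=\mstuff{M}_{1}\mstuff{M}_{2}\cdots\mstuff{M}_{\ell}$ with $\mstuff{M}_{s}=\amatrix$ or $\mstuff{M}_{s}=\imatrix-\ematrix_{\bvec}$ according to whether the $s$-th letter of $m$ is $\xmpar$ or $\ympar$, expanding the product yields
\[
\bigl(m(\amatrix,\imatrix-\ematrix_{\bvec})\bigr)_{\ii\ii[j]}
=
\sum\;\prod_{s=1}^{\ell}\,(\mstuff{M}_{s})_{w_{s{-}1}w_{s}},
\]
the sum ranging over vertex sequences $w_{0},w_{1},\dots,w_{\ell}$ with $w_{0}$ and $w_{\ell}$ fixed (equal to $\ii$ and $\ii[j]$, in the order dictated by the convention $\pathxx{i}{j}{k}=\pathx{i}{j}\circ\pathx{j}{k}$ and \fullref{definition:koszul-algo-output}). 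By the two observations above, a nonzero summand is the same thing as a choice, for each $s$, of a single edge $w_{s{-}1}\to w_{s}$ when the $s$-th letter is $\xmpar$, or of the unique chosen zigzag at $w_{s{-}1}=w_{s}$ when it is $\ympar$.

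The core step is then the bijection. Concatenating the $\ell$ chosen blocks produces a path $p$ of length $(\#\,\xmpar)+2(\#\,\ympar)=t$ from $\ii[j]$ to $\ii$, together with the set $S$ of those blocks that are chosen zigzags; by construction, substituting $S$ by $\ympar$ and the remaining edges by $\xmpar$ returns exactly $m$, so this pair $(p,S)$ is counted by ``$m(p)=m$''. Conversely, from such a pair $(p,S)$ the positions of the letters of $m$ prescribe a decomposition of the edge-sequence of $p$ into single edges and chosen zigzags; \fullref{lemma:zigzag-nooverlap} guarantees this decomposition is well defined, and reading off its endpoints recovers a unique admissible sequence $w_{0},\dots,w_{\ell}$. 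Finally, for a fixed $p$ the set $S$ producing a prescribed $m$ is unique, since distinct sets of edge-disjoint chosen zigzags occupy distinct edge-positions and hence give distinct monomials; so the number of nonzero summands equals the number of paths $p$ from $\ii[j]$ to $\ii$ with $m(p)=m$, as claimed.

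I expect the only genuinely delicate point to be matching the left-to-right order of the letters of $m$ with the orientation of $p$ and with which matrix factor carries the index $\ii$ versus $\ii[j]$: the matrices $\amatrix$ and $\imatrix-\ematrix_{\bvec}$ are symmetric but their product is not, so this must be tracked carefully through \fullref{definition:koszul-algo-output} and the path-composition convention; everything else is routine bookkeeping. Alternatively, one can argue by induction on $t$ by peeling off the first letter of $m$ — a leading $\xmpar$ corresponding to prepending a single edge and a leading $\ympar$ to prepending a chosen zigzag — where \fullref{lemma:zigzag-nooverlap} again ensures that no chosen zigzag is created or destroyed at the splice.
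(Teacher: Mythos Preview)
Your argument is correct and is essentially the paper's bijection, expanded letter by letter rather than block by block: the paper writes $m=\xmpar^{a_{1}}\ympar^{b_{1}}\cdots$, identifies the matrix entry with the number of length $t{-}2k$ paths whose $a_{1}{+}\dots{+}a_{i}$-th vertex lies outside $\bvec$ for each $i$, and then inserts the prescribed chosen zigzags to get the length $t$ path; you do the same thing one factor $\mstuff{M}_{s}$ at a time and make the inverse map explicit.

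The one point worth noting is your reinterpretation of ``$m(p)=m$''. The paper defines $m(p)$ as the unique monomial obtained by substituting \emph{all} chosen zigzags by $\ympar$, but with that reading the lemma is false (e.g.\ for $m=\xmpar^{2}$ the entry $(\amatrix^{2})_{\ii\ii}$ counts every length $2$ loop at $\ii$, including the chosen zigzag at $\ii$, whose $m(p)$ would be $\ympar$). What the paper's proof actually establishes --- and what is used afterwards, where it is remarked that a single path can contribute to several $m$ and the contributions are then cancelled via $\sum_{l}(-1)^{l}\binom{k}{l}=0$ --- is precisely your reading: the entry counts length $t$ paths admitting \emph{some} decomposition into edges and chosen zigzags matching $m$. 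Your final uniqueness step (for fixed $p$ and $m$ the marked set $S$ is determined) is what justifies counting paths rather than pairs, and is implicit in the paper's ``any such path can be obtained uniquely in such a way''. So your write-up is in fact a cleaner formulation of the same argument.
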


\begin{proof}
Let $m=\xmpar^{a_{1}}\ympar^{b_{1}}\xmpar^{a_{2}}\ympar^{b_{2}}\dots
\xmpar^{a_{s}}\ympar^{b_{s}}$ such that the sum of all 
$a_{i}$ is $t-2k$ and the sum of all $b_{i}$ is $k$. By 
definition, the $\ii$-$\ii[j]$ position of $m(\amatrix,\imatrix-\ematrix_{\bvec})$ 
is equal to the number of paths of length $t-2k$ where the path 
made of the first $a_{1}+\dots+a_{i}$ edges ends in a vertex 
outside of $\bvec$, for all $\ii$. Such a path can be uniquely 
extended to length $t$, by adding $b_{1}$ chosen 
zigzags after the first $a_{1}$ edges, then $b_{2}$ chosen 
zigzags after the next $a_{2}$ edges, etc. The resulting 
path $p$ satisfies $m(p)=m$, by construction, and clearly 
any such path can be obtained uniquely in such a way, if $m$ is fixed.
\end{proof}

Note that a path from $\ii[j]$ to $\ii$ can contribute to 
multiple $m(\amatrix,\imatrix-\ematrix_{\bvec})$ for 
$m\in M(t,k)$ via \fullref{lemma:m-interpretation}, 
and of course all path contribute to $m(\amatrix)=\amatrix^{t}$.

\begin{proposition}\label{proposition:ADE-koszul-boundary}
If $\Gg$ is a type $\typeADE$ graph, then $\bzigzag$ is Koszul.
\end{proposition}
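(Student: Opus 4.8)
The argument is a refinement of the one for \fullref{proposition:koszul-boundary}: I resolve a simple $\bzigzag$-module by iterating projective covers and check that the resolution is linear. The only new point for a type $\typeADE$ graph is that $U^{\bvec}_{t}(\amatrix)$ may eventually become the zero matrix, so I must rule out the resolution getting ``stuck'' in the way illustrated in \fullref{example-koszul-finiteD}. Throughout I fix a singleton inflow on $\Gg$, which exists since $\bvec\neq\emptyset$ and $\Gg$ is of type $\typeADE$, cf. \fullref{example:koszul-inflow}.

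\emph{First, a positivity formula.} Inserting the closed form from \fullref{lemma:koszul-boundary-poly} into the path count of \fullref{lemma:m-interpretation}, the $\ii$-$\ii[j]$ entry of $U^{\bvec}_{t}(\amatrix)$ equals $\sum_{p}\sum_{S}(-1)^{|S|}$, where $p$ runs over the length-$t$ paths in $\Gg$ from $\ii[j]$ to $\ii$ and $S$ over subsets of the chosen zigzags occurring in $p$; by \fullref{lemma:zigzag-nooverlap} the chosen zigzags in $p$ are pairwise edge-disjoint, so every such $S$ really produces a monomial of the shape demanded by \fullref{lemma:koszul-boundary-poly}. For a fixed $p$ that contains at least one chosen zigzag, toggling the membership of a distinguished chosen zigzag in $S$ is a sign-reversing involution, so those contributions cancel and one obtains
\[
U^{\bvec}_{t}(\amatrix)_{\ii\ii[j]}=\#\{\text{length-}t\text{ paths }\ii[j]\to\ii\text{ containing no chosen zigzag}\}\in\Z_{\geq 0}.
\]
In particular $U^{\bvec}_{t}(\amatrix)$ is a non-negative integral matrix, which is exactly what makes the recursion \eqref{eq:matrixcp-recursion} compute genuine multiplicity vectors, as in the proof of \fullref{proposition:koszul-abstract}.

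\emph{Next, a vanishing implication.} I claim that if $U^{\bvec}_{t}(\amatrix)\underline{a}_{0}=0$ for the indicator vector $\underline{a}_{0}$ of a vertex $\ii$, then also $(\imatrix-\ematrix_{\bvec})U^{\bvec}_{t-1}(\amatrix)\underline{a}_{0}=0$. Otherwise, by the formula above there is a chosen-zigzag-free path $p$ of length $t-1$ from $\ii$ to some vertex $\ii[k]\notin\bvec$; extend $p$ by the edge that the singleton inflow chooses at $\ii[k]$. A newly created chosen zigzag would have to use this final edge, and as the return edge of a chosen zigzag at its other endpoint $\ii[k']$ it would need to be the edge chosen at $\ii[k']$ -- impossible, since that same edge is already the one chosen at $\ii[k]$ and each edge is chosen by at most one vertex, cf. \fullref{definition:koszul-inflow}. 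So the extension is a chosen-zigzag-free path of length $t$ starting at $\ii$, contradicting $U^{\bvec}_{t}(\amatrix)\underline{a}_{0}=0$.

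\emph{Finally, assembling the resolution.} Up to a shift it suffices to resolve a simple $\lsimple$ concentrated in degree $0$. Form the syzygies $\algstuff{K}_{t}^{\bvec}$ as in the construction preceding \fullref{proposition:koszul-boundary} (iterated kernels of projective covers); then $\underline{a}_{t}=U^{\bvec}_{t}(\amatrix)\underline{a}_{0}$ and $\underline{b}_{t}=(\imatrix-\ematrix_{\bvec})U^{\bvec}_{t-1}(\amatrix)\underline{a}_{0}$ by \eqref{eq:matrixcp-recursion}. As long as $\underline{a}_{t}\neq 0$, the module $\algstuff{K}_{t}^{\bvec}$ is generated in degree $t$ with radical filtration equal to its grading filtration, its projective cover $\algstuff{Q}(\algstuff{K}_{t}^{\bvec})$ is a direct sum of copies of the $\lproj[j]$ with multiplicities $\underline{a}_{t}$, and the step $\algstuff{K}_{t+1}^{\bvec}\hookrightarrow\algstuff{Q}(\algstuff{K}_{t}^{\bvec})\twoheadrightarrow\algstuff{K}_{t}^{\bvec}$ is the $t^{\mathrm{th}}$ stage of a linear projective resolution. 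If $\underline{a}_{t}=0$ at some point, the previous paragraph gives $\underline{b}_{t}=0$ as well, hence $\algstuff{K}_{t}^{\bvec}=0$ and the resolution is finite -- in particular still linear. Either way $\lsimple$ has a linear projective resolution, so $\bzigzag$ is Koszul. I expect the substantive points to be the involution in the positivity step and the no-new-zigzag check in the vanishing step; everything else is the syzygy bookkeeping already set up before the statement.
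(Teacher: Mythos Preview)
Your proof is correct. The positivity step is essentially the paper's Claim 1 recast as a sign-reversing involution rather than the binomial identity $\sum_{l}(-1)^{l}\binom{k}{l}=0$; these are the same computation.

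Where you genuinely diverge from the paper is in handling the possibility $U^{\bvec}_{t}(\amatrix)\underline{a}_{0}=0$. The paper proceeds by case analysis: it proves (Claim 2) that $U^{\bvec}_{t}(\amatrix)$ can only vanish when $\Gg$ is of type $\typeA$ with $\bvec$ a single leaf, treats all other $\typeADE$ cases by the non-vanishing argument from \fullref{proposition:koszul-boundary}, and then for the exceptional type $\typeA$ case observes directly that the last surviving entry of each column sits in the row indexed by the leaf vertex in $\bvec$, so that $(\imatrix-\ematrix_{\bvec})\underline{a}_{t}=0$ at the moment of termination. Your path-extension argument replaces all of this with a single uniform implication $\underline{a}_{t}=0\Rightarrow\underline{b}_{t}=0$: extending a chosen-zigzag-free path of length $t-1$ ending outside $\bvec$ along the edge chosen there cannot create a new chosen zigzag, because the only candidate would require that edge to be chosen at its other endpoint as well, violating the singleton-inflow condition. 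This is cleaner and type-independent. What you lose relative to the paper is the explicit identification of which $(\Gg,\bvec)$ yield finite resolutions and their global dimension (cf. \fullref{remark:global-dimension}), though that could be recovered from your combinatorial description by asking when every sufficiently long path must contain a chosen zigzag.
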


We stress that the proof will use the fact that $\bvec\neq\emptyset$.

\begin{proof}
We first fix a choice of a singleton inflow, 
which exists by \fullref{example:koszul-inflow}, and we use the description 
of $U_{t}^{\bvec}(\amatrix)$ from \eqref{eq:cpb-formula}. 
Via this and \fullref{lemma:m-interpretation} the 
$\ii$-$\ii[j]$ position of $U_{t}^{\bvec}(\amatrix)$ is an alternating 
sum of numbers of certain paths of length $t$ from 
$\ii[j]$ to $\ii$ starting with the total 
number of paths of length $t$ for $k=0$.

We start with the following claims.
\medskip

\noindent\textit{Claim 1.} $U_{t}^{\bvec}(\amatrix)$ 
has only non-negative entries.
\medskip

\noindent\textit{Proof of the claim.}
The strategy is to show that for each path of length 
$t$ the contributions for each $k$ in the sum now either 
give $0$ or $1$. For this purpose, fix a path $p$ of 
length $t$. Now let $k$ be such that $m(p)\in M(t,k)$. 

We first claim that $p$ does not give a contribution 
to any other $m\in M(t,l)$ for $l\geq k$. This is 
clear for $l>k$, since $k$ is exactly the number of 
chosen zigzags in $p$. For $l=k$ the chosen zigzags 
would need to be at different positions, which 
is also not possible. 

Next, we claim that for $l<k$ there are exactly 
$\binom{k}{l}$ different $m\in M(t,l)$ that count
$p$. These $m$ are obtained from $m(p)$ by replacing $k-l$ of the 
occurring $\ympar$ by $\xmpar^{2}$.

Summing all of this up, we see that for $k>0$
the contribution of $p$ to the $\ii$-$\ii[j]$ position of 
$U_{t}^{\bvec}(\amatrix)$ is 
$1-\binom{k}{1}+\binom{k}{2}-\ldots+(-1)^{k}\binom{k}{k}=0$, 
while for $k=0$ the contribution is $1$. 
Thus, in total, we obtain that $U_{t}^{\bvec}(\amatrix)$ 
only has non-negative entries, which proves Claim 1.
\medskip

\noindent\textit{Claim 2.} $U_{t}^{\bvec}(\amatrix)=0$ 
if and only if $\Gg$ is a type $\typeA[]$ graph and $\bvec\neq\{\ii[1]\}$, 
respectively $\bvec\neq\{\ii[n]\}$.
\medskip

\noindent\textit{Proof of the claim.}
Assume that $\Gg$ has a trivalent vertex. Then, for 
any vertex $\ii$, we can construct a path of arbitrary 
length that contains no chosen zigzag for any choice 
of a singleton inflow starting at that vertex. This is 
due to the fact that the trivalent vertex has two edges 
that are not chosen for it, which one can use to construct such 
paths.

Assume $\Gg$ is of type $\typeA[n]$ and $\bvec\neq\{\ii[1]\}$, 
respectively $\bvec\neq\{\ii[n]\}$. Then for any singleton 
inflow choice there exists at least one of the 
following cases:
Either there exists a vertex with a vertex 
condition which has two 
neighbors, again allowing to construct an arbitrary 
long path without chosen zigzags. Or both leafs have a 
vertex condition in which 
case the edge incident to one of them is not chosen,
and we can construct again an 
arbitrary long path not containing a chosen zigzags.

Finally, we consider the case of a type $\typeA[n]$ graph with $\bvec=\{\ii[1]\}$, 
the case $\bvec=\{\ii[n]\}$ follows by symmetry. In this case 
there is a unique singleton inflow. For this 
singleton inflow there can not be any path of length $\geq 2n-1$ 
not containing any chosen zigzag. Hence, $U_{t}^{\bvec}(\amatrix)=0$ 
for $t\geq 2n-1$, which shows the claim.

Altogether this implies that outside of graphs of type $\typeA[n]$ 
with $\bvec=\{\ii[1]\}$, respectively $\bvec=\{\ii[n]\}$, $\bzigzag$ is 
Koszul by the same argument as for \fullref{proposition:koszul-boundary}.

The hardest case remains: a type $\typeA[n]$ graph with 
$\bvec=\{\ii[1]\}$, because the case $\bvec=\{\ii[n]\}$ 
follows again by symmetry. Observe that there 
is a unique longest path not containing any chosen zigzag, which is 
of the form $\pathxxxx{i}{\dots}{n}{\dots}{1}$, and which is of length 
$(2n+1)-i$. Thus, the matrix $U_{(2n+1)-i}^{\bvec}(\amatrix)$ 
contains only one non-zero entry in the $i^{\mathrm{th}}$ column 
which is located in the 
$\ii[1]$-$\ii[i]$ position, 
which shows that this case is also Koszul.
\end{proof}

\makeautorefname{proposition}{Propositions}

\begin{remark}\label{remark:global-dimension}
From the proofs of \fullref{proposition:koszul-boundary} 
and \ref{proposition:ADE-koszul-boundary} we obtain 
the length of a minimal projective resolution for all 
simple modules, which are given by Chebyshev polynomials.
This shows that
$\bzigzag$ has infinite global dimension 
unless $\Gg$ is a type $\typeA[n]$ graph with 
$\bvec = \{\ii[1]\}$, respectively $\bvec=\{\ii[n]\}$, 
in which case it has global dimension $2n-1$. 
Note that this is known for the type $\typeA[n]$ graph 
with the vertex condition imposed on one leaf due to its connection 
to category $\mathcal{O}$, but in general this appears to be a new observation.
\end{remark}

\makeautorefname{proposition}{Proposition}

\begin{example}\label{example:koszul-finite}
If $\Gg$ is a type $\typeA[]$ graph with $\bvec=\{\ii[1]\}$ 
or $\bvec=\{\ii[n]\}$, then the linear projective resolutions are all finite. 
This follows from \fullref{proposition:ADE-koszul-boundary} 
and \fullref{theorem:qhb}.
To see this explicitly (we do the case $\bvec=\{\ii[1]\}$, the other follows by symmetry), 
we observe that the columns of 
the matrices (exemplified in case of the type $\typeA[4]$ graph)
\[
\xy
(0,0)*{
\scalebox{.8}{$\displaystyle
\left(
\begin{array}{cccc}
1 & 0 & 1 & 0
\\
0 & 1 & 0 & 1
\\
1 & 0 & 1 & 0
\\
0 & 1 & 0 & 0
\end{array}
\right)
$}};
(0,-10)*{\text{$t=2$}};
\endxy
,\;
\xy
(0,0)*{
\scalebox{.8}{$\displaystyle
\left(
\begin{array}{cccc}
0 & 1 & 0 & 1
\\
1 & 0 & 1 & 0
\\
0 & 1 & 0 & 0
\\
1 & 0 & 0 & 0
\end{array}
\right)
$}};
(0,-10)*{\text{$t=3$}};
\endxy
,\;
\xy
(0,0)*{
\scalebox{.8}{$\displaystyle
\left(
\begin{array}{cccc}
1 & 0 & 1 & 0
\\
0 & 1 & 0 & 0
\\
1 & 0 & 0 & 0
\\
0 & 0 & 0 & 0
\end{array}
\right)
$}};
(0,-10)*{\text{$t=4$}};
\endxy
,\;
\xy
(0,0)*{
\scalebox{.8}{$\displaystyle
\left(
\begin{array}{cccc}
0 & 1 & 0 & 0
\\
1 & 0 & 0 & 0
\\
0 & 0 & 0 & 0
\\
0 & 0 & 0 & 0
\end{array}
\right)
$}};
(0,-10)*{\text{$t=5$}};
\endxy
,\;
\xy
(0,0)*{
\scalebox{.8}{$\displaystyle
\left(
\begin{array}{cccc}
1 & 0 & 0 & 0
\\
0 & 0 & 0 & 0
\\
0 & 0 & 0 & 0
\\
0 & 0 & 0 & 0
\end{array}
\right)
$}};
(0,-10)*{\text{$t=6$}};
\endxy
,\;
\xy
(0,0)*{
\scalebox{.8}{$\displaystyle
\left(
\begin{array}{cccc}
0 & 0 & 0 & 0
\\
0 & 0 & 0 & 0
\\
0 & 0 & 0 & 0
\\
0 & 0 & 0 & 0
\end{array}
\right)
$}};
(0,-10)*{\text{$t\geq 7$}};
\endxy,
\]
together with the starting matrices $\imatrix$ for $t=0$ and $\amatrix$ for $t=1$,
give the summands in the corresponding linear projective resolutions. 
In fact, having these matrices it is easy to write down the resolutions 
using the same methods as in \fullref{example-koszul-finiteD}.
\end{example}

\subsubsection{The proof}\label{subsubsec:koszul-proof-boundary}

\makeautorefname{proposition}{Propositions}

\begin{proof}[Proof of \fullref{theorem:koszulb}]
We combine \fullref{proposition:koszul-boundary} and \ref{proposition:ADE-koszul-boundary}.
\end{proof}

\makeautorefname{proposition}{Proposition}
%

\end{document}